\numberwithin{equation}{section}
\theoremstyle{definition} % Roman
\newtheorem{definition}{Definition}[]
\newtheorem{rem}{Remark}[section]
\theoremstyle{plain} % Italic
\newtheorem{thm}{Theorem}[section] 
\newtheorem{lem}[thm]{Lemma}%[subsection]
\newtheorem{cor}[thm]{Corollary}%[subsection]
\newtheorem{prop}[thm]{Proposition}%[subsection]
\newcommand{\Z}{\mathbb {Z}}
\newcommand{\R}{\mathbb{R}}
\newcommand{\supp}{\rm{supp}}
\def\div{ \hbox{\rm div}\,  }
\def\supp{\, \hbox{\rm supp}\,  }
\def\id{\hbox{\rm Id}}
\def\adj{\hbox{\rm adj}}
\def\tr{\hbox{\rm tr}}
\def\divA{\, \hbox{\rm div}_A\,  }
\def\divx{\, \hbox{\rm div}_x\,  }
\def\divy{\, \hbox{\rm div}_y\,  }
\def\da{\delta\!a}
\def\dK{\delta\!K}
\def\du{\delta\!u}
\def\dv{\delta\!v}
\def\dpsi{\delta\!\psi}
\newcommand{\Sum}{\displaystyle \sum}
\newcommand{\Frac}{\displaystyle \frac}
\def\cM{{\mathcal M}}
\def\cC{{\mathcal C}}
\def\cS{{\mathcal S}}
\let\tilde=\widetilde
\begin{document}

\title[Well-posedness of full compressible Navier-Stokes system]{On the well-posedness of the  full compressible Navier-Stokes system in critical Besov spaces}
\date{}

\author[N. Chikami]{Noboru Chikami}
\address[N. Chikami]
{Mathematical Institute
    Tohoku University, Sendai 980-8578, Japan.}
\email{sb0m23@math.tohoku.ac.jp}

\author[R. Danchin]{Rapha\"el Danchin}
\address[R. Danchin]
{Universit\'e Paris-Est, LAMA, UMR 8050 and Institut Universitaire de France,
 61 avenue du G\'en\'eral de Gaulle,
94010 Cr\'eteil Cedex, France.}
\email{danchin@univ-paris12.fr}

\begin{abstract} We are concerned with the Cauchy problem of the full compressible Navier-Stokes
equations satisfied by viscous and heat conducting fluids in $\R^n.$ We focus on the so-called critical 
Besov regularity framework. In this setting, it is natural to consider initial densities $\rho_0,$ velocity fields $u_0$ 
and temperatures $\theta_0$ with $a_0:=\rho_0-1\in\dot B^{\frac np}_{p,1},$ $u_0\in\dot B^{\frac np-1}_{p,1}$
and $\theta_0\in\dot B^{\frac np-2}_{p,1}.$ 
After recasting the whole system in Lagrangian coordinates, and working   with the \emph{total  energy along the flow}  
rather than with the temperature, we discover that the system may be solved by means
of Banach fixed point theorem in a critical functional framework whenever the space dimension
is $n\geq2,$ and $1<p<2n.$ Back to Eulerian coordinates, this allows to improve the range of $p$'s for which the system is locally well-posed, compared to \cite{danchin1}. 
%In particular \emph{negative} regularity exponents may be achieved both for the velocity and the temperature. 
\end{abstract}

\maketitle

\section{Introduction}
We consider the Cauchy problem of 
the following full compressible 
Navier-Stokes equations 
in $\mathbb{R}^n$, $n\geq 2$:
\begin{equation}\label{CNS}\left\{
\begin{array}{lr}
\partial_t \rho + \div(\rho u) = 0, 
   &\quad (t,x) \in \R_+ \times \R^n, \\[1ex]
\partial_t (\rho u) + \div(\rho u\otimes u) + \nabla P
    =  \div\tau, 
   &\quad (t,x) \in \R_+ \times \R^n, \\[1ex]
 \partial_t \Big[\rho\Big(\Frac{|u|^2}{2}+e\Big)\Big] 
  + \div\Big[u\Big(\rho\big(\Frac{|u|^2}{2}+e\big)+P\Big)\Big] &\\
\qquad\qquad\qquad\qquad\qquad\qquad\qquad
  = \div(\tau\cdot u) - \div q, &\quad (t,x) \in \R_+ \times \R^n,\\[1ex]
(\rho,u, \theta)|_{t=0} = (\rho_0,u_0,\theta_0), &\quad x\in \R^n,
\end{array}
\right.
\end{equation} 
where $\rho = \rho(t,x)\in\R_+$, $u=u(t,x)\in\R^n$ and $e=e(t,x)\in\R$ 
are the unknown functions, representing the fluid density, 
the velocity vector field and the internal energy per unit mass, respectively. 
%The function $f=f(t,x)$ is given as an external force.
%The first equation represents the mass conservation law, 
%the second one represents the equilibrium of momentum, 
We restrict ourselves to the case of Newtonian gases, namely we assume the viscous stress tensor $\tau$ to be given by
\begin{equation*}
\tau := \lambda \div u \ \id + 2\mu D(u), 
\end{equation*}
where $D(u)$ designates the deformation tensor 
defined by 
\begin{equation*}
D(u):=\frac{1}{2}(Du+\nabla u) \quad \text{with} \quad
(Du)_{ij}:=\partial_j u^i \quad \text{and} \quad
(\nabla u)_{ij} := (^t\!(Du))_{ij} = \partial_i u^j.
\end{equation*}
%We assume that the physical coefficients 
%are independent of $\theta$. (otherwise we need two more derivatives on 
%$\theta$ to control  $\|\theta\|_{L^\infty_T(L^\infty)}$)
The viscosity coefficients $\lambda=\lambda(\rho)$ and $\mu=\mu(\rho)$ are given smooth functions of $\rho$ satisfying 
$\mu>0$ and $\lambda + 2 \mu>0$, which ensures the ellipticity of the second order
operator in the velocity equation. 
%which ensures that the operator 
%$\divv(2\mu(\rho) \mathcal{D}\cdot)+\nabla (\lambda(\rho) \divv \cdot)$
%is elliptic.
We assume the Fourier law; that is the heat conduction $q$ is given by $q=-k\nabla \theta$ where 
$k=k(\rho)$ is a given positive smooth function and $\theta=\theta(t,x)$, the temperature.
We also suppose that the gas obeys 
Joule's law, namely that $e$ is a
function of $\theta$ only; for simplicity we 
assume $e=C_v \theta$ for a (positive) 
specific heat constant $C_v$. 
The given function $P$ represents 
the pressure depending on $\rho$ and $\theta$. In that paper, we restrict ourselves to the following pressure law: 
\begin{equation*}
P(\rho,\theta):=\pi_0(\rho)+\theta\pi_1(\rho),
\end{equation*}
where $\pi_0$ and $\pi_1$ are given smooth functions. 

Important examples of such pressure laws are ideal fluids (for which $\pi_0(\rho)=0$ 
and $\pi_1(\rho)=R\rho$ for some positive constant $R$), 
 barotropic gases ($\pi_1(\rho)=0$) and  Van der Waal gases 
($\pi_0(\rho)=-\alpha\rho^2$ 
and $\pi_1(\rho)= \beta\rho/(\gamma-\rho)$ 
for some positive constant $\alpha$, $\beta,$ $\gamma$).
\medbreak
The boundary conditions at infinity are that $u$ and $\theta$ tend to $0,$
and that $\rho$ tends to some positive constant $\rho^*.$ The exact meaning of the convergence
will follow from the functional  framework we shall work in. 
For simplicity, we assume $C_v=1$ and 
$\rho^*=1$ in all that  follows. With no loss of generality, one can impose in addition that  $\pi_0(1)=0.$

%%%%%%%%%%%%%%%%%

\subsection{Aim of the paper}

Our main goal is to solve the full Navier-Stokes equations 
in the  so-called \emph{critical regularity framework}. 
This approach originates from a paper  of Fujita-Kato \cite{fujita-kato} devoted 
to the well-posedness issue for the incompressible Navier-Stokes equations. 
In our context, the idea is to solve \eqref{CNS} in a functional space having 
the same invariance by time and space dilations as \eqref{CNS}, namely
$(\rho,u,\theta)\to (\rho_{\nu},u_{\nu},\theta_{\nu})$ with
\begin{equation}
\label{eq:critical}
\rho_{\nu} (t,x)=\rho(\nu^2 t,\nu x), \quad
u_{\nu} (t,x)=\nu u(\nu^2 t,\nu x) 
\ \text{ and }\ 
\theta_{\nu} (t,x)=\nu^2 \theta(\nu^2 t,\nu x).
\end{equation}
The above family of transforms does not quite leave \eqref{CNS} invariant
(as $P$ has to be changed into $\nu^2 P$). Nevertheless, the pressure term is, to some extent,  lower order,
and it is thus suitable to address the  solvability issue of  the system in `critical'  spaces, that is in spaces
with  norm invariant for all $\nu>0$  by the scaling transformation 
$(\overline\rho,\overline u,\overline K)
\to (\overline\rho_{\nu},\overline u_{\nu},\overline K_{\nu}).$
\medbreak

Following recent works dedicated to this issue (see e.g. \cites{danchin00,danchin1}),
we here employ homogeneous Besov spaces \emph{with summation index $1$}.
The main reasons why are  that those spaces have nice embedding properties that fail to be true in e.g. Sobolev spaces, 
and are particularly well adapted to the study of systems related to the heat equation (which is the case here for the
velocity and energy equations) as they allow to gain two full derivatives with respect to the data, after taking a $L^1$ norm
in time (see Section \ref{s:parabolic} below). 
\medbreak
Before giving more insight on our main result, let us  recall the definition of Besov spaces with last index $1.$ 
Hereafter, we denote by $L^p\  (1\le p\le\infty)$ standard Lebesgue spaces on 
$\R^n$, and by $\ell^p$ the set of sequences with summable $p$-th powers.  
Let $\{ \phi_j \}_{j\in \Z}$ be a Littlewood-Paley 
dyadic decomposition.
Namely, let $\phi \in \mathcal{S}$ be a non-negative radially 
symmetric function that satisfies
$$\displaylines{\supp \widehat{\phi} \subset \{ \xi \in \R^n; 2^{-1} < |\xi |< 2\}, \cr
  \widehat{\phi_j} 
    :=\widehat{\phi} (2^{-j}\xi) \ (^\forall j\in \Z)
\quad \text{and} \quad 
     \sum_{j\in \Z} \widehat{\phi_j}(\xi) = 1 
 \ (^\forall \xi\neq 0).}
 $$
We further set 
$\displaystyle \widehat{\Phi}(\xi):= 1 - \sum_{j \ge 1} \widehat{\phi_j} (\xi)$
and $\dot S_mu:=\Phi(2^{-m}\cdot)\ast u,$ for $m\in\Z.$

\begin{definition}[Homogeneous Besov spaces]
Let $\mathcal{S}'(\R^n)$ be the space of tempered distributions on $\R^n.$ 
For  $1\le p\le \infty$ and $s\leq n/p,$ we denote by $\dot B^s_{p,1}(\R^n)$ 
(or more simply $\dot B^s_{p,1}$) the space of tempered distributions $u$ 
so that \footnote{See e.g \cite{bahouri-chemin-danchin} 
or \cite{triebel} for more details on the Besov spaces.}
$$
u=\sum_{j\in\Z}\dot\Delta_ju\quad\hbox{in}\quad\cS'(\R^n)\quad\hbox{with }\ \dot\Delta_ju:=  \phi_j \ast u
$$
and 
$$
\|u \|_{\dot{B}^{s}_{p,1}} :=\sum_{j\in\Z} 2^{js} \| \dot\Delta_ju\|_{L^p}<\infty.
$$
\end{definition}
\noindent
In this framework, it is clear that data $\rho_0=1+a_0,$ $u_0$ and $\theta_0$  corresponding to the scaling invariance
\eqref{eq:critical} have to be taken as follows:
$$
a_0 \in \dot B^{\frac{n}{p}}_{p,1},\quad u_0\in\dot B^{\frac{n}{p}-1}_{p,1}
\ \text{ and } \ \theta_0\in\dot B^{\frac{n}{p}-2}_{p,1}.
$$
Let us recall that in the barotropic case,  the critical Besov regularity was first considered by 
the latter author in a $L^2$ type  framework to obtain a global solution \cite{danchin00} for small perturbations
of a stable constant state $(\rho^*,0)$ with $\rho^*>0.$ 
Since then, there have been a number of refinements as regards admissible exponents for the global existence 
(see \cites{charve-danchin,chen-miao-zhang10} and the 
references therein). 
The local-in-time existence issue in the critical regularity framework with both large $u_0$ and $a_0$ (with 
$\rho_0$ bounded away from $0$)   has been addressed only in the barotropic case. 
The proof either involves the time-weighted norm  or 
the frequency localization techniques (see \cites{chen-miao-zhang1,danchin07} 
and \cite{haspot} for their generalization). The slightly nonhomogeneous  case (density  close to some constant) 
is easier and has been investigated for the full Navier-Stokes equations as well in \cite{danchin1}. 
\medbreak
When solving  \eqref{CNS} or its barotropic version bluntly, the main difficulty   is 
that the system is only partially parabolic, owing to the mass conservation equation which 
is of hyperbolic type. This  precludes any attempt to use the Banach fixed point theorem in a suitable space.
 As a matter of fact,  existence may be proved either through compactness
methods, or through a high norm uniform bounds / low norm stability estimates  scheme, as in the case of 
quasilinear symmetric hyperbolic systems. Another drawback of this direct approach is that the  loss of regularity in the stability estimates considerably restricts the set of data for which uniqueness may be proved (see  Chap. 10 of \cite{bahouri-chemin-danchin} for more details).  
\medbreak
Prompted by the recent paper 
dedicated to the compressible barotropic flow \cites{danchin2}
or by the work in \cites{danchin-mucha} concerning incompressible inhomogeneous fluids,
 we here  aim at solving the full compressible system 
\eqref{CNS} in the {\sl Lagrangian} coordinates. 
Let us emphasize that this  approach has already been successfully applied 
in the case of smooth data (see e.g. \cites{mucha, nash, valli, valli-zajaczkowski}).
We here want to perform it \emph{in the critical regularity framework}.

The motivation behind introducing  Lagrangian coordinates 
is  to effectively  eliminate the hyperbolic part of the system, 
given that the density equation becomes explicitly solvable 
once the flow of the velocity field has been  determined. 
At the same time, the system  for the velocity and energy in Lagrangian coordinates
remains of parabolic type (at least for small enough time), 
and  the Banach fixed point theorem turns out to be  applicable  for obtaining 
the existence \emph{and} uniqueness of the solution in {\sl the same class of spaces} as in the Eulerian framework. 
This is the key to improving the set of data leading to well-posedness, compared to  \cite{danchin1}.

%%%%%%%%%%%%%%%%%%%%%%%%%%%

\subsection{Notation}
Before introducing the Lagrangian system, 
let us  list some notational conventions. 
Throughout the paper, we denote by $C$ a generic harmless `constant' the value of which may vary from line to line. 
The notation $A\lesssim B$ means that $A\le C B$. 
For a $\cC^1$ function $F:\R^n\rightarrow\R^n\times\R^m$, 
we define $\div F : \R^n\rightarrow\R^m$ by 
\begin{equation*}
(\div F)^j:=\sum_{i} \partial_i F_{ij}, \quad 1\le j\le m.
\end{equation*}
For $n\times n$ matrices $A=(A_{ij})_{1\le i,j\le n}$ 
and $B=(B_{ij})_{1\le i,j\le n}$, 
we define the trace product $A:B$ by
\begin{equation*}
A:B = \tr AB = \sum_{ij} A_{ij} B_{ji}.
\end{equation*}
We denote by $\adj(A)$  the adjugate matrix of $A$,  i.e. the transpose of the cofactor matrix of $A$. 
%If $A$ is invertible then $\adj(A)=(\det A) A^{-1}$.
Given some matrix $A$, we define the 
``twisted" deformation tensor and divergence operator 
(acting on vector fields $z$) by the formulae 
\begin{equation*}
D_A(z):=\frac{1}{2}(Dz \cdot A + {}^t\!A\cdot\nabla z),
\end{equation*}
\begin{equation*}
\divA z:= ^t\!A:\nabla z = Dz:A.
\end{equation*}
The flow $X_u$ of the time dependent vector field $u$ is (formally) defined as the solution to 
\begin{equation}
\label{def-flow}
X_u(t,y)=y+\int_0^t u(\tau,X_u(\tau,y)) \,d\tau.
\end{equation}

%%%%%%%%%%%%%%%%%%%%%%%%%
 We denote by $E$ the {\sl total energy by unit volume} of the fluid, that is, remembering that 
 $e=C_v\theta$ and that $C_v=1,$
 \begin{equation}
\label{def-E}
E := \rho\Bigl(\frac{|u|^2}{2}+e\Bigr)= \rho\Bigl(\frac{|u|^2}{2}+\theta\bigr)\cdotp 
\end{equation}
With the new set of unknowns $(\rho,u,E)$, 
the system \eqref{CNS} is converted to
\begin{equation}
\label{CNSE}
\left\{
\begin{array}{l}
\partial_t \rho + \div(\rho u) = 0,\\[1ex]
\partial_t(\rho u) - \div\tau + \div (\rho u \otimes u) + \nabla P
    = 0 , \\[1ex]
\partial_t E + \div (uE) 
   - \div\Big[k(\rho) \nabla \Big(\Frac{E}{\rho} \Big)\\\qquad\qquad
   +\tau\cdot u -k(\rho) \nabla \Big(\Frac{|u|^2}{2}\Big)
   - u \pi_0(\rho) -u\Big(\Frac{E}{\rho} - \Frac{|u|^2}{2}\Big) \pi_1(\rho) \Big] =0.
\end{array}
\right.
\end{equation}

%%%%%%%%%%%%%%%%%%%%%%%%%%%

\subsection{Lagrangian coordinates}
Let  $\bar\rho(t,y):=\rho(t,X_u(t,y))$, 
$\bar u(t,y):=u(t,X_u(t,y))$
and $\bar E(t,y):=E(t,X_u(t,y))$ denote the density, velocity and energy functions in Lagrangian coordinates.
Setting $J=J_u:=\det(DX_u)$ and $A=A_u:=(DX_u)^{-1}$,  it is shown in Appendix that System \eqref{CNSE} recasts in 
\begin{equation}\label{CNSE-l}
\left\{\begin{array}{l}
\partial_t (J\overline\rho) = 0, \\[2ex]
\rho_0 \partial_t \overline u 
 -\div\Big(    \adj(DX)(2\mu(\overline\rho) D_A \overline u 
     + \lambda(\overline\rho)\divA \overline u 
     - P(\overline\rho,\overline E)\id\big) \Big)= 0, \\[2ex]
\partial_t(J\overline E)
 -\div\!\Big(\adj(DX) \big(k(\overline\rho) {}^t\!A \ \nabla (\frac{\overline E}{\overline\rho})
 +\overline\tau \cdot \overline u
 -k(\overline\rho) {}^t\!A \ \nabla (\frac{|\overline u|^2}{2})-\overline u\overline P(\overline\rho,\overline E)=0, \\[2ex]
(\overline\rho,\overline u,\overline E)|_{t=0} = (\rho_0,u_0,E_0).
\end{array}
\right.
\end{equation} 
Looking at the energy equation, it is thus natural to introduce the
 {\sl total energy along the flow} defined by
\begin{equation}
\label{def-K}
\overline K:=J\overline E=\rho_0(\bar \theta+\frac{|\bar u|^2}{2}).
\end{equation}
We shall thus eventually consider the following system 
\begin{equation}
\label{CNS-l}
\left\{
\begin{array}{l}
\partial_t (J\overline\rho) = 0, \\[2ex]
\rho_0 \partial_t \overline u 
 -\div\Big[
    \adj(DX)(2\mu(\overline\rho) D_A \bar u 
     + \lambda(\overline\rho)\divA \bar u - \overline P(\overline\rho,\overline K)\id)
    \Big]= 0, \\[2ex]
\partial_t\overline K
 -\div\Big[\adj(DX) \big(k(\bar\rho) ^t\!A \nabla(\frac{\overline K}{\rho_0}) 
 -k(\bar\rho)^t\!A \nabla (\frac{|\bar u|^2}{2}) 
 +\bar\tau \cdot \bar u 
 -\bar u \overline P(\overline\rho,\overline K)
 \big) \Big]=0, \\[2ex]
(\overline\rho,\overline u,\overline K)|_{t=0} = (\rho_0,u_0,K_0),
\end{array}
\right.
\end{equation} 
where we have redefined the initial data $K_0$ as
\begin{equation}
\label{def-K0}
K_0 := E_0 = \rho_0 \Big(\theta_0+\frac{|u_0|^2}{2}\Big),
\end{equation}
and the pressure function $\overline P$ as 
\begin{equation}\nonumber
\overline P(\overline\rho,\overline K) 
:= \pi_0(\bar\rho)
 + \Bigl(\frac{\overline K}{\rho_0} - \frac{|\bar u|^2}{2}\Bigr) \pi_1(\bar\rho).
\end{equation}

Let us finally emphasize that one may forget any reference 
to the initial Eulerian vector-field $u$ by 
defining directly the ``flow" X of $\overline u$ by 
the formula 
\begin{equation}
\label{def-flow2}
X(t,y)=y+\int_0^t \overline u(\tau,y) \,d\tau.
\end{equation}

%%%%%%%%%%%%%%%%%%%%%%%%%%%%%%%%%%%%%%%%%%%

\subsection{Main results}
We shall obtain the existence and uniquenesss of a 
local-in-time solution $(\overline \rho, \overline u, \overline K)$
for \eqref{CNS-l}, with $\overline a:= \overline\rho-1$ 
in $\cC([0,T]:\dot B^{\frac{n}{p}}_{p,1})$ 
and $(\overline u, \overline K)$ in the space 
\begin{equation}
\label{def:space}
E_p(T):=\left\{ (v, \psi) \ \Bigg| \
\begin{array}{l}
 v\in \cC([0,T];\dot B^{\frac{n}{p}-1}_{p,1}), \ 
        \partial_t v, \nabla^2 v \in L^1(0,T ; \dot B^{\frac{n}{p}-1}_{p,1})\\[1ex] 
 \psi \in \cC([0,T];\dot B^{\frac{n}{p}-2}_{p,1}),
        \partial_t \psi, \nabla^2 \psi \in L^1(0,T ; \dot B^{\frac{n}{p}-2}_{p,1})           
\end{array}\right\}
\end{equation}
endowed with the norm
$$\|(v,\psi)\|_{E_p(T)}:=\|v\|_{L^\infty_T(\dot B^{\frac{n}{p}-1}_{p,1})} 
         + \|\partial_t v, \nabla^2 v\|_{L^1_T(\dot B^{\frac{n}{p}-1}_{p,1})} 
+\|\psi\|_{L^\infty_T(\dot B^{\frac{n}{p}-2}_{p,1})} 
         + \|\partial_t \psi, \nabla^2 \psi\|_{L^1_T(\dot B^{\frac{n}{p}-2}_{p,1})}. $$
It is easily checked that $E_p(T)$ is 
critical in the meaning of \eqref{eq:critical}. 
\medbreak
Let us now state our main result.
\begin{thm}\label{thm1}
Let $1<p<2n$ and $n\ge2$. Let $u_0$ be a vector field 
in $\dot B^{\frac{n}{p}-1}_{p,1}$ and 
$K_0,$ a real valued function in  $\dot B^{\frac{n}{p}-2}_{p,1}$. 
Assume that  $\rho_0$ satisfies 
$a_0:=(\rho_0-1)\in\dot B^{\frac{n}{p}}_{p,1}$ and
\begin{equation}
\label{cond-vacuum}
\inf_x \rho_0(x) > 0.
\end{equation}
Then System \eqref{CNS-l} admits a unique local 
solution $(\overline \rho, \overline u, \overline K)$ 
with $\overline\rho$ bounded away from zero, $\overline a:=\overline\rho-1$ 
in $\cC([0,T];\dot B^{\frac{n}{p}}_{p,1})$ 
and $(\overline u, \overline K)$ in $E_p(T)$.

Moreover, the flow map 
$(a_0,u_0,K_0) \mapsto (\overline a, \overline u, \overline K)$ 
is Lipschitz continuous from 
$\dot B^{\frac{n}{p}}_{p,1}
  \times \dot B^{\frac{n}{p}-1}_{p,1}
  \times \dot B^{\frac{n}{p}-2}_{p,1}$
to $\cC([0,T];\dot B^{\frac{n}{p}}_{p,1}) \times E_p(T)$.
\end{thm}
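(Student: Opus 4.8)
The plan is to solve \eqref{CNS-l} by a Banach fixed point argument in the complete metric space $\overline{E}_p(T)$ obtained by adjoining to $E_p(T)$ the density component $\overline{a}\in\cC([0,T];\dot B^{n/p}_{p,1})$, centered at the free solution. First I would split the unknowns as $\overline{u}=u_L+\widetilde{u}$ and $\overline{K}=K_L+\widetilde{K}$, where $(u_L,K_L)$ solves the constant-coefficient heat-type system with the correct initial data $(u_0,K_0)$ and zero source, and $(\widetilde{u},\widetilde{K})$ carries the nonlinear corrections; by the parabolic maximal-regularity estimates recalled in Section \ref{s:parabolic}, $(u_L,K_L)\in E_p(T)$ with norm controlled by $\|u_0\|_{\dot B^{n/p-1}_{p,1}}+\|K_0\|_{\dot B^{n/p-2}_{p,1}}$, and moreover $\|(u_L,K_L)\|_{E_p(T)}\to0$ as $T\to0$ after subtracting the low-frequency part, or is at least bounded by a fixed $M$. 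The density is then recovered explicitly: from $\partial_t(J\overline{\rho})=0$ we get $\overline{\rho}=\rho_0/J$ with $J=\det(DX)$ and $X(t,y)=y+\int_0^t\overline{u}\,d\tau$, so $\overline{a}$ is a smooth nonlinear functional of $\int_0^t\overline{u}$, small in $\cC_T(\dot B^{n/p}_{p,1})$ when $T$ is small because $\int_0^t\overline{u}\,d\tau$ is small in $\dot B^{n/p+1}_{p,1}$ (here $1<p<2n$ enters to keep $\dot B^{n/p}_{p,1}$ a Banach algebra stable under the required composition and product operations, and to make the remainder terms land in the right space).

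Next I would set up the fixed point map $\Phi:(\widetilde{v},\widetilde{\psi})\mapsto(\widetilde{u},\widetilde{K})$, where $(\widetilde{u},\widetilde{K})$ solves the linear system
\begin{equation}\nonumber
\rho_0\partial_t\widetilde{u}-\mu(1)\Delta\widetilde{u}-(\lambda(1)+\mu(1))\nabla\div\widetilde{u}=F(\overline{a},\overline{u},\overline{K}),\qquad
\partial_t\widetilde{K}-k(1)\Delta\widetilde{K}=G(\overline{a},\overline{u},\overline{K}),
\end{equation}
with $\overline{u}=u_L+\widetilde{v}$, $\overline{K}=K_L+\widetilde{\psi}$, $\overline{a}$ the density functional above, and $F,G$ collecting everything else: the discrepancy between the twisted operators $D_A$, $\divA$, $\adj(DX)\,{}^t\!A\nabla$ and their constant-coefficient counterparts; the variable-coefficient terms $(\mu(\overline{\rho})-\mu(1))$, etc.; the pressure terms $\overline{P}(\overline{\rho},\overline{K})$; and the genuinely nonlinear transport-energy contributions $\bar\tau\cdot\bar u$, $\nabla(|\bar u|^2/2)$, $\bar u\,\overline{P}$. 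The heart of the argument is then to estimate $F$ in $L^1_T(\dot B^{n/p-1}_{p,1})$ and $G$ in $L^1_T(\dot B^{n/p-2}_{p,1})$ using product and composition estimates in Besov spaces, together with the smallness of $X-y$ and of $\overline{a}$, so that $\Phi$ maps a suitable ball of $E_p(T)$ into itself and is a contraction there for $T$ small enough; applying the parabolic estimates of Section \ref{s:parabolic} converts these source bounds into $E_p(T)$ bounds on $(\widetilde{u},\widetilde{K})$.

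The main obstacle I expect is controlling the quadratic terms in the energy equation — in particular $\bar u\,\overline{P}(\overline{\rho},\overline{K})$ and the Joule-type term involving $\overline{K}/\rho_0-|\bar u|^2/2$ multiplied by $\pi_1(\overline{\rho})$ — at the low regularity level $\dot B^{n/p-2}_{p,1}$: one must show that $|\bar u|^2\in L^1_T(\dot B^{n/p-2}_{p,1})$ and $\bar u\cdot(\text{pressure})\in L^1_T(\dot B^{n/p-2}_{p,1})$, and $\overline{u}\in E_p(T)$ only gives $\overline{u}\in L^1_T(\dot B^{n/p+1}_{p,1})\cap L^\infty_T(\dot B^{n/p-1}_{p,1})$, which by interpolation yields $\overline{u}\in L^2_T(\dot B^{n/p}_{p,1})$; this is exactly where $p<2n$ is needed so that the product map $\dot B^{n/p}_{p,1}\times\dot B^{n/p}_{p,1}\to\dot B^{n/p-2}_{p,1}$ (equivalently $\dot B^{n/p-1}_{p,1}$ behaves well under products with $\dot B^{n/p}_{p,1}$) is bounded, and where the improvement over \cite{danchin1} comes from. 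A secondary technical point is to handle the nonconstant $\rho_0$ multiplying $\partial_t\overline{u}$ and $\partial_t\widetilde{u}$: I would either divide by $\rho_0$ at the price of a product estimate using $\rho_0^{-1}\in1+\dot B^{n/p}_{p,1}$ (valid by \eqref{cond-vacuum} and the composition estimate), or keep $\rho_0\partial_t$ and treat $(\rho_0-1)\partial_t\widetilde{u}$ as an additional small source term. Finally, the Lipschitz continuity of the flow map follows by writing the equations for differences of two solutions, which are linear in the differences with coefficients depending on the solutions, and running the same estimates once more; no loss of regularity occurs precisely because everything is done in $E_p(T)$ itself.
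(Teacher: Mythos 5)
There is a genuine gap: you propose to take constant-coefficient Lam\'e and heat operators $\mu(1)\Delta$, $(\lambda(1)+\mu(1))\nabla\div$, $k(1)\Delta$ as the linear part of the iteration, and to push the discrepancies $\mu(\overline\rho)-\mu(1)$, $\lambda(\overline\rho)-\lambda(1)$, $k(\overline\rho)-k(1)$, together with the pressure gradient $\rho_0^{-1}\nabla(\rho_0^{-1}\pi_1(\rho_0)\overline K)$, into the source $F,G$. But Theorem \ref{thm1} makes no smallness assumption on $a_0=\rho_0-1$. Since $\overline\rho=J^{-1}\rho_0$, the discrepancy $\mu(\overline\rho)-\mu(1)$ splits into the genuinely small (in $T$) piece $\mu(J^{-1}\rho_0)-\mu(\rho_0)=O(\|Dv\|_{L^1_T(\dot B^{n/p}_{p,1})})$, plus the time-independent piece $\mu(\rho_0)-\mu(1)$, whose $\dot B^{n/p}_{p,1}$ norm is comparable to $\|a_0\|_{\dot B^{n/p}_{p,1}}$ and does \emph{not} vanish as $T\to0$. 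The latter multiplies $\nabla^2\overline u$, i.e.\ the top-order part of the input, so the self-map estimate produces a term of the form $C\|a_0\|_{\dot B^{n/p}_{p,1}}R$ which can only be dominated by $R$ if $\|a_0\|_{\dot B^{n/p}_{p,1}}$ is small. The same obstruction applies to your alternative of treating $(\rho_0-1)\partial_t\widetilde u$ as a ``small'' source (it has size $\sim\|a_0\|\,\|\partial_t\widetilde u\|_{L^1_T}$, again not small), and to the pressure gradient, whose contribution $\rho_0^{-1}\nabla(\rho_0^{-1}\pi_1(\rho_0)\psi)$ in $L^1_T(\dot B^{n/p-1}_{p,1})$ is $\gtrsim\|\psi\|_{L^1_T(\dot B^{n/p}_{p,1})}$ with an $a_0$-dependent constant that cannot be absorbed.

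This is exactly why the paper does not freeze the coefficients. Instead, the linear part of the iteration is taken to be the \emph{rough but time-independent} operators $L_{\rho_0}$ and $H_{\rho_0}$ of \eqref{def:op}, for which Propositions \ref{apriori:LM} and \ref{apriori:K} supply maximal $L^1$-in-time parabolic regularity under the sole condition $1<p<2n$ and ellipticity of the low-frequency truncated coefficients; the pressure coupling $\rho_0^{-1}\nabla(\rho_0^{-1}\pi_1(\rho_0)K)$ is likewise kept on the \emph{left}-hand side, acting on the new iterate $K$ rather than on the input $\psi$, and is handled by Proposition \ref{apriori:LM-K} after first solving the (decoupled) $K$-equation. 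With this choice, the sources $I_1,\dots,I_8$ in \eqref{terms} contain \emph{only} the time-dependent deviations $\adj(DX_v)-\id$, $A_v-\id$, $\mu(J_v^{-1}\rho_0)-\mu(\rho_0)$, etc., all of which are $O(\|Dv\|_{L^1_T(\dot B^{n/p}_{p,1})})$, hence genuinely small for small $T$ even when $a_0$ is large; the factors $(\|a_0\|_{\dot B^{n/p}_{p,1}}+1)^k$ appear only as harmless prefactors. Your identification of the role of $p<2n$ (products of two $\dot B^{n/p}_{p,1}$ functions landing in $\dot B^{n/p-2}_{p,1}$ for the quadratic energy terms), of $L^2_T(\dot B^{n/p}_{p,1})$ via interpolation, and of the recovery of $\overline a=(J^{-1}_u-1)a_0+a_0$ after the fixed point, are all correct; the gap is specifically in the choice of the linear operator around which you iterate.
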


In Eulerian coordinates, the above theorem implies:
\begin{thm}\label{thm2}
Under the same assumptions as in 
Theorem \ref{thm1}, with in addition $n\geq3$ and $1<p<n,$ 
System \eqref{CNS} has a unique local solution $(\rho,u,\theta)$ 
with $(u,\theta)\in E_p(T)$, $\rho$ bounded away from 0 and 
$\rho-1 \in\cC([0,T];\dot B^{\frac{n}{p}}_{p,1})$.
\end{thm}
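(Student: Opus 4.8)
\textbf{Proof plan for Theorem \ref{thm2}.}
The strategy is to transfer the Lagrangian well-posedness result of Theorem \ref{thm1} back to Eulerian coordinates by inverting the change of variables $y\mapsto X(t,y)$. First I would take the solution $(\overline\rho,\overline u,\overline K)$ provided by Theorem \ref{thm1} on some interval $[0,T]$, and set $u(t,x):=\overline u(t,X^{-1}(t,x))$, $\rho(t,x):=\overline\rho(t,X^{-1}(t,x))$, and recover $\theta$ from $\overline\theta=\overline K/\rho_0-|\overline u|^2/2$ composed with $X^{-1}$. For this one needs $X(t,\cdot)$ to be a bi-Lipschitz diffeomorphism of $\R^n$ for $t$ small: since $\overline u\in L^1_T(\dot B^{n/p+1}_{p,1})\hookrightarrow L^1_T(\mathrm{Lip})$ (this embedding is where $1<p<n$ and $n\geq3$ enter, to also control $\dot B^{n/p-1}_{p,1}\hookrightarrow\dot B^{n/p-2+?}$ and make $|\overline u|^2$, $\overline u\,\overline P$ etc.\ land in the right spaces), $DX=\mathrm{Id}+\int_0^t D\overline u\,d\tau$ is close to the identity in $L^\infty$ for $T$ small, hence invertible with $\det DX$ bounded away from $0$, and $X(t,\cdot)$ is a global diffeomorphism by a standard Hadamard-type argument.

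Next I would check that composition with $X^{\pm1}(t,\cdot)$ preserves the functional spaces in the statement, i.e.\ that $a\mapsto a\circ X^{\pm1}$ maps $\dot B^{n/p}_{p,1}$ to itself (and similarly for the spaces of $u$ and $\theta$), with norms depending continuously on $\|DX^{\pm1}-\mathrm{Id}\|$; this is a now-classical result on the action of bi-Lipschitz diffeomorphisms close to the identity on critical Besov spaces (used already in \cite{danchin2,danchin-mucha}), and the smallness of $\int_0^t D\overline u$ in $L^\infty$, together with $\int_0^t\nabla^2\overline u\in L^1_T(\dot B^{n/p-1}_{p,1})$, is exactly what is needed. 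Then the computations recorded in the Appendix (which show \eqref{CNSE} recasts as \eqref{CNSE-l}, hence \eqref{CNS-l}) run in reverse: a triple $(\overline\rho,\overline u,\overline K)$ solving \eqref{CNS-l} yields, via $X$, a triple $(\rho,u,E)$ solving \eqref{CNSE}, and then $(\rho,u,\theta)$ solving \eqref{CNS}, in the asserted regularity class. The restriction $\overline a\in\cC([0,T];\dot B^{n/p}_{p,1})$ transfers to $\rho-1\in\cC([0,T];\dot B^{n/p}_{p,1})$.

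For uniqueness in Eulerian coordinates I would argue the converse direction: given any Eulerian solution $(\rho,u,\theta)$ in the stated class, its flow $X_u$ defined by \eqref{def-flow} is again bi-Lipschitz for short time (since $u\in L^1_T(\dot B^{n/p+1}_{p,1})\hookrightarrow L^1_T(\mathrm{Lip})$), so one may pass to Lagrangian coordinates and obtain a solution of \eqref{CNS-l} in $E_p(T)$; the uniqueness part of Theorem \ref{thm1} then forces it to coincide with the one just constructed, hence the Eulerian solutions agree after composing back with $X_u^{-1}$. Finally the Lipschitz continuity of the Eulerian flow map follows from that of the Lagrangian one in Theorem \ref{thm1} combined with Lipschitz estimates for $(\overline u,X)\mapsto (\overline u\circ X^{-1},X^{-1})$ at fixed small time.

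\textbf{Main obstacle.} The delicate point is the bookkeeping of regularity under the change of variables: one must verify that \emph{every} nonlinear term appearing when undoing the Lagrangian transformation --- products such as $|\overline u|^2$, $\overline u\,\overline P(\overline\rho,\overline K)$, $k(\overline\rho)\,{}^tA\,\nabla(\overline K/\rho_0)$, and compositions $F(\overline\rho)$ --- stays in the space dictated by $E_p(T)$ after composition with $X^{-1}$, and this is precisely what narrows the admissible range to $n\geq3$ and $1<p<n$ (one loses the endpoint $p=n$ and the low dimension $n=2$ because the relevant product and composition estimates in $\dot B^{n/p-2}_{p,1}$ require $n/p-2>-n/p$, i.e.\ $p<n$, together with $\dot B^{n/p-1}_{p,1}\cdot\dot B^{n/p-1}_{p,1}\hookrightarrow\dot B^{n/p-2}_{p,1}$). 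Everything else is a routine, if lengthy, transcription of the Appendix computations.
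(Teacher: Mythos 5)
Your proposal follows essentially the same route as the paper: define $K_0$ from the data, apply Theorem~\ref{thm1} in Lagrangian coordinates, invert the diffeomorphism $X$ using the flow estimates and the diffeomorphism-composition result (Proposition~\ref{flow-regr}), and prove uniqueness by passing any Eulerian solution back to Lagrangian form — this is exactly the content of the paper's Proposition~\ref{prop-equiv} and its proof of Theorem~\ref{thm2}. One small technical remark: the lower-bound constraint in the product law is $\sigma>-\min(n/p,n/p')$, so the correct derivation of the range is that $n/p-2>-n/p'$ (for $p\le 2$) forces $n\ge 3$, while $n/p-2>-n/p$ (for $p\ge 2$) forces $p<n$; you should also note explicitly that this same product law is what guarantees $K_0=\rho_0(\theta_0+|u_0|^2/2)\in\dot B^{n/p-2}_{p,1}$, which is needed before Theorem~\ref{thm1} can even be invoked.
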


\begin{rem} Because our techniques rely on Fourier analysis, 
the same statements hold true for  periodic boundary conditions. 
\end{rem}
\begin{rem}
The equivalence between the Eulerian and the 
Lagrangian systems is provable only in the range $1<p<n$ and if $n\ge 3$
(see  Proposition \ref{prop-equiv} below), 
whence the stronger conditions on $p$ and $n.$ 
  Nevertheless the above statement  improves the results of  
\cites{danchin1,D2} as regards uniqueness : there, the condition $p\leq 2n/3$ was required. Besides,  only 
the case of small $a_0$ was considered. 

In dimension $n=2,$ or if $n\leq p<2n,$ only partial results are available. 
First, in the critical functional framework, prescribing $(a_0,u_0,\theta_0)$ or $(a_0,u_0,E_0)$ is
no longer equivalent since the product does not map $\dot B^{\frac np}_{p,1}\times\dot B^{\frac np-2}_{p,1}$ in $\dot B^{\frac np-2}_{p,1}$ any longer,
and the data are interrelated through \eqref{def-K}. 
Second, even if one chooses to work with $(a,u,E)$ rather than with $(a,u,\theta),$ having $(u,E)$ in $E_p(T)$ \emph{does not}  
quite imply that $(\bar u,\bar K)$ is in $E_p(T)$ (and the converse is false, too). Nevertheless, it is still possible 
to solve \eqref{CNSE},   see   Corollary \ref{cor1} for more details.
\end{rem}

\begin{rem}
The restriction that $1<p<n$ and $n\ge 3$ in 
Theorem \ref{thm2} is consistent with the recent paper by
Chen-Miao-Zhang \cite{chen-miao-zhang2}. 
There, the authors established the ill-posedness of 
the full compressible Navier-Stokes system in three dimension 
in the sense that the continuity of 
data-solution map fails at the origin 
in the critical Besov framework that we used, if $p>n.$ 
In other words, up to the limit case $p=n,$ Theorem \ref{thm2} is optimal 
as regards the local well-posedness issue with unknowns $(\rho,u,\theta).$ 
\end{rem}

\begin{rem}
Different formulations are known for 
expressing the third equation of \eqref{CNS}. 
Namely, the following quantities may be
used to rewrite the energy equation: the {\sl temperature} $\theta$, the {\sl total energy by unit mass} 
$M=\frac{|u|^2}{2}+\theta$ and the {\sl total energy by unit volume} 
$E=\rho(\frac{|u|^2}{2}+\theta).$ Those formulations are equivalent for smooth enough solutions. 
In the critical framework, working with the   {\sl total energy along the flow} $\overline K$ in Lagrangian coordinates allows
to get the widest range of exponents. 
\end{rem}

%%%%%%%%%%%%%%%%%%%%%%%%%%%%%%%%%%%%%%%%%%%%%%%%%%%

\subsection{Banach fixed point argument}

We end this section with a quick presentation of the Banach fixed point argument 
that will enable us to prove Theorem \ref{thm1}. 
%We assume $a_0=\rho_0-1 \in \dot B^{\frac{n}{p}}_{p,1}$, 
%$u_0\in \dot B^{\frac{n}{p}-1}_{p,1}$
%and $K_0\in \dot B^{\frac{n}{p}-2}_{p,1}$
%and solve the system \eqref{CNS-l} in the critical space
%defined in \eqref{def:space}.
To simplify the notation,  we drop the bars of the Lagrangian coordinates.
\medbreak
To start with, let us rewrite \eqref{CNS-l} as a system of parabolic 
equations with nonsmooth (but time independent)
coefficients. Regarding the velocity equation, 
we proceed as in \cite{danchin2}. 
Next, we write the equation for $K$ as follows:
$$\displaylines{\partial_t K
 -\div\big(k(\rho_0) \nabla (\frac{K}{\rho_0})\big)
 =\div\Big[ ( k(J^{-1}\rho_0)\adj(DX)^t\!A - k(\rho_0)\id )  \nabla\big(\frac{K}{\rho_0}\big) \hfill\cr\hfill
       +k(J^{-1}\rho_0)\adj(DX)^t\!A\nabla\big(\frac{|u|^2}{2}\big)  +\tau \cdot u -u P(J^{-1}\rho_0, K)
              \big)\Big]\cdotp }
$$
Denoting
\begin{equation}
\label{def:op}
\begin{array}{rl}
&L_{\rho_0} u 
:= \partial_t u 
    -\rho_0^{-1}\div \big(2\mu(\rho_0)D(u) + \lambda(\rho_0)\div u \id \big) \\[1ex]
\text{and}\quad
&H_{\rho_0} K 
:= \partial_t K
       -\div \big( k(\rho_0)\nabla (\rho_0^{-1}K) \big).
\end{array}
\end{equation}
System \eqref{CNS-l} thus  writes
\begin{equation}
\label{CNS-l2}
\left\{
\begin{array}{rcl}
L_{\rho_0} u + \rho_0^{-1}\nabla (\rho_0^{-1}\pi_1(\rho_0)K)
  &\!\!\!\!=\!\!\!\! &\rho_0^{-1}\div\big(I_1(u,u)\!+\!I_2(u,u)+I_3(u,u)+I_4(u,K)\big) \\[2ex]
H_{\rho_0} K &\!\!\!\!=\!\!\!\! &\div\big(I_5(u,K) + I_6(u,K)
     + I_7(u,K)+ I_8(u,u)\big),
\end{array}
\right.\end{equation} 
with
\begin{equation}\label{terms}
\begin{array}{rcl}
I_1(v,w) &\!\!\!\!:=\!\!\!\!& (\adj(DX_v)- \id)\left(2\mu(J_v^{-1}\rho_0)D_{A_v}(w) 
                         \! +\! \lambda(J_v^{-1}\rho_0)\div_{\!A_v}w\,\id\right), \\[1ex]
I_2(v,w) &\!\!\!\!:=\!\!\!\!& 2(\mu(J_v^{-1}\rho_0)-\mu(\rho_0)) D_{A_v}(w)
              \!+\! (\lambda(J_v^{-1}\rho_0)-\lambda(\rho_0))\div_{\!A_v}w\,\id, \\[1ex]
I_3(v,w) &\!\!\!\!:=\!\!\!\!& 2\mu(\rho_0)(D_{A_v}(w)-D(w))+ \lambda(\rho_0)(\div_{\!A_v}w-\div w)\id, \\[1ex]
I_4(v,\psi) &\!\!\!\!:=\!\!\!\!& -\adj(DX_v) P(J_v^{-1}\rho_0, \psi)+ \frac{\pi_1(\rho_0)}{\rho_0}\psi \id, \\[1ex]
I_5(v,\psi) &\!\!\!\!:=\!\!\!\! &( k(J_v^{-1}\rho_0)\adj(DX_v)^t\!A_v - k(\rho_0)\id )  \nabla(\frac{\psi}{\rho_0}), \\[1ex]
I_6(v,\psi) &\!\!\!\!:=\!\!\!\!& k(J_v^{-1}\rho_0)\adj(DX_v)^t\!A_v \nabla (\frac{|v|^2}{2}), \\[1ex]
I_7(v,\psi) &\!\!\!\!:=\!\!\!\!&  P(J_v^{-1}\rho_0, \psi)\adj(DX_v)\cdot v, \\[1ex]
I_8(v,w) &\!\!\!\!:=\!\!\!\!& \adj(DX_v) \big(\lambda(J_v^{-1}\rho_0)\div_{\!A_v} w \ \id 
                    + 2\mu(J_v^{-1}\rho_0) D_{A_v}(w) \big) \cdot w.  
\end{array}
\end{equation}
In order to solve \eqref{CNS-l} locally, 
it suffices to show that the map
\begin{equation}
\label{map}
\Phi : (v,\psi) \mapsto (u,K)
\end{equation}
with $(u,K)$ the solution to 
\begin{equation}
\label{CNS-lf}
\left\{
\begin{array}{rcl}
L_{\rho_0} u  + \rho_0^{-1}\nabla(\rho_0^{-1}\pi_1(\rho_0)K)
   &\!\!\!=\!\!\!& \rho_0^{-1}\div\!\big(I_1(v,v)+I_2(v,v)+I_3(v,v)+I_4(v,\psi)\big) \\[2ex]
H_{\rho_0} K &\!\!\!=\!\!\!& \div\big(I_5(v,\psi) + I_6(v,\psi)
     + I_7(v,\psi)+ I_8(v,v)\big),
\end{array}
\right.
\end{equation} 
has a fixed point in $E_p(T)$ for small enough $T$.
\bigbreak
The rest of the paper unfolds as follows: in the second section, we establish  the 
maximal regularity estimates for the linear  parabolic system corresponding to the l.h.s. of \eqref{CNS-lf}. 
It turns out that completely  decoupling the system into two parabolic equations 
for the velocity and energy will cause some loss of estimate: 
we ought to take into account the pressure term as the linear 
term of the system, which is not necessary in the 
barotropic case. 
In the third section, we shall prove Theorem \ref{thm1} and Theorem \ref{thm2} 
by combining the a priori estimate in the second section and Banach's fixed point theorem. 
In the Appendix, we list some results concerning the Lagrangian coordinates and Besov spaces that may be
found in the literature (see \cites{bahouri-chemin-danchin,danchin2,danchin-mucha}).

%%%%%%%%%%%%%%%%%%%%%%%

\section{A priori estimates for linear parabolic systems}
\label{s:parabolic}
We here aim at  establishing well-posedness and a priori estimates for 
the linear part of \eqref{CNS-lf}, namely
\begin{equation}
\label{L-LMK}
\left\{
\begin{array}{l}
\partial_t u 
    -\rho_0^{-1}\div \big(2\mu(\rho_0)D(u) + \lambda(\rho_0)\div u \,\id \big)
    + \rho_0^{-1}\nabla (\rho_0^{-1}\pi_1(\rho_0)K)
  = f, \\[2ex]
\partial_t K
       -\div \big(k(\rho_0)\nabla(\rho_0^{-1} K)\big)= g. 
\end{array}
\right.
\end{equation}

%For a starter, we shall look at the following heat equation with nonconstant coefficients: 
%\begin{equation}\label{H}\partial_t u - a\div(b \nabla u) = f,\end{equation}
%the study of which in the Besov framework is summarized in papers such as \cite{danchin2}\cite{danchin4}. 
%\begin{prop}[\cite{danchin2}\cite{danchin4}]\label{apriori:H}
%Let $a$ and $b$ be bounded functions such that there 
%exists a constant $\alpha$ with $ab\ge \alpha>0$.
%Assume that $a\nabla b$ and $b\nabla a$  are in $L^\infty(0,T;\dot B^{\frac{n}{p}-1}_{p,1})$
%for some $1<p<\infty$. There exist two constants $\eta$ and $\alpha$ such that if for some $m\in\Z$ we have
%\begin{equation}\nonumber\label{cond:H1}
%\inf_{(t,x)\in[0,T]\times\R^n}\dot S_m (ab)(t,x) \ge \frac{\alpha}{2},
%\end{equation}\begin{equation}\nonumber\label{cond:H2}
%\|(\id-\dot S_m)(a\nabla b, b\nabla a)\|_{L^\infty_T(\dot B^{\frac{n}{p}-1}_{p,1})}\le\eta\alpha,\end{equation}
%then the solutions to \eqref{LM} satisfy for all $t\in[0,T]$,
%\begin{equation}\notag
%\begin{split}\|u\|_{L^\infty_t(\dot B^s_{p,1})}&+\alpha \|u\|_{L^1_t(\dot B^{s+2}_{p,1})} \\
%&\le C (\|u_0\|_{\dot B^s_{p,1}}+\|f\|_{L^1_t(\dot B^s_{p,1})}) 
%\exp\left(\frac{C}{\alpha}   \int_0^t
  % \|\dot S_m(a\nabla a, b\nabla a)\|_{\dot B^{\frac{n}{p}}_{p,1}}^2 d\tau\right)\end{split}\end{equation}
%whenever \begin{equation}\label{cond:s1}
%\displaystyle -\min(\frac{n}{p},\frac{n}{p'} )<s\le\frac{n}{p}-1.\end{equation}\end{prop}
The analysis of the first equation is based on results that have been established recently in \cite{danchin2} for 
the following Lam\'e system with 
nonsmooth coefficients:
\begin{equation}\label{LM}
\partial_t u - 2a\div(\mu D(u))-b\nabla(\lambda\div u) = f,
\end{equation}
(here both $u$ and $f$ are valued in $\R^n$) when 
 the following uniform ellipticity condition is satisfied:
\begin{equation}\label{ellip}
\alpha:=\min\left( \inf_{(t,x)\in[0,T]\times\R^n}(a\mu)(t,x), \ 
                  \inf_{(t,x)\in[0,T]\times\R^n}(2a\mu+b\lambda)(t,x)\right) > 0.
\end{equation}
\begin{prop}[\cite{danchin2}]
\label{apriori:LM} 
Let $a$, $b$, $\lambda$ and $\mu$ be bounded 
functions satisfying \eqref{ellip}.
Assume that $a\nabla\mu$, $b\nabla\lambda$, $\mu\nabla a$ and 
$\lambda\nabla b$ are in $L^\infty(0,T;\dot B^{\frac{n}{p}-1}_{p,1})$
for some $1<p<2n,$ and  that there exist some constants $\bar a$,
$\bar b$, $\bar \lambda$ and $\bar\mu$ satisfying
\begin{equation}
\nonumber
2\bar a\bar\mu+\bar b\bar\lambda>0 \ \text{and} \ \bar a\bar\mu>0,
\end{equation}
and such that $a-\bar a$, $b-\bar b$, $\lambda-\bar\lambda$ and $\mu-\bar\mu$
are in $\cC([0,T];\dot B^{\frac{n}{p}}_{p,1})$. Finally, suppose that 
\begin{equation}
\nonumber
\lim_{m\rightarrow+\infty}
\|(\id-\dot S_m)(a\nabla\mu, b\nabla\lambda, 
   \mu\nabla a, \lambda\nabla b)\|_{L^\infty_T(\dot B^{\frac{n}{p}-1}_{p,1})}
=0.
\end{equation}
Then for any data $u_0\in\dot B^{\frac np-1}_{p,1}$ and $f\in L^1(0,T;\dot B^{\frac np-1}_{p,1})$, System \eqref{LM} admits a unique solution 
$u\in\cC([0,T];\dot B^{\frac{n}{p}}_{p,1})$ with $\nabla u\in L^1(0,T;\dot B^{\frac np}_{p,1})$.
\smallbreak
Furthermore, there exist two constants $\eta$ and $C$ such 
that if $m$ is so large as to satisfy
\begin{equation}
\label{cond:LM1}
\min\left( \inf_{(t,x)\in[0,T]\times\R^n}\dot S_m (a\mu)(t,x), \ 
           \inf_{(t,x)\in[0,T]\times\R^n}\dot S_m (2a\mu+b\lambda)(t,x)\right) 
      \ge \frac{\alpha}{2},
\end{equation}
\begin{equation}
\label{cond:LM2}
\|(\id-\dot S_m)(a\nabla\mu, b\nabla\lambda, \mu\nabla a, \lambda\nabla b)\|_{L^\infty_T(\dot B^{\frac{n}{p}-1}_{p,1})}\le\eta\alpha,
\end{equation}
then we have for all $t\in[0,T]$,
$$\displaylines{
\|u\|_{L^\infty_t(\dot B^{\frac np-1}_{p,1})}+\alpha \|\nabla u\|_{L^1_t(\dot B^{\frac np}_{p,1})} \hfill\cr\hfill
\le C (\|u_0\|_{\dot B^{\frac np-1}_{p,1}}
    +\|f\|_{L^1_t(\dot B^{\frac np-1}_{p,1})}) 
\exp\left(\frac{C}{\alpha}
   \int_0^t\|\dot S_m(a\nabla\mu, b\nabla\lambda,
         \mu\nabla a, \lambda\nabla b)\|_{\dot B^{\frac{n}{p}}_{p,1}}^2 d\tau \right).}
$$
\end{prop}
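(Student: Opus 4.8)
This estimate is essentially that of \cite{danchin2}, and I would proceed as follows. First I would reduce the whole statement to the a priori inequality: regularizing the coefficients $a,b,\lambda,\mu$ and the data $(u_0,f)$ produces smooth approximate solutions satisfying the stated bound uniformly, so that existence follows by a standard compactness argument, while uniqueness follows by applying the same bound to the difference of two solutions (which solves \eqref{LM} with zero data). Thus the heart of the matter is the a priori estimate for smooth solutions.

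Next I would establish the constant-coefficient model estimate for $\partial_t v-\overline a\overline\mu\,\Delta v-(\overline a\overline\mu+\overline b\overline\lambda)\nabla\div v=F$. Splitting $v=\cP v+\cQ v$ by the Leray--Helmholtz projection, the divergence-free part solves a heat equation with viscosity $\overline a\overline\mu>0$ and the potential part a heat equation with viscosity $2\overline a\overline\mu+\overline b\overline\lambda>0$; localizing each by $\dot\Delta_j$ and using the Bernstein equivalence $\|\nabla\dot\Delta_j v\|_{L^p}\simeq 2^j\|\dot\Delta_j v\|_{L^p}$ gives $\|\dot\Delta_j v(t)\|_{L^p}\lesssim e^{-c\overline\alpha 2^{2j}t}\|\dot\Delta_j v(0)\|_{L^p}+\int_0^t e^{-c\overline\alpha 2^{2j}(t-\tau)}\|\dot\Delta_j F(\tau)\|_{L^p}\,d\tau$ with $\overline\alpha:=\min(\overline a\overline\mu,2\overline a\overline\mu+\overline b\overline\lambda)$. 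Multiplying by $2^{j(\frac np-1)}$, and separately by $\overline\alpha 2^{2j}$, summing over $j$ and using $\int_0^t\overline\alpha 2^{2j}e^{-c\overline\alpha 2^{2j}(t-\tau)}\,d\tau\lesssim 1$ yields the maximal $L^1$-in-time regularity estimate in the spaces of the statement.

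Then I would pass to the variable-coefficient system. Expanding $2a\div(\mu D(u))+b\nabla(\lambda\div u)=a\mu\,\Delta u+(a\mu+b\lambda)\nabla\div u+2a\nabla\mu\cdot D(u)+b\nabla\lambda\,\div u$ and applying $\dot\Delta_j$ to \eqref{LM}, I would freeze the two second-order coefficients at their low-frequency truncations $\dot S_m(a\mu)$ and $\dot S_m(a\mu+b\lambda)$, so that \eqref{cond:LM1} keeps ellipticity with constant $\alpha/2$ at the level of blocks and the model estimate above applies to $\dot\Delta_j u$. The forcing of the block equation then splits into $\dot\Delta_j f$, into ``rough'' contributions $(\id-\dot S_m)(a\mu)\Delta\dot\Delta_j u$, $[\dot\Delta_j,(\id-\dot S_m)(a\mu)]\Delta u$ and their analogues, and into ``smooth'' contributions, namely the commutators $[\dot\Delta_j,\dot S_m(a\mu)]\Delta u$ (and analogues) together with the genuinely lower-order terms $2a\nabla\mu\cdot D(\dot\Delta_j u)+b\nabla\lambda\,\div\dot\Delta_j u$ and their commutators.

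Finally I would close the estimate by summing over $j$ with the weights $2^{j(\frac np-1)}$ and using the standard product and commutator estimates in homogeneous Besov spaces (see \cite{bahouri-chemin-danchin}). The rough contributions are controlled, via the product law $\dot B^{\frac np-1}_{p,1}\cdot\dot B^{\frac np}_{p,1}\hookrightarrow\dot B^{\frac np-1}_{p,1}$ — which holds precisely because $1<p<2n$ — by $C\|(\id-\dot S_m)(a\nabla\mu,b\nabla\lambda,\mu\nabla a,\lambda\nabla b)\|_{L^\infty_T(\dot B^{\frac np-1}_{p,1})}\|\nabla u\|_{L^1_t(\dot B^{\frac np}_{p,1})}$, which by \eqref{cond:LM2} does not exceed $C\eta\alpha\|\nabla u\|_{L^1_t(\dot B^{\frac np}_{p,1})}$ and is absorbed into the left-hand side once $\eta$ is small. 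The smooth contributions are bounded by $C\int_0^t\|\dot S_m(a\nabla\mu,b\nabla\lambda,\mu\nabla a,\lambda\nabla b)\|_{\dot B^{\frac np}_{p,1}}\|\nabla u\|_{\dot B^{\frac np-1}_{p,1}}\,d\tau$; inserting the interpolation inequality $\|\nabla u\|_{\dot B^{\frac np-1}_{p,1}}\lesssim\|u\|_{\dot B^{\frac np-1}_{p,1}}^{1/2}\|\nabla^2 u\|_{\dot B^{\frac np-1}_{p,1}}^{1/2}$ and applying Young's inequality splits this into a term $\le\frac{\alpha}{4C}\|\nabla^2 u\|_{L^1_t(\dot B^{\frac np-1}_{p,1})}$ (absorbed) and a term $\le\frac{C}{\alpha}\int_0^t\|\dot S_m(a\nabla\mu,b\nabla\lambda,\mu\nabla a,\lambda\nabla b)\|_{\dot B^{\frac np}_{p,1}}^2\|u\|_{\dot B^{\frac np-1}_{p,1}}\,d\tau$, on which Gr\"onwall's lemma produces exactly the exponential factor appearing in the statement. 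I expect the main obstacle to lie in carrying out all the product and commutator estimates with no loss of derivative throughout the whole range $1<p<2n$ — where $\dot B^{\frac np-1}_{p,1}$ is not an algebra — and in keeping precise track of which contributions are small (hence absorbable) and which are merely bounded (hence fed into Gr\"onwall).
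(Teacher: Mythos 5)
Your proposal follows the same architecture as the proof in \cite{danchin2} that the paper simply quotes here, and it is the same scheme the paper itself carries out in detail for the companion Proposition \ref{apriori:K}: truncate the coefficients by $\dot S_m$, localize with $\dot\Delta_j$, absorb the $(\id-\dot S_m)$ contributions using \eqref{cond:LM2} and the product law valid for $1<p<2n$, and treat the $\dot S_m$ contributions by interpolation, Young's inequality and Gr\"onwall to produce the exponential factor. Your identification of where $p<2n$ enters and of which terms are absorbed versus fed into Gr\"onwall is exactly right.

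One step, as written, would not go through: you derive the dyadic block estimate from a \emph{constant-coefficient} Duhamel/semigroup bound and then claim it ``applies to $\dot\Delta_j u$'' once the leading coefficients are frozen at $\dot S_m(a\mu)$ and $\dot S_m(a\mu+b\lambda)$. But these truncated coefficients are still $x$-dependent, and their deviation from the constants $\bar a\bar\mu$, $2\bar a\bar\mu+\bar b\bar\lambda$ lies in $\dot B^{\frac np}_{p,1}$ without being small, so it cannot be absorbed as a perturbation of the constant-coefficient operator; moreover the Leray projection $\cP$ does not commute with multiplication by $\dot S_m(a\mu)$, so the clean splitting into two scalar heat equations is lost. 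The correct tool at this stage is the $L^p$ energy method on each block combined with the generalized Bernstein-type inequality of the Appendix of \cite{danchin1} (this is precisely what the paper invokes in the proof of Proposition \ref{apriori:K}): the pointwise lower bound \eqref{cond:LM1} then yields the dissipative term $\frac\alpha2\,2^{2j}\|\dot\Delta_j u\|_{L^p}$ directly, and the remainder of your argument proceeds unchanged.
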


As the energy equation of \eqref{L-LMK}  is of the following form: 
\begin{equation}
\label{K}
\partial_t u - \div(k \nabla (cu)) = f,
\end{equation}
and thus does not quite enter in the framework of Proposition \ref{apriori:LM}, we shall need the following statement.
\begin{prop}\label{apriori:K}
Let $k$ be a bounded function such that there 
exists a constant $\beta$ with $k\ge \beta>0$.
Assume that $\nabla k$ and $\nabla c$ are
in $L^\infty(0,T;\dot B^{\frac{n}{p}-1}_{p,1})$
for some $1<p<\infty,$ that 
$$\lim_{m\rightarrow+\infty}
\|(\id-\dot S_m)(k\nabla c, c\nabla k)\|_{L^\infty_T(\dot B^{\frac{n}{p}-1}_{p,1})}=0,
$$
and that $k-\bar k$ and $c-\bar c$ are in $\cC([0,T];\dot B^{\frac np-1}_{p,1})$ for
some positive constants $\bar k$ and $\bar c.$
\medbreak
Then there exist two constants $\eta$ and $C$ such 
that if for some $m\in\Z$ we have
\begin{equation}
\label{cond:K1}
\inf_{(t,x)\in[0,T]\times\R^n}\dot S_m (kc)(t,x) \ge \frac{\beta}{2},
\end{equation}
\begin{equation}
\label{cond:K2}
\|(\id-\dot S_m) (k\nabla c, c\nabla k)\|_{L^\infty_T(\dot B^{\frac{n}{p}-1}_{p,1})}
\le\eta\beta,
\end{equation}
then the solutions to \eqref{LM} satisfy for all $t\in[0,T]$,
\begin{equation}
\notag
\begin{split}
\|u\|_{L^\infty_t(\dot B^s_{p,1})}
&+\beta \|u\|_{L^1_t(\dot B^{s+2}_{p,1})} \\
&\le C (\|u_0\|_{\dot B^s_{p,1}}+\|f\|_{L^1_t(\dot B^s_{p,1})}) 
\exp\left(\frac{C}{\beta}
   \int_0^t\|\dot S_m(k\nabla c,c\nabla k)\|_{\dot B^{\frac{n}{p}}_{p,1}}^2 
   d\tau   \right)
\end{split}
\end{equation}
whenever $s$ satisfies
\begin{equation}
\label{cond:sK}
\displaystyle -\min\Bigl(\frac{n}{p},\frac{n}{p'}\Bigr)-1<s\le\frac{n}{p}-2.
\end{equation}
\end{prop}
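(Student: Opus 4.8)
The plan is to reduce the variable-coefficient equation \eqref{K} to a frequency-localized perturbation of the constant-coefficient heat equation, exactly in the spirit of the proof of Proposition \ref{apriori:LM}, but now keeping track of the two extra derivatives that the operator $u\mapsto\div(k\nabla(cu))$ produces relative to $u\mapsto\partial_t u-k\Delta u$. First I would rewrite
\[
\partial_t u-\div\bigl(k\nabla(cu)\bigr)
=\partial_t u-kc\,\Delta u-\bigl((k\nabla c+c\nabla k)\cdot\nabla u+\div(k\nabla c)\,u\bigr),
\]
and then split the coefficient $kc$ via the Littlewood-Paley truncation: write $kc=\dot S_m(kc)+(\id-\dot S_m)(kc)$, the point being that, by \eqref{cond:K1}, $\dot S_m(kc)\ge\beta/2$ is a genuinely elliptic (smooth, slowly varying) coefficient, while $(\id-\dot S_m)(kc)$ and the first-order terms are small in the relevant norm once $m$ is large, by \eqref{cond:K2} and the hypothesis that $\nabla k,\nabla c\in L^\infty_T(\dot B^{n/p-1}_{p,1})$.

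The core estimate is then a localized energy estimate: apply $\dot\Delta_j$ to the equation, commute $\dot\Delta_j$ with the smooth coefficient $\dot S_m(kc)$ (producing a commutator that is controlled, after multiplication by $2^{js}$ and summation over $j$, by $\|\nabla(\dot S_m(kc))\|_{\dot B^{n/p}_{p,1}}\|u\|_{\dot B^{s+1}_{p,1}}$ and absorbed by interpolation between $\dot B^s$ and $\dot B^{s+2}$), use the maximal regularity of the constant-coefficient heat semigroup on each dyadic block together with the spectral lower bound $\dot S_m(kc)\,2^{2j}\gtrsim\beta\,2^{2j}$, and treat the remaining terms — $(\id-\dot S_m)(kc)\Delta u$, the first-order terms $(k\nabla c+c\nabla k)\cdot\nabla u$, and the zeroth-order term $\div(k\nabla c)\,u$ — as perturbations on the right-hand side. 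Here is where the constraint \eqref{cond:sK} enters: the product and commutator estimates in Besov spaces that bound these perturbation terms by (a small constant)$\times\|u\|_{\dot B^{s+2}_{p,1}}$ plus lower-order norms require $s+1$ to lie in the admissible range for the paraproduct/remainder decomposition against a function in $\dot B^{n/p}_{p,1}$ (equivalently $\nabla c, \nabla k\in\dot B^{n/p-1}_{p,1}$), which is precisely $-\min(n/p,n/p')<s+1\le n/p$, i.e.\ \eqref{cond:sK}; the lower bound is the usual $\dot B^{n/p-1}_{p,1}\cdot\dot B^{\sigma}_{p,1}\hookrightarrow\dot B^{\sigma}_{p,1}$ condition, $-n/p'<\sigma<n/p$ with $\sigma=s+2$ adjusted for the derivative count, and the upper bound $s\le n/p-2$ keeps all spaces in the range where $\dot B^{s}_{p,1}$ is a well-defined subspace of $\cS'$ and the product rules apply without endpoint loss. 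Collecting everything, choosing $\eta$ small enough that the small terms are absorbed into the left side, one obtains
\[
\|u\|_{L^\infty_t(\dot B^s_{p,1})}+\beta\|u\|_{L^1_t(\dot B^{s+2}_{p,1})}
\lesssim\|u_0\|_{\dot B^s_{p,1}}+\|f\|_{L^1_t(\dot B^s_{p,1})}
+\frac{C}{\beta}\int_0^t\|\dot S_m(k\nabla c,c\nabla k)\|_{\dot B^{n/p}_{p,1}}\,\|u\|_{\dot B^{s+1}_{p,1}}\,d\tau,
\]
and a further interpolation $\|u\|_{\dot B^{s+1}_{p,1}}\le C\|u\|_{\dot B^s_{p,1}}^{1/2}\|u\|_{\dot B^{s+2}_{p,1}}^{1/2}$ followed by Young's inequality (to absorb the $\dot B^{s+2}$ factor) and Grönwall's lemma yields the stated exponential bound. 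Existence and uniqueness then follow from this a priori estimate by the standard continuation/approximation argument (regularize the coefficients and data, solve the smooth problem, pass to the limit using the uniform bound), so I would state that part briefly and refer to the analogous argument for Proposition \ref{apriori:LM} in \cite{danchin2}.

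The main obstacle I anticipate is the bookkeeping of the admissible regularity index: one must verify that \emph{every} term — the commutator $[\dot\Delta_j,\dot S_m(kc)]\Delta u$, the transport-type term $(k\nabla c+c\nabla k)\cdot\nabla u$, the zeroth order term, and also the nonlinear-in-coefficients contributions hidden in $\div(k\nabla c)$ — can be estimated in $\dot B^s_{p,1}$ with a gain of $2^{2j}$-type factors that is compatible with the left-hand side $\dot B^{s+2}_{p,1}$, and that the sole constraint this imposes is \eqref{cond:sK} and nothing stronger. In particular the lower bound $-\min(n/p,n/p')-1<s$ (rather than the simpler $-n/p<s$) comes from the need to multiply by coefficients whose gradient lies in $\dot B^{n/p-1}_{p,1}$ on \emph{both} sides of the Bony decomposition, which forces the dual exponent $p'$ to appear; making this precise for all the terms simultaneously is the delicate point, but it is a direct adaptation of the corresponding step in the proof of Proposition \ref{apriori:LM}.
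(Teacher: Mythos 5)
Your proposal follows the paper's proof scheme essentially to the letter: rewrite the equation so the principal coefficient is $kc$, split $kc=\dot S_m(kc)+(\id-\dot S_m)(kc)$, apply $\dot\Delta_j$ and absorb the commutator with the smoothed coefficient, use the Bernstein-type lower bound $\dot S_m(kc)\ge\beta/2$ to extract the gain $\beta 2^{2j}\|u_j\|_{L^p}$, control the remaining terms via the product/commutator estimates under condition \eqref{cond:sK}, then interpolate, apply Young, absorb the small pieces into the left, and conclude by Gr\"onwall; existence by the continuity method as in Proposition \ref{apriori:LM}.

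One technical point worth flagging: you expand the operator into non-divergence form $kc\Delta u+(\text{first order})\cdot\nabla u+\div(k\nabla c)\,u$, so that the zeroth-order term must be estimated as a product $\|\div(k\nabla c)\cdot u\|_{\dot B^s_{p,1}}$ with $\div(k\nabla c)\in\dot B^{n/p-2}_{p,1}$ and $u\in\dot B^{s+2}_{p,1}$; by Proposition \ref{prop:prd} (with $\nu=2$) this needs $-\min(n/p,n/p')<s$, which is strictly stronger than \eqref{cond:sK} and loses the interval $s\in\bigl(-\min(n/p,n/p')-1,\,-\min(n/p,n/p')\bigr]$. The paper avoids this by keeping the term inside the divergence, writing $\div\bigl((k\nabla c)\,u\bigr)$ and estimating $(k\nabla c)\,u$ in $\dot B^{s+1}_{p,1}$ (now $\nu=1$, $\sigma=s+1$, which needs exactly $-\min(n/p,n/p')-1<s\le n/p-2$). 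The same remark applies to the commutator: the paper commutes with $\nabla u$ under a $\div$, i.e.\ $\div[\dot\Delta_j,\dot S_m(kc)]\nabla u$, rather than with $\Delta u$. If you retain the divergence structure throughout, your argument yields exactly the paper's range of $s$.
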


\begin{proof} We focus on the proof of a priori estimates. Existence follows from the continuity method
as for Proposition \ref{apriori:LM} (see \cite{danchin2}). 

First, we smooth out the coefficient $kc$ according to the low
frequency cut-off operator $\dot S_m,$ with  $m\in\Z$ to be determined later:
\begin{equation}\nonumber
\partial_t u - \div(\dot S_m (kc) \nabla u) = f + \div(k\nabla c\cdot u)
     + \div \big ((\id-\dot S_m) (kc) \nabla u \big).
\end{equation}
Next, applying  Littlewood-Paley operator $\dot\Delta_j$ to the
above equation yields \begin{equation}\nonumber
\begin{split}
&\partial_t u_j - \div(\dot S_m (kc) \nabla u_j )
   = f_j + \div \dot\Delta_j ( \dot S_m (k \nabla c) \cdot u) 
         + \div \dot\Delta_j ( (\id-\dot S_m)(k\nabla c) \cdot u ) \\
&\qquad\qquad\qquad\qquad\qquad\quad
     + \div [\dot\Delta_j, \dot S_m (kc)] \nabla u
     + \div \dot\Delta_j ( (\id-\dot S_m )(kc) \nabla u ).
\end{split}
\end{equation}
{}From energy arguments combined with  the  Bernstein-type inequality of the Appendix of \cite{danchin1}, we get (formally) 
$$\displaylines{
\frac{d}{dt} \|u_j\|_{L^p} 
 + \beta 2^{2j} \|u_j\|_{L^p} 
\lesssim \|f_j\|_{L^p}  
  +  \|\div \dot\Delta_j ( \dot S_m (k \nabla c) \cdot u)\|_{L^p} \hfill\cr\hfill
  +  \|\div \dot\Delta_j ( (\id-\dot S_m)(k\nabla c) \cdot u )\|_{L^p}+ \|\div [\dot\Delta_j, \dot S_m (kc)] \nabla u\|_{L^p}
     + \|\div \dot\Delta_j ( (\id-\dot S_m )(kc) \nabla u )\|_{L^p}.}
$$
Whence, multiplying both sides by $2^{js}$ and performing a $\ell^1$ summation over $j\in\Z,$
\begin{multline}\label{eq:star}
\|u\|_{L_t^\infty(\dot B^s_{p,1})}+\beta\|u\|_{L_t^1(\dot B^{s+2}_{p,1})} \lesssim \|u_0\|_{\dot B^s_{p,1}}+\|f\|_{L_t^1(\dot B^{s}_{p,1})}\\
+\int_0^t \sum_j2^{js}\bigl(\|\div \dot\Delta_j ( (\id-\dot S_m )(kc) \nabla u )\|_{L^p}+ \|\div \dot\Delta_j ((\id- \dot S_m) (k \nabla c) \cdot u)\|_{L^p}\bigr)\,d\tau\\
+\int_0^t\sum_j 2^{js}\bigl( \|\div \dot\Delta_j(\dot S_m(k\nabla c) \cdot u )\|_{L^p}+ \|\div [\dot\Delta_j, \dot S_m (kc)] \nabla u\|_{L^p}
     \bigr)\,d\tau.
\end{multline}
In the following computations, let us denote by $(c_j)_{j\in\Z}$ a sequence belonging 
to the unit sphere of $\ell^1(\Z).$ 
If $\displaystyle -\min\Big(\frac{n}{p},\frac{n}{p'}\Big)-1<s\le\frac{n}{p}-1$ then we have by Proposition \ref{prop:prd}:
\begin{equation}\nonumber
\begin{split}
\|\div \dot\Delta_j ( (\id-\dot S_m )(k c) \nabla u )\|_{L^p}
\le c_j 2^{-js} 
  \| (\id-\dot S_m)(kc)\|_{\dot B^{\frac{n}{p}}_{p,1}}
  \| \nabla u \|_{\dot B^{s+1}_{p,1}}.
\end{split}
\end{equation}
If $s$ satisfies $\displaystyle -\min\Big(\frac{n}{p},\frac{n}{p'}\Big)-1<s\le\frac{n}{p}-2$ then
\begin{equation}\nonumber
\begin{split}
\|\div\dot\Delta_j ((\id-\dot S_m)(k\nabla c) \cdot u)\|_{L^p}
\le c_j 2^{-js} 
  \| (\id-\dot S_m)(k\nabla c)\|_{\dot B^{\frac{n}{p}-1}_{p,1}}
  \|u\|_{\dot B^{s+2}_{p,1}}.
\end{split}
\end{equation}
Consequently, the second line of the \eqref{eq:star} may be absorbed by the l.h.s. if $\eta$ has been chosen small enough
in \eqref{cond:K2}. 
Next, if $s$ satisfies $\displaystyle -\min\Big(\frac{n}{p},\frac{n}{p'}\Big)-1<s\le\frac{n}{p}-1$ then
\begin{equation}\nonumber
\begin{split}
\|\div \dot\Delta_j ( \dot S_m (k \nabla c) \cdot u)\|_{L^p}
\le c_j 2^{-js} 
  \|\dot S_m (k \nabla c) \|_{\dot B^{\frac{n}{p}}_{p,1}}
  \|u\|_{\dot B^{s+1}_{p,1}},
\end{split}
\end{equation}
Finally, for $\displaystyle -\min\Big(\frac{n}{p},\frac{n}{p'}\Big)-1<s\le\frac{n}{p}-1,$ 
we  have by Proposition \ref{prop:comm1}, 
\begin{equation}\nonumber
\begin{split}
 \|\div [\dot\Delta_j, \dot S_m (kc)] \nabla u\|_{L^p}
\le c_j 2^{-js} 
  \| \nabla \dot S_m (kc)\|_{\dot B^{\frac{n}{p}}_{p,1}}
  \| \nabla u \|_{\dot B^{s}_{p,1}}.
\end{split}
\end{equation}
 Therefore by interpolation and Young's inequality, we get for all $\eta>0,$
\begin{equation}\nonumber
\begin{split}
 \|\div [\dot\Delta_j, &\dot S_m (kc)] \nabla u\|_{L^p}
+ \|\div \dot\Delta_j ( \dot S_m (k \nabla c) \cdot u)\|_{L^p} \\
&\le c_j 2^{-js} 
  \bigg(\frac C{\eta\beta}   (\| \nabla \dot S_m (kc)\|_{\dot B^{\frac{n}{p}}_{p,1}}^2
    +\| \dot S_m (k \nabla c)\|_{\dot B^{\frac{n}{p}}_{p,1}}^2)
     \|u\|_{\dot B^{s}_{p,1}}
     +\eta\beta\|u\|_{\dot B^{s+2}_{p,1}}\bigg)\cdotp
\end{split}
\end{equation}
It is now clear that taking $\eta$  small enough completes the proof of the proposition.
\end{proof}

Combining Propositions \ref{apriori:LM} and \ref{apriori:K}, one can 
now consider  the following linear system: 
\begin{equation}
\label{LM-K}
\left\{\begin{array}{l}
\partial_t u 
    -a\div \big( 2\mu D(u) + \lambda \div u \id
                                       -\pi K \id \big)= f,  \\[1ex]
\partial_t K - \div(k \nabla(c K)) = g,
\end{array}\right.
\end{equation}

\begin{prop}
\label{apriori:LM-K}
Let $1<p<2n.$ Let  $u_0\in\dot B^{\frac np-1}_{p,1},$ $K_0\in\dot B^{\frac np-2}_{p,1},$ $f\in L^1(0,T;\dot B^{\frac np-1}_{p,1})$
and $g\in L^1(0,T;\dot B^{\frac np-2}_{p,1}).$
Let $a$, $b$, $\lambda$ and $\mu$ satisfy the 
assumptions of Proposition $\ref{apriori:LM}$ 
and $k$ and $c$ satisfy those of Proposition $\ref{apriori:K}$ with $s=\frac np-2.$ 
Assume that $\pi$ belongs to the multiplier space
\footnote{
The multipler space $\mathcal{M}(\dot B^{s}_{p,1})$ is 
the set of all functions 
$f\in \dot B^{s}_{p,1}$ 
such that 
$\displaystyle 
\|f\|_{\mathcal{M}(\dot B^{s}_{p,1})}
:=\sup_{\|h\|_{\dot B^{s}_{p,1}}=1} 
         \|hf\|_{\dot B^{s}_{p,1}}<\infty$.} 
$\mathcal{M}(\dot B^{\frac np}_{p,1}).$  Finally, suppose that
$$
\lim_{m\rightarrow+\infty}
\|(\id-\dot S_m)(k\nabla c, c\nabla k,a\nabla\mu,b\nabla\lambda, 
   \mu\nabla a, \lambda\nabla b)\|_{L^\infty_T(\dot B^{\frac{n}{p}-1}_{p,1})}=0.
$$
Then System \eqref{LM-K} admits a unique solution $(u,K)$ with 
$$
u\in\cC([0,T];\dot B^{\frac np-1}_{p,1})\cap L^1(0,T;\dot B^{\frac np+1}_{p,1})\quad\hbox{and}\quad
K\in\cC([0,T];\dot B^{\frac np-2}_{p,1})\cap L^1(0,T;\dot B^{\frac np}_{p,1}).
$$
Besides, if $m$ is large enough (as in Propositions $\ref{apriori:LM}$ and $\ref{apriori:K}$)
then $(u,K)$ fulfills for all $t\in[0,T]$,
$$\displaylines{
\|K\|_{L^\infty_t(\dot B^{\frac np-2}_{p,1})}+\beta\|K\|_{L^1_t(\dot B^{\frac np}_{p,1})} \leq 
C\biggl(\|K_0\|_{\dot B^{\frac np-2}_{p,1}}+ \|g\|_{L^1_t(\dot B^{\frac np}_{p,1})}\biggr)\hfill\cr\hfill\times
\exp\biggl(\frac C\beta\int_0^t\|\dot S_m(k\nabla c,c\nabla k)\|_{\dot B^{\frac np}_{p,1}}^2\biggr),\cr
\|u\|_{L^\infty_t(\dot B^{\frac np-1}_{p,1})}+\alpha\|u\|_{L^1_t(\dot B^{\frac np+1}_{p,1})}\leq 
C\biggl(\|u_0\|_{\dot B^{\frac np-1}_{p,1}} +\|f\|_{L^1_t(\dot B^{\frac np-1}_{p,1})}
\hfill\cr\hfill+\|a\|_{\cM(\dot B^{\frac np-1}_{p,1})}\|\pi\|_{\cM(\dot B^{\frac np}_{p,1})}\|K\|_{L^1_t(\dot B^{\frac np}_{p,1})}\biggr)
\exp\biggl(\frac C\alpha\int_0^t\|\dot S_m(a\nabla\mu,b\nabla\lambda, 
   \mu\nabla a, \lambda\nabla b)\|_{\dot B^{\frac np}_{p,1}}^2\biggr)\cdotp}
$$
\end{prop}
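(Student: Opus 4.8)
The strategy is to decouple System \eqref{LM-K} in the obvious triangular fashion: the equation for $K$ is autonomous (it does not involve $u$), so one first solves it by Proposition \ref{apriori:K} with $s=\frac np-2$, which is admissible since \eqref{cond:sK} reads $-\min(n/p,n/p')-1<\frac np-2\le\frac np-2$ and the right endpoint is attained; this produces $K\in\cC([0,T];\dot B^{\frac np-2}_{p,1})\cap L^1(0,T;\dot B^{\frac np}_{p,1})$ together with the stated exponential bound. One then feeds this $K$ into the velocity equation, viewing $a\,\nabla(\pi K\,\id)$ as an additional source term $\tilde f:=f+a\div(\pi K\,\id)$ and applying Proposition \ref{apriori:LM} to $\partial_t u-a\div(2\mu D(u)+\lambda\div u\,\id)=\tilde f$. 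The only point requiring care here is that $\tilde f$ must lie in $L^1(0,T;\dot B^{\frac np-1}_{p,1})$: we have $\nabla K\in L^1(0,T;\dot B^{\frac np-1}_{p,1})$, so $\pi K\,\id$ is differentiated once, and one needs the product by $\pi$ and then by $a$ to be bounded on $\dot B^{\frac np-1}_{p,1}$. This is exactly why $\pi\in\cM(\dot B^{\frac np}_{p,1})$ (equivalently, essentially, $\cM(\dot B^{\frac np-1}_{p,1})$ after differentiation) and $a\in\cM(\dot B^{\frac np-1}_{p,1})$ are assumed; one estimates
$$\|a\div(\pi K\,\id)\|_{L^1_t(\dot B^{\frac np-1}_{p,1})}\lesssim\|a\|_{\cM(\dot B^{\frac np-1}_{p,1})}\|\pi\|_{\cM(\dot B^{\frac np}_{p,1})}\|K\|_{L^1_t(\dot B^{\frac np}_{p,1})},$$
which produces precisely the extra term in the second a priori inequality. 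Plugging this bound into the conclusion of Proposition \ref{apriori:LM} and noting that the regularity of $u$ claimed there, $u\in\cC([0,T];\dot B^{\frac np}_{p,1})$ with $\nabla u\in L^1(0,T;\dot B^{\frac np}_{p,1})$, must be shifted down by one derivative (data in $\dot B^{\frac np-1}_{p,1}$ rather than $\dot B^{\frac np}_{p,1}$) gives $u\in\cC([0,T];\dot B^{\frac np-1}_{p,1})\cap L^1(0,T;\dot B^{\frac np+1}_{p,1})$, as stated.

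For \textbf{uniqueness}, one observes that the system is linear, so it suffices to show that $f=g=0$, $u_0=K_0=0$ forces $(u,K)=0$; this follows by applying the two a priori estimates successively (first to $K$, then to $u$ with the vanishing source), since the right-hand sides are then $0$. For \textbf{existence}, as indicated in the proof of Proposition \ref{apriori:K}, one invokes the continuity method exactly as in \cite{danchin2}: one connects the operators $(a,b,\lambda,\mu,k,c)$ to constant-coefficient ones along a segment, the a priori estimates just established are uniform along this segment once $m$ is fixed large enough so that \eqref{cond:LM1}, \eqref{cond:LM2}, \eqref{cond:K1}, \eqref{cond:K2} hold (the smallness of $(\id-\dot S_m)(\cdots)$ being guaranteed by the hypothesis that these quantities tend to $0$ in $L^\infty_T(\dot B^{\frac np-1}_{p,1})$), and at the constant-coefficient endpoint the system is solved by Fourier analysis / the heat semigroup.

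The \textbf{main obstacle}, and the reason this proposition is not a trivial corollary of the two preceding ones, is the bookkeeping of the pressure coupling term $\rho_0^{-1}\nabla(\rho_0^{-1}\pi_1(\rho_0)K)$: one cannot simply absorb it into the velocity equation as a lower-order perturbation and close the estimate by a Gronwall argument on the coupled pair, because $K$ is one derivative less regular than $u$ and the gain of $L^1$-in-time smoothing for $K$ only buys back $\dot B^{\frac np}_{p,1}$, which is exactly the threshold at which $\nabla(\pi K)\in\dot B^{\frac np-1}_{p,1}$ makes sense — with no margin. This forces the triangular (rather than simultaneous) treatment and, crucially, forces the pressure term to be kept as a genuine source in the velocity equation rather than being hidden in the coefficients; the multiplier-space hypotheses on $\pi$ and $a$ are tailored precisely to make the single product estimate above work at this critical regularity. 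Everything else is a routine assembly of Propositions \ref{apriori:LM} and \ref{apriori:K}.
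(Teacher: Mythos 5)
Your proof is correct and follows essentially the same route as the paper's own (very terse) argument: solve the autonomous $K$-equation first via Proposition \ref{apriori:K} with $s=\frac np-2$, then treat $-a\nabla(\pi K)$ as an additional source in the Lam\'e equation and apply Proposition \ref{apriori:LM}, the multiplier-space assumptions on $a$ and $\pi$ being precisely what is needed to place this source in $L^1(0,T;\dot B^{\frac np-1}_{p,1})$. The elaboration of the product estimate and the remark on the borderline index $s=\frac np-2$ in \eqref{cond:sK} are accurate and make explicit what the paper leaves implicit.
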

\begin{proof}
It suffices to first solve the second equation of \eqref{LM-K} according to Proposition \ref{apriori:K}, 
then look at $u$  as the solution to 
$$
\partial_tu-a\div(2\mu D(u)+\lambda\div u\id)=f-a\nabla(\pi K).
$$
Given the assumptions on $a$ and $\pi,$ and the fact that $K$ is in $L^1(0,T;\dot B^{\frac np}_{p,1}),$
we see that $u$ may be constructed according to Proposition \ref{apriori:LM}.
\end{proof}

%%%%%%%%%%%%%%%%%%%%%%%%%%%%%%%%%%%%%%%%

\section{Proof of the main theorem}\label{s:main}

Let  $(u_L,K_L)$ be the solution  
to the linear system corresponding to the l.h.s. of \eqref{CNS-l2} with $\rho=1,$ namely 
\begin{equation*}
\begin{split}
L_1 u_L + \pi_1(1)\nabla K_L = 0, \quad u_L|_{t=0} = u_0, \\
H_1 K_L = 0, \quad  K_L|_{t=0} = K_0.
\end{split}
\end{equation*}

\subsection{The fixed point scheme}

We claim that the Banach fixed point
theorem applies to the map $\Phi$ defined in 
\eqref{map} in some closed ball $\bar B_{E_p(T)}((u_L,\theta_L),R)$
with suitably small $T$ and $R$. 
\medbreak
To justify our claim, we set  $\tilde u := u-u_L$ and $\tilde K := K-K_L,$ and observe that solving \eqref{CNS-lf} for some given  $(v,S)\in E_p(T)$
is equivalent to  solving  
\begin{equation*}
\left\{
\begin{array}{l}
L_{\rho_0} \tilde u + \rho_0^{-1}\nabla( \rho_0^{-1} \pi_1(\rho_0)\tilde K ) 
= \rho_0^{-1}\div (I_1(v,v)+I_2(v,v)+I_3(v,v)+I_4(v,\psi)) \\[1ex]
\qquad\qquad\qquad\qquad\qquad\qquad
    + (L_1-L_{\rho_0})u_L
    - \rho_0^{-1}\nabla(\rho_0^{-1}\pi_1(\rho_0)K_L)+\nabla(\pi_1(1)K_L), \\[2ex]
H_{\rho_0} \tilde K = \div \big(I_5(v,\psi) + I_6(v,\psi)
      + I_7(v,\psi)+ I_8(v,v) \big) + (H_1-H_{\rho_0})K_L.
\end{array}
\right.
\end{equation*} 
{}From  the definition of the space $\dot B^{\frac{n}{p}}_{p,1}$ (which involves a convergent series)
and the fact that it embeds  in the set of bounded continuous functions, it is clear that there exists some $m\in\Z$ so that
$$\displaylines{
\min\left( \inf_{x\in\R^n} \dot S_m (\frac{\mu(\rho_0)}{\rho_0}), \  
  \inf_{x\in\R^n}
  \dot S_m (2\frac{\mu(\rho_0)}{\rho_0}+\frac{\lambda(\rho_0)}{\rho_0}), \ 
  \inf_{x\in\R^n} \dot S_m (\frac{k(\rho_0)}{\rho_0})
 \right) \ge \frac{\max(\alpha,\beta)}{2},\cr
\bigl\|(\id-\dot S_m)\bigl(\frac{\mu(\rho_0)}{\rho_0^2}\nabla\rho_0, 
  \frac{\mu'(\rho_0)}{\rho_0}\nabla\rho_0, 
  \frac{\lambda(\rho_0)}{\rho_0^2}\nabla\rho_0, 
  \frac{\lambda'(\rho_0)}{\rho_0}\nabla\rho_0,
  \frac{k(\rho_0)}{\rho_0}\nabla\rho_0\bigr)\bigr\|_{L^\infty_T(\dot B^{\frac{n}{p}-1}_{p,1})}\hfill]\cr\hfill\le\eta\min(\alpha,\beta).}
$$
Therefore, in order to solve the above system by means of  Proposition \ref{apriori:LM-K},
it suffices to check that  the r.h.s.  of the first and  second equations are in $L^1(0,T;\dot B^{\frac{n}{p}-1}_{p,1})$ 
and $L^1(0,T;\dot B^{\frac{n}{p}-2}_{p,1})$, respectively.

\medbreak\noindent
{\sl First step: Stability of the ball 
$\bar B_{E_p(T)}((u_L,K_L),R)$ for suitably small $T$ and 
$R$. } \smallbreak
{}From now on, we assume that for a small enough $\tilde c,$ we have
\begin{equation}\label{eq:smallv}
\|Dv\|_{L^1_T(\dot B^{\frac{n}{p}}_{p,1})}\le\tilde c.
\end{equation}
Proposition \ref{apriori:LM-K} and the definition of 
the multiplier space $\mathcal{M}(\dot B^{\frac{n}{p}-1}_{p,1})$ 
ensure that 
\begin{equation}
\label{pr:apriori:LM-K}
\begin{split}
\|(\tilde u,\tilde K)\|_{E_p(T)} 
%\le C &e^{C_{\rho_0,m}T}
%\Big( \|(L_1-L_{\rho_0})u_L\|_{L^1_T(\dot B^{\frac{n}{p}-1}_{p,1})}
 % +\|(H_1-H_{\rho_0})K_L\|_{L^1_T(\dot B^{\frac{n}{p}-2}_{p,1})} \\
  %& +\|\rho_0^{-1}\nabla((\rho_0^{-1}\pi_1(\rho_0)- \pi_1(1)) K_L)
    %       \|_{L^1_T(\dot B^{\frac{n}{p}-1}_{p,1})}   \\
  %& + \|\rho_0^{-1}
     %     \div(I_1(v,v)+I_2(v,v)+I_3(v,v)+I_4(v,\psi))
     % \|_{L^1_T(\dot B^{\frac{n}{p}-1}_{p,1})}  \\
  %& + \|\div \big(I_5(v,\psi) + I_6(v,\psi)
    %  + I_7(v,\psi)+ I_8(v,v) \big)
     % \|_{L^1_T(\dot B^{\frac{n}{p}-2}_{p,1})} \Big) \\
&\le C e^{C_{\rho_0,m}T}
\Big( \|(L_1-L_{\rho_0})u_L\|_{L^1_T(\dot B^{\frac{n}{p}-1}_{p,1})}
  +\|(H_1-H_{\rho_0})K_L\|_{L^1_T(\dot B^{\frac{n}{p}-2}_{p,1})} \\ 
  & +\|(\rho_0^{-1}\pi_1(\rho_0)-\pi_1(1))K_L
        \|_{L^1_T(\dot B^{\frac{n}{p}}_{p,1})}     \\
  & +\|\rho_0^{-1}\|_{\mathcal{M}(\dot B^{\frac{n}{p}-1}_{p,1})} 
     \|I_1(v,v)+I_2(v,v)+I_3(v,v)+I_4(v,\psi)\|_{L^1_T(\dot B^{\frac{n}{p}}_{p,1})} \\
  & + \|I_5(v,\psi) + I_6(v,\psi)
      + I_7(v,\psi)+ I_8(v,v) 
      \|_{L^1_T(\dot B^{\frac{n}{p}-1}_{p,1})} \Big). \\
\end{split}
\end{equation}
We may confirm that 
$\rho_0^{-1}$ belongs to $\mathcal{M}(\dot B^{\frac{n}{p}-1}_{p,1})$
by the product estimate:
\begin{equation}
\notag
\begin{split}
\|\rho_0^{-1} h \|_{\dot B^{\frac{n}{p}-1}_{p,1}} 
\le \|(\frac{a_0}{1+a_0}-1)h\|_{\dot B^{\frac{n}{p}-1}_{p,1}}
\le (\|a_0\|_{\dot B^{\frac{n}{p}}_{p,1}}+1)\|h\|_{\dot B^{\frac{n}{p}-1}_{p,1}}.
\end{split}
\end{equation}
Likewise, 
\begin{eqnarray}\label{L1}
&\|(L_1-L_{\rho_0})u_L\|_{L^1_T(\dot B^{\frac{n}{p}-1}_{p,1})}
\le (\|a_0\|_{\dot B^{\frac{n}{p}}_{p,1}}+1)
    \|a_0\|_{\dot B^{\frac{n}{p}}_{p,1}} 
    \|Du_L\|_{L^1_T(\dot B^{\frac{n}{p}}_{p,1})},\\\label{L2}
&\|(\rho_0^{-1}\pi_1(\rho_0)-\pi_1(1)K_L)\|_{L^1_T(\dot B^{\frac{n}{p}-1}_{p,1})} \le 
    \|a_0\|_{\dot B^{\frac{n}{p}}_{p,1}}\|K_L\|_{L^1_T(\dot B^{\frac{n}{p}}_{p,1})}\\\label{L3}
\hbox{and}\quad
&\|(H_1-H_{\rho_0})K_L\|_{L^1_T(\dot B^{\frac{n}{p}-2}_{p,1})} 
\lesssim     (\|a_0\|_{\dot B^{\frac{n}{p}}_{p,1}}+1)^2
    \|a_0\|_{\dot B^{\frac{n}{p}}_{p,1}} 
    \|K_L\|_{L^1_T(\dot B^{\frac{n}{p}}_{p,1})}. 
\end{eqnarray}
%Combining \eqref{L1}, \eqref{L2} and \eqref{L3}, we have
%\begin{equation}\label{uKL}\begin{split}
%\|(L_1-L_{\rho_0})&u_L\|_{L^1_T(\dot B^{\frac{n}{p}-1}_{p,1})} 
%+\| \rho_0^{-1}\nabla(\rho_0^{-1}\pi_1(\rho_0)K_L)-\nabla(\pi_1(1)K_L) \|_{L^1_T(\dot B^{\frac{n}{p}-1}_{p,1})} \\
%&\qquad\qquad\qquad+\|(H_1-H_{\rho_0})K_L\|_{L^1_T(\dot B^{\frac{n}{p}-2}_{p,1})} \\&\lesssim 
   % (\|a_0\|_{\dot B^{\frac{n}{p}}_{p,1}}+1)     \big(\|Du_L\|_{L^1_T(\dot B^{\frac{n}{p}}_{p,1})}
   % +(\|a_0\|_{\dot B^{\frac{n}{p}}_{p,1}}+1) \|K_L\|_{L^1_T(\dot B^{\frac{n}{p}}_{p,1})} \big). 
%\end{split}\end{equation}

\noindent{\sl \underbar{Estimate of $I_1$, $I_2$, $I_3$}}~:
Terms $I_1$, $I_2$ and $I_3$ have been estimated in \cite{danchin2} as follows : 
\begin{equation*}
\|I_j(v,w)\|_{L^1_T(\dot B^{\frac{n}{p}}_{p,1})} 
\lesssim (\|a_0\|_{\dot B^{\frac{n}{p}}_{p,1}}+1) 
 \|Dv\|_{L^1_T(\dot B^{\frac{n}{p}}_{p,1})} \|Dw\|_{L^1_T(\dot B^{\frac{n}{p}}_{p,1})}\quad\hbox{for}\quad
 j=1,2,3.
\end{equation*}

\noindent{\sl \underbar{Estimate of $I_4$}}~:
Let us recall that the pressure is given by
\begin{equation}
\nonumber
P(J_v^{-1} \rho_0, \psi) 
= \pi_0(J_v^{-1} \rho_0)
 + \Bigl(\frac{1}{\rho_0} \psi
      - \frac{|v|^2}{2}\Bigr) \pi_1(J_v^{-1} \rho_0)
\end{equation}
so that $I_4$ can be written as 
$$
\displaylines{\quad I_4(v,\psi) = -\adj(DX_v) \pi_0(J_v^{-1}\rho_0) 
-\Bigl(\adj(DX_v)\frac{\pi_1(J_v^{-1}\rho_0)}{\rho_0}
                -\frac{\pi_1(\rho_0)}{\rho_0} \id\Bigr)\psi\hfill\cr\hfill
 +\adj(DX_v) \pi_1(J_v^{-1}\rho_0)\frac{|v|^2}{2}\cdotp\quad}
 $$
Let us notice that 
\begin{equation}\nonumber
J_v^{-1}\rho_0 -1 = (J_v^{-1}-1)(a_0+1)+a_0.
\end{equation}
Hence, taking advantage of \eqref{eq:smallv} and of the results of the appendix,
\begin{equation}\nonumber
\|J_v^{-1}\rho_0 -1
    \|_{L^\infty_T(\dot B^{\frac{n}{p}}_{p,1})}
\lesssim 
(\|a_0\|_{\dot B^{\frac{n}{p}}_{p,1}}+1).
\end{equation}
Next, we have
\begin{equation}\nonumber
\frac{\pi_1(J_v^{-1}\rho_0)}{\rho_0} = 
(\pi_1(J_v^{-1}\rho_0)-\pi_1(1)+\pi_1(1))(1-\frac{a_0}{a_0+1}). 
\end{equation}
Therefore, using again \eqref{eq:smallv} together with composition estimates yields
$$
\|\frac{\pi_1(J_v^{-1}\rho_0)}{\rho_0}
    \|_{\mathcal{M} (L^\infty_T(\dot B^{\frac{n}{p}}_{p,1}) )}
\lesssim (\|a_0\|_{\dot B^{\frac{n}{p}}_{p,1}}+1)^2. $$
Since 
$$\displaylines{\quad
\Bigl(\adj(DX_v)\frac{\pi_1(J_v^{-1}\rho_0)}{\rho_0}
                -\frac{\pi_1(\rho_0)}{\rho_0} \id\Bigr)\psi \hfill\cr\hfill
= (\adj(DX_v)-\id) \frac{\pi_1(J_v^{-1}\rho_0)}{\rho_0} \psi \\
 + \Bigl(\frac{\pi_1(J_v^{-1}\rho_0)}{\rho_0} -\frac{\pi_1(\rho_0)}{\rho_0}\Bigr) \psi \id\quad}
$$
and 
\begin{equation}\nonumber
\frac{\pi_1(J_v^{-1}\rho_0)}{\rho_0} 
- \frac{\pi_1(\rho_0)}{\rho_0}
=(\pi_1(J_v^{-1}\rho_0)-\pi_1(\rho_0)) (1-\frac{a_0}{a_0+1}), 
\end{equation}
we conclude that 
$$
\|(\adj(DX_v)\frac{\pi_1(J_v^{-1}\rho_0)}{\rho_0}
   -\frac{\pi_1(\rho_0)}{\rho_0}) \psi \id \|_{L^1_T(\dot B^{\frac{n}{p}}_{p,1})} \lesssim 
(\|a_0\|_{\dot B^{\frac{n}{p}}_{p,1}}+1)^2
\|Dv\|_{L^1_T(\dot B^{\frac{n}{p}}_{p,1})}
\|\psi\|_{L^1_T(\dot B^{\frac{n}{p}}_{p,1})}.
$$
Finally, by Proposition \ref{prop:prd} and \ref{prop:comp}. 
$$
\|\frac{|v|^2}{2}\pi_1(J_v^{-1}\rho_0)\|_{L^1_T(\dot B^{\frac{n}{p}}_{p,1})}
\lesssim (1+\|a_0\|_{\dot B^{\frac{n}{p}}_{p,1}})\|v\|_{L^2_T(\dot B^{\frac{n}{p}}_{p,1})}^2.
$$
Therefore, using the hypothesis that $\pi_0(1)=0$, we have
\begin{equation*}
\begin{split}
\|I_4(v,\psi)&\|_{L^1_T(\dot B^{\frac{n}{p}}_{p,1})} 
\lesssim 
T \|\pi_0(J_v^{-1}\rho_0) \|_{L^\infty_T(\dot B^{\frac{n}{p}}_{p,1})}  
   \!+\!\|(\adj(DX_v)\frac{\pi_1(J_v^{-1}\rho_0)}{\rho_0}
   -\frac{\pi_1(\rho_0)}{\rho_0}\id) \psi  \|_{L^1_T(\dot B^{\frac{n}{p}}_{p,1})} \\
&\qquad\qquad\qquad\qquad\qquad\qquad
   +\|\frac{|v|^2}{2}\pi_1(J_v^{-1}\rho_0)\|_{L^1_T(\dot B^{\frac{n}{p}}_{p,1})} \\
&\lesssim 
(\|a_0\|_{\dot B^{\frac{n}{p}}_{p,1}}+1)
\big(T + (\|a_0\|_{\dot B^{\frac{n}{p}}_{p,1}}+1)
          \|\psi\|_{L^1_T(\dot B^{\frac{n}{p}}_{p,1})} 
          \|Dv\|_{L^1_T(\dot B^{\frac{n}{p}}_{p,1})} +\|v\|_{L^2_T(\dot B^{\frac{n}{p}}_{p,1})}^2)  \big). 
\end{split}
\end{equation*}

\noindent{\sl \underbar{Estimate of $I_5$}}~:
We can write the term $I_5$ as 
\begin{equation*}
\begin{split}
I_5(v,\psi) 
&= ( k(J^{-1}\rho_0)\adj(DX_v){}^t\!A_v 
           - k(\rho_0)\id ) \nabla (\frac{\psi}{\rho_0})\\
&= \big( ( k(J_v^{-1}\rho_0)\!-\!k(\rho_0) )\bigl(\id\!+\!(\adj(DX_v){}^t\!A_v\!-\!\id)\bigr) 
         \!+\! k(\rho_0)(\adj(DX_v){}^t\!A_v\! -\!\id) \big) \nabla(\frac{\psi}{\rho_0})\cdotp
\end{split}
\end{equation*}
Note that 
\begin{equation}\nonumber
\adj(DX_v)^t\!A_v -\id = (\adj(DX_v)-\id)(^t\!A_v-\id)+(\adj(DX_v)-\id)+(^t\!A_v-\id),
\end{equation}
and hence, we have according to Proposition \ref{prop:flow1}
\begin{equation}\nonumber
\begin{split}
\|\adj(DX_v)^t\!A_v -\id\|_{L^\infty_T(\dot B^{\frac{n}{p}}_{p,1})} 
&\lesssim \|\adj(DX_v)-\id\|_{L^\infty_T(\dot B^{\frac{n}{p}}_{p,1})}
   \|^t\!A_v-\id\|_{L^\infty_T(\dot B^{\frac{n}{p}}_{p,1})} \\
&\qquad\qquad
   +\|\adj(DX_v)-\id\|_{L^\infty_T(\dot B^{\frac{n}{p}}_{p,1})}
   +\|^t\!A_v-\id\|_{L^\infty_T(\dot B^{\frac{n}{p}}_{p,1})} \\
&\lesssim \|Dv\|_{L^1_T(\dot B^{\frac{n}{p}}_{p,1})}^2
   +2\|Dv\|_{L^1_T(\dot B^{\frac{n}{p}}_{p,1})}\\
&\lesssim \|Dv\|_{L^1_T(\dot B^{\frac{n}{p}}_{p,1})}.
\end{split}
\end{equation}
Next, we have 
\begin{equation*}
\begin{split}
k(J_v^{-1}\rho_0)-k(\rho_0)
%&= k(J_v^{-1}(a_0+1))-k(a_0+1) \\
%&= k((J_v^{-1}-1)a_0+a_0+1)-k(a_0+1) \\
&= \int_0^1 k'((J_v^{-1}-1)a_0\tau+a_0+1) d\tau \times (J_v^{-1}-1)a_0.\\
%&= \Big\{\int_0^1 (k'((J_v^{-1}-1)a_0\tau+a_0+1) -k'(1)) d\tau +k'(1) \Big\}\times (J_v^{-1}-1)a_0;\\
\end{split}
\end{equation*}
Hence thanks to Propositions \ref{prop:prd} and \ref{prop:comp}, and to \eqref{eq:smallv}, we have
\begin{equation*}
\begin{split}
\|k(J_v^{-1}\rho_0)-k(\rho_0)\|_{L^\infty_T(\dot B^{\frac{n}{p}}_{p,1})}
&\lesssim
(\|(J_v^{-1}-1)a_0+a_0\|_{L^\infty_T(\dot B^{\frac{n}{p}}_{p,1})}+1)
\|(J_v^{-1}-1)a_0\|_{L^\infty_T(\dot B^{\frac{n}{p}}_{p,1})} \\
%&\lesssim\big( (\|Dv\|_{L^1_T(\dot B^{\frac{n}{p}}_{p,1})}+1)\|a_0\|_{\dot B^{\frac{n}{p}}_{p,1}}+1\big)
%\|Dv\|_{L^1_T(\dot B^{\frac{n}{p}}_{p,1})}\|a_0\|_{\dot B^{\frac{n}{p}}_{p,1}} \\
&\lesssim(\|a_0\|_{\dot B^{\frac{n}{p}}_{p,1}}+1)
\|a_0\|_{\dot B^{\frac{n}{p}}_{p,1}}
\|Dv\|_{L^1_T(\dot B^{\frac{n}{p}}_{p,1})}.
\end{split}
\end{equation*}
Note also that
$$
\|k(\rho_0)\|_{\cM(\dot B^{\frac{n}{p}}_{p,1})}
\lesssim\|a_0\|_{\dot B^{\frac{n}{p}}_{p,1}}+1.
$$
Therefore, 
\begin{equation*}
\begin{array}{rl}
\|&\!\!\!\!\!I_5(v,\psi) \|_{L^1_T(\dot B^{\frac{n}{p}-1}_{p,1})} \\
&\lesssim 
\| k(J^{-1}\rho_0)-k(\rho_0) \adj(DX_v)^t\!A_v\nabla(\frac{\psi}{\rho_0})
               \|_{L^1_T(\dot B^{\frac{n}{p}-1}_{p,1})} \\
& \qquad 
+\|(\adj(DX_v)^t\!A_v-\id) k(\rho_0) \nabla(\frac{\psi}{\rho_0})
            \|_{L^1_T(\dot B^{\frac{n}{p}-1}_{p,1})} \\
&\lesssim 
(\|a_0\|_{\dot B^{\frac{n}{p}}_{p,1})}\!+\!1)
\|k(J^{-1}\rho_0)-k(\rho_0)\|_{L^\infty_T(\dot B^{\frac{n}{p}}_{p,1})}
  \|\adj(DX_v)^t\!A_v\|_{L^\infty_T(\dot B^{\frac{n}{p}}_{p,1})}
  \| \nabla(\frac{\psi}{\rho_0})\|_{L^1_T(\dot B^{\frac{n}{p}-1}_{p,1})} \\
& \qquad 
+\|\adj(DX_v)^t\!A_v-\id\|_{L^\infty_T(\dot B^{\frac{n}{p}}_{p,1})}
\|k(\rho_0)\|_{L^\infty_T(\mathcal{M}(\dot B^{\frac{n}{p}-1}_{p,1}))}
\|\nabla(\frac{\psi}{\rho_0})\|_{L^1_T(\dot B^{\frac{n}{p}-1}_{p,1})} \\
&\lesssim 
(1+\|a_0\|_{\dot B^{\frac{n}{p}}_{p,1})})^3\|Dv\|_{L^1_T(\dot B^{\frac{n}{p}}_{p,1})}
  \|\psi \|_{L^1_T(\dot B^{\frac{n}{p}}_{p,1})}. \\
\end{array}
\end{equation*}

\noindent{\sl \underbar{Estimate of $I_6$}}~:
%We will estimate 
%\begin{equation}
%\nonumber
%\begin{split}
%I_6(v,\psi) &= k(J_v^{-1}\rho_0)\adj(DX_v)^t\!A_v \nabla (\frac{|v|^2}{2}). \\
%\end{split}
%\end{equation} 
Owing to \eqref{eq:smallv} and to Proposition \ref{prop:flow1}, we have 
$$
\|k(J_v^{-1}\rho_0)\adj(DX_v)^t\!A_v
    \|_{L^\infty_T(\mathcal{M}(\dot B^{\frac{n}{p}-1}_{p,1}))}
\lesssim \|a_0\|_{\dot B^{\frac{n}{p}}_{p,1}}+1.
$$
Therefore, 
$$\begin{array}{lll}\|I_6(v,\psi)\|_{L^1_T(\dot B^{\frac{n}{p}-1}_{p,1})} &\!\!\!\lesssim\!\!\!& 
\|k(J^{-1}\rho_0)\adj(DX_v)^t\!A_v\|_{L^\infty_T(\mathcal{M}(\dot B^{\frac{n}{p}-1}_{p,1}))}
\||v|^2\|_{L^1_T(\dot B^{\frac{n}{p}}_{p,1})} \\[1.5ex]
&\!\!\!\lesssim\!\!\!&(\|a_0\|_{\dot B^{\frac{n}{p}}_{p,1}}+1)
\|v\|_{L^2_T(\dot B^{\frac{n}{p}}_{p,1})}^2.\end{array}$$

\noindent{\sl \underbar{Estimate of $I_7$}}~:
Recall that
$P(J_v^{-1}\rho_0,K)=\pi_0(J^{-1} \rho_0)
 + (\frac{1}{\rho_0} \psi- \frac{|v|^2}{2}) \pi_1(J^{-1} \rho_0)$. Hence
$$
I_7(v,\psi) = \adj(DX_v)
\Big( v \pi_0(J_v^{-1} \rho_0)
 + v\big(\frac{1}{\rho_0} \psi
      - \frac{|v|^2}{2}\big) \pi_1(J^{-1} \rho_0) \Big)\cdotp
$$
We already proved that if \eqref{eq:smallv} is satisfied and $1\leq p<2n$ then
\begin{equation}\nonumber
\begin{split}
&\|\adj(DX_v)\|_{L^\infty_T(\mathcal{M}(\dot B^{\frac{n}{p}-1}_{p,1}))}
\lesssim \|Dv\|_{L^1_T(\dot B^{\frac{n}{p}}_{p,1})}+1, \\
&\|\rho_0^{-1}\|_{L^\infty_T(\mathcal{M}(\dot B^{\frac{n}{p}-1}_{p,1}))}
\lesssim \|a_0\|_{\dot B^{\frac{n}{p}}_{p,1}}+1, \\ 
\text{and} \quad 
&\|\pi_1(J^{-1} \rho_0)\|_{L^\infty_T(\mathcal{M}(\dot B^{\frac{n}{p}-1}_{p,1}))}
\lesssim \|a_0\|_{\dot B^{\frac{n}{p}}_{p,1}}+1.
\end{split}
\end{equation}
Therefore, by Proposition \ref{prop:prd}, we have 
\begin{equation*}
\begin{split}
\|I_7(v,&\psi)\|_{L^1_T(\dot B^{\frac{n}{p}-1}_{p,1})} \\
&\lesssim 
\|v \pi_0(J^{-1} \rho_0)\|_{L^1_T(\dot B^{\frac{n}{p}-1}_{p,1})}
+(\|a_0\|_{\dot B^{\frac{n}{p}}_{p,1}}+1)
\|v \psi\|_{L^1_T(\dot B^{\frac{n}{p}-1}_{p,1})} \\
&\qquad\qquad
+(\|a_0\|_{\dot B^{\frac{n}{p}}_{p,1}}+1)\|v|v|^2\|_{L^1_T(\dot B^{\frac{n}{p}-1}_{p,1})} \\
&\lesssim 
\|\pi_0(J^{-1} \rho_0)\|_{L^\infty_T(\dot B^{\frac{n}{p}}_{p,1})}
T\|v\|_{L^\infty_T(\dot B^{\frac{n}{p}-1}_{p,1})} 
+(\|a_0\|_{\dot B^{\frac{n}{p}}_{p,1}}+1)
\|v\|_{L^2_T(\dot B^{\frac{n}{p}}_{p,1})} 
\|\psi\|_{L^2_T(\dot B^{\frac{n}{p}-1}_{p,1})} \\
&\qquad\qquad
+(\|a_0\|_{\dot B^{\frac{n}{p}}_{p,1}}+1)
\|v\|_{L^\infty_T(\dot B^{\frac{n}{p}-1}_{p,1})} 
\||v|^2\|_{L^1_T(\dot B^{\frac{n}{p}}_{p,1})} \\
&\lesssim 
(\|a_0\|_{\dot B^{\frac{n}{p}}_{p,1}}+1) 
\big(
T\|v\|_{L^\infty_T(\dot B^{\frac{n}{p}-1}_{p,1})} 
+\|v\|_{L^2_T(\dot B^{\frac{n}{p}}_{p,1})} 
\|\psi\|_{L^2_T(\dot B^{\frac{n}{p}-1}_{p,1})} \\
&\qquad\qquad\qquad\qquad\qquad\qquad\qquad\qquad
+\|v\|_{L^\infty_T(\dot B^{\frac{n}{p}-1}_{p,1})} 
\|v\|_{L^2_T(\dot B^{\frac{n}{p}}_{p,1})}^2
\big). \\
\end{split}
\end{equation*}

\noindent{\sl \underbar{Estimate of $I_8$}}~: Recall that
$$
I_8(v,w) = \adj(DX_v) \big(\lambda(J_{v}^{-1}\rho_0)^t\!A_v : \nabla w\ \id 
                    + \mu(J_{v}^{-1}\rho_0) (Dw \cdot A_v + {}^t\!A_v\cdot\nabla w) \big) \cdot w.
$$
{}From the previous computations, we know that for any smooth enough function $z,$
$$
\|\adj(DX_v)z(J_{v}^{-1}\rho_0)^t\!A_v
\|_{L^\infty_T(\mathcal{M}(\dot B^{\frac{n}{p}-1}_{p,1}))}
\lesssim1+\|a_0\|_{\dot B^{\frac{n}{p}}_{p,1}}.
$$
Therefore, we have by Proposition \ref{prop:prd}
$$
\|I_8(v,v)\|_{L^1_T(\dot B^{\frac{n}{p}-1}_{p,1})} 
\lesssim (1+\|a_0\|_{\dot B^{\frac{n}{p}}_{p,1}})
\|v\|_{L^\infty_T(\dot B^{\frac{n}{p}}_{p,1})}^2.
$$
In summary, we have that 
\begin{multline}\label{I1-4}
\|I_1(v,v)+I_2(v,v)+I_3(v,v)+I_4(v,\psi)
  \|_{L^1_T(\dot B^{\frac{n}{p}}_{p,1})} 
\lesssim(\|a_0\|_{\dot B^{\frac{n}{p}}_{p,1}}+1)^2\\\times
\big(T+ \|Dv\|_{L^1_T(\dot B^{\frac{n}{p}}_{p,1})}^2 
  +\|a_0\|_{\dot B^{\frac{n}{p}}_{p,1}} 
     \|v\|_{L^2_T(\dot B^{\frac{n}{p}}_{p,1})}^2
  + \|\psi\|_{L^1_T(\dot B^{\frac{n}{p}}_{p,1})} 
          \|Dv\|_{L^1_T(\dot B^{\frac{n}{p}}_{p,1})}\big)
\end{multline}
and that
\begin{multline}\label{I5-8}
\|I_5(v,\psi) + I_6(v,\psi)
      + I_7(v,\psi)+ I_8(v,v) 
  \|_{L^1_T(\dot B^{\frac{n}{p}-1}_{p,1})}\\
\lesssim
(\|a_0\|_{\dot B^{\frac{n}{p}}_{p,1}}+1)^3
\big(
T\|v\|_{L^\infty_T(\dot B^{\frac{n}{p}-1}_{p,1})} 
+ \|Dv\|_{L^1_T(\dot B^{\frac{n}{p}}_{p,1})}
  \|\psi \|_{L^1_T(\dot B^{\frac{n}{p}}_{p,1})} \\
+(\|v\|_{L^\infty_T(\dot B^{\frac{n}{p}-1}_{p,1})}+1)
     \|v\|_{L^2_T(\dot B^{\frac{n}{p}}_{p,1})}^2
 +\|v\|_{L^2_T(\dot B^{\frac{n}{p}}_{p,1})} 
   \|\psi\|_{L^2_T(\dot B^{\frac{n}{p}-1}_{p,1})} \big).
\end{multline}

Plugging inequalities \eqref{I1-4} and \eqref{I5-8} 
into \eqref{pr:apriori:LM-K}, 
we obtain
\begin{equation}
\nonumber
\begin{split}
&\|(\tilde u,\tilde K)\|_{E_p(T)} \\
&\le C e^{C_{\rho,m}T}
(\|a_0\|_{\dot B^{\frac{n}{p}}_{p,1}}+1)^3
\Big(\|Du_L\|_{L^1_T(\dot B^{\frac{n}{p}}_{p,1})} 
 + \|K_L\|_{L^1_T(\dot B^{\frac{n}{p}}_{p,1})} \\
&\qquad\qquad\qquad\quad
+ T(\|v\|_{L^\infty_T(\dot B^{\frac{n}{p}-1}_{p,1})}+1)
+ \|Dv\|_{L^1_T(\dot B^{\frac{n}{p}}_{p,1})}^2 
+ \|Dv\|_{L^1_T(\dot B^{\frac{n}{p}}_{p,1})}
   \|\psi \|_{L^1_T(\dot B^{\frac{n}{p}}_{p,1})} \\
&\qquad\qquad\qquad\quad
  +(\|v\|_{L^\infty_T(\dot B^{\frac{n}{p}-1}_{p,1})}+1)
   \|v\|_{L^2_T(\dot B^{\frac{n}{p}}_{p,1})}^2
  +\|v\|_{L^2_T(\dot B^{\frac{n}{p}}_{p,1})} 
\|\psi\|_{L^2_T(\dot B^{\frac{n}{p}-1}_{p,1})} \Big). \\
\end{split}
\end{equation}

Note that by the linear parabolic estimate, we have 
$\|u_L\|_{L^\infty_T(\dot B^{\frac{n}{p}-1}_{p,1})}
\le \|u_0\|_{\dot B^{\frac{n}{p}-1}_{p,1}}$. 
Since $(v,\psi)$ belongs to the ball 
$\bar B_{E_p(T)}((u_L, K_L),R)$,
decomposing $v$ into $\tilde v + u_L$ 
and $\psi$ into $\tilde \psi + K_L$
gives us
\begin{equation}
\nonumber
\begin{split}
&\|(\tilde u,\tilde K)\|_{E_p(T)} 
\le C e^{C_{\rho,m}T}
(\|a_0\|_{\dot B^{\frac{n}{p}}_{p,1}}+1)^3
\Big(\|Du_L\|_{L^1_T(\dot B^{\frac{n}{p}}_{p,1})} 
 + \|K_L\|_{L^1_T(\dot B^{\frac{n}{p}}_{p,1})} \\
&\qquad\qquad\qquad\quad
+ T(\|u_0\|_{\dot B^{\frac{n}{p}-1}_{p,1}}+R+1)
+ \|Du_L\|_{L^1_T(\dot B^{\frac{n}{p}}_{p,1})}^2+R^2 \\
&\qquad\qquad\qquad\quad
+ (\|Du_L\|_{L^1_T(\dot B^{\frac{n}{p}}_{p,1})}+R)
  (\|K_L\|_{L^1_T(\dot B^{\frac{n}{p}}_{p,1})}+R) \\
&\qquad\qquad\qquad\quad
  +(\|u_0\|_{\dot B^{\frac{n}{p}-1}_{p,1}} +R+1)
   \|u_L\|_{L^2_T(\dot B^{\frac{n}{p}}_{p,1})}^2+R^2 \\
&\qquad\qquad\qquad\quad
  +(\|u_L\|_{L^2_T(\dot B^{\frac{n}{p}}_{p,1})}+R) 
   (\|K_L\|_{L^2_T(\dot B^{\frac{n}{p}-1}_{p,1})}+R) \Big) \\
&\le C e^{C_{\rho,m}T}
(\|a_0\|_{\dot B^{\frac{n}{p}}_{p,1}}+1)^3
(\|u_0\|_{\dot B^{\frac{n}{p}-1}_{p,1}}+1+R) \Big(\|Du_L\|_{L^1_T(\dot B^{\frac{n}{p}}_{p,1})} 
 + \|K_L\|_{L^1_T(\dot B^{\frac{n}{p}}_{p,1})}\\& 
\qquad\qquad+ T + \|Du_L\|_{L^1_T(\dot B^{\frac{n}{p}}_{p,1})}^2+ \|K_L\|_{L^1_T(\dot B^{\frac{n}{p}}_{p,1})}^2 
  +\|u_L\|_{L^2_T(\dot B^{\frac{n}{p}}_{p,1})}^2 +\|K_L\|_{L^2_T(\dot B^{\frac{n}{p}-1}_{p,1})}^2+R^2\Big). 
\end{split}
\end{equation}

We first choose $R$ so that for a small enough 
constant $\eta$, 
\begin{equation}
\label{eta}
2C(\|a_0\|_{\dot B^{\frac{n}{p}}_{p,1}}+1)^3
(\|u_0\|_{\dot B^{\frac{n}{p}-1}_{p,1}}+1) R\le \eta
\end{equation}
and take $T$ so that 
\begin{eqnarray}\label{TR1}
& C_{\rho_0,m} T \le \log 2,
\quad T \le R^2,
\quad \|Du_L\|_{L^1_T(\dot B^{\frac{n}{p}}_{p,1})} \le R, \\
&\quad \|K_L\|_{L^1_T(\dot B^{\frac{n}{p}}_{p,1})} \le R, 
\quad \|u_L\|_{L^2_T(\dot B^{\frac{n}{p}}_{p,1})} \le R, 
\quad \|K_L\|_{L^2_T(\dot B^{\frac{n}{p}-1}_{p,1})}\le R,
\end{eqnarray}
then we may conclude that $\Phi$ is a self-map on 
the ball $\bar B_{E_p(T)}((u_L,K_L),R)$.

%%%%%%%%%%%%%%%%%%%%%%

\medbreak\noindent{\sl Second step : Contraction estimate.}
\smallbreak
We set $u_j:=\Phi_1(v_j, \psi_j)$, 
$K_j:=\Phi_2(v_j, \psi_j)$ for $j=1,2,$ 
and $\du := u_2 - u_1$ and $\dK := K_2 - K_1$. To simplify the notation, we set $X_i:=X_{v_i},$ $A_i:=A_{v_i}$ and $J_i:=J_{v_i}.$

In order to prove that $\Phi$ is contractive, 
it is just a matter of applying Proposition \ref{apriori:LM-K} to the system fulfilled by $(\du,\dK),$ namely
$$
\left\{
\begin{array}{l}
L_{\rho_0} \du
+ \rho_0^{-1}\nabla( \rho_0^{-1} \pi_1(\rho_0) \dK) \\[1ex]
\qquad = \rho_0^{-1}\div\big( \sum_{j=1}^3(I_j(v_2,v_2)-I_j(v_1,v_1)) 
     +(I_4(v_2,\psi_2)-I_4(v_1,\psi_1)) \big), \\[2ex]
H_{\rho_0} \dK = \div \big(\sum_{j=1}^3 (I_j(v_2,\psi_2)- I_j(v_1, \psi_1))
  + (I_8(v_2,v_2)- I_8(v_1,v_1)) \big).
\end{array}
\right.
$$ 
%where,\begin{equation}\label{terms}\begin{split}
%&I_1(v,w) := (\adj(DX_v)- \id)\left(\mu(J_v^{-1}\rho_0)(DwA_v +^tA_v \nabla w) 
 %                         + \lambda(J_v^{-1}\rho_0)(^tA_v:\nabla w)\id\right), \\
%&I_2(v,w) := (\mu(J_v^{-1}\rho_0)-\mu(\rho_0))(DwA_v+^tA_v \nabla w)
 %             + (\lambda(J_v^{-1}\rho_0)-\lambda(\rho_0))(^tA_v:\nabla w)\id, \\
%&I_3(v,w) := \mu(\rho_0)(Dw(A_v-\id)+^t(A_v-\id) \nabla w)
 %             + \lambda(\rho_0)(^t(A_v-\id):\nabla w)\id, \\
%&I_4(v,\psi) := -\adj(DX_v) P(J_v^{-1}\rho_0, \psi) 
 %               + \frac{\pi_1(\rho_0)}{\rho_0}\psi \id, \\
%&I_5(v,\psi) := ( k(J_v^{-1}\rho_0)\adj(DX_v)A_v^t - k(\rho_0)\id )
 %            \nabla(\frac{\psi}{\rho_0}), \\
%&I_6(v,\psi) := k(J_v^{-1}\rho_0)\adj(DX_v)A_v^t \nabla (\frac{|v|^2}{2}), \\
%&I_7(v,\psi) := \adj(DX_v) v P(J_v^{-1}\rho_0, \psi), \\\text{and} \ 
%&I_8(v,w) := \adj(DX_v) \big(\lambda(J_v^{-1}\rho_0)\div_{A_v} w \ \id 
 %                   + 2\mu(J_v^{-1}\rho_0) D_{A_v}(w) \big) \cdot w. \\ 
%\end{split}\end{equation}
Taking advantage of the computations in \cite{danchin2}, we get for $j=1,2,3,$
$$
\|I_j(v_2,v_2)-I_j(v_1,v_1) \|_{L^1_T(\dot B^{\frac{n}{p}}_{p,1})}
\leq C_{\rho_0} \|(Dv_1,Dv_2)\|_{L^1_T(\dot B^{\frac{n}{p}}_{p,1})}
\|D\dv\|_{L^1_T(\dot B^{\frac{n}{p}}_{p,1})}. 
$$
Concerning the pressure term, a straightforward 
calculation based on Proposition \ref{prop:flow2} ensures that for some constant $C_{\rho_0}$ depending only on $\rho_0,$ $n$ and $p,$
$$
\|I_4(v_2,\psi_2)-I_4(v_1,\psi_1) \|_{L^1_T(\dot B^{\frac{n}{p}}_{p,1})}
\leq C_{\rho_0} \|(Dv_1,Dv_2)\|_{L^1_T(\dot B^{\frac{n}{p}}_{p,1})}
\|(\dv,\dpsi)\|_{E_p(T)}. $$
Indeed: 
$$\displaylines{
I_4(v,\psi) = -\adj(DX_v) \pi_0(J_v^{-1}\rho_0) 
-(\adj(DX_v)\frac{\pi_1(J_v^{-1}\rho_0)}{\rho_0}
                -\frac{\pi_1(\rho_0)}{\rho_0}) \psi \id\hfill\cr\hfill
 +\adj(DX_v) \pi_1(J_v^{-1}\rho_0)\frac{|v|^2}{2}\cdotp}
 $$
 Hence
$$\displaylines{
I_4(v_2,\psi_2)-I_4(v_1,\psi_1) =- \big( \adj(DX_{2}) \pi_0(J_{2}^{-1}\rho_0) 
          - \adj(DX_{1}) \pi_0(J_{1}^{-1}\rho_0) \big) \hfill\cr\hfill
- \big( \adj(DX_{2})\frac{\pi_1(J_{2}^{-1}\rho_0)}{\rho_0}\psi_2 
         - \adj(DX_{1})\frac{\pi_1(J_{1}^{-1}\rho_0)}{\rho_0} \psi_1 \big)
 + \frac{\pi_1(\rho_0)}{\rho_0} \dpsi \, \id\,  \hfill\cr\hfill
 +\frac12\Bigl( \adj(DX_{2}) \pi_1(J_{2}^{-1}\rho_0)
  - \adj(DX_{1}) \pi_1(J_{1}^{-1}\rho_0)\Bigr)|v_1|^2
  +\frac12\adj(DX_2)\pi_1(J_2^{-1}\rho_0)\dv\cdot(v_2+v_1). }
$$
Now we have, for the first term of the above equality, 
\begin{equation}
\nonumber
\begin{split}
\|\adj(DX_{2}) &\pi_0(J_{2}^{-1}\rho_0) 
    - \adj(DX_{1}) \pi_0(J_{1}^{-1}\rho_0) 
  \|_{L^1_T(\dot B^{\frac{n}{p}}_{p,1})} \\
&\lesssim
\|\adj(DX_{2}) (\pi_0(J_{2}^{-1}\rho_0) - \pi_0(J_{1}^{-1}\rho_0) )
\|_{L^1_T(\dot B^{\frac{n}{p}}_{p,1})} \\
&\qquad\qquad
+\|(\adj(DX_{2}) - \adj(DX_{1}) ) \pi_0(J_{1}^{-1}\rho_0) 
 \|_{L^1_T(\dot B^{\frac{n}{p}}_{p,1})} \\
&\leq C_{\rho_0} T \|(Dv_1,Dv_2)\|_{L^1_T(\dot B^{\frac{n}{p}}_{p,1})}
\|D\dv\|_{L^1_T(\dot B^{\frac{n}{p}}_{p,1})}. 
\end{split}
\end{equation}
For the second and third terms, it is easily obtained that 
\begin{equation}
\nonumber
\begin{split}
\|\big( \adj(DX_{2})&\frac{\pi_1(J_{2}^{-1}\rho_0)}{\rho_0}\psi_2 
         - \adj(DX_{1})\frac{\pi_1(J_{1}^{-1}\rho_0)}{\rho_0} \psi_1 \big)
 - \frac{\pi_1(\rho_0)}{\rho_0} \,\dpsi\, \id
  \|_{L^1_T(\dot B^{\frac{n}{p}}_{p,1})} \\
&\lesssim
\|\big(\adj(DX_{2}) - \adj(DX_{1}) \big)
  \frac{\pi_1(J_{2}^{-1}\rho_0)}{\rho_0}\psi_2 
\|_{L^1_T(\dot B^{\frac{n}{p}}_{p,1})} \\
&\quad
+\|\adj(DX_{1})\big( \frac{\pi_1(J_{2}^{-1}\rho_0)}{\rho_0}
      -\frac{\pi_1(J_{1}^{-1}\rho_0)}{\rho_0} \big)\psi_2 
 \|_{L^1_T(\dot B^{\frac{n}{p}}_{p,1})} \\
&\quad
+\|\Big(\adj(DX_{1}) \frac{\pi_1(J_{1}^{-1}\rho_0)}{\rho_0} 
   -\frac{\pi_1(\rho_0)}{\rho_0} \id \Big)\dpsi 
 \|_{L^1_T(\dot B^{\frac{n}{p}}_{p,1})} \\
&\leq C_{\rho_0} \|(Dv_1,Dv_2,\psi_2)\|_{L^1_T(\dot B^{\frac{n}{p}}_{p,1})}
\|(D\dv, \dpsi)\|_{L^1_T(\dot B^{\frac{n}{p}}_{p,1})}. 
\end{split}
\end{equation}
For the last terms, in the same manner as above, we may check that 
$$
\displaylines{
\!\|( \adj(DX_{2}) \pi_1(J_{2}^{-1}\rho_0)- \adj(DX_{1}) \pi_1(J_{1}^{-1}\rho_0))|v_1|^2\|_{L_T^1(\dot B^{\frac np}_{p,1})}
\!\leq C_{\!\rho_0} \|D\dv\|_{L^1_T(\dot B^{\frac{n}{p}}_{p,1})}
\|v_1\|_{L_T^2(\dot B^{\frac np}_{p,1})}^2\cr
\|\adj(DX_{2}) \pi_1(J_{2}^{-1}\rho_0)\: \dv\cdot (v_2+v_1)\|_{L_T^1(\dot B^{\frac np}_{p,1})}
\leq C_{\rho_0} \|\dv\|_{L^2_T(\dot B^{\frac{n}{p}}_{p,1})}
\|v_1+v_2\|_{L_T^2(\dot B^{\frac np}_{p,1})}.}
$$
Finally, to handle the terms $I_5,$ $I_6,$ $I_7$ and $I_8,$ we use the following decompositions:
\begin{equation}
\nonumber
\begin{split}
I_5(v_2,\psi_2) - I_5(v_1,\psi_1) 
&= \big( k(J_{2}^{-1}\rho_0)-k(J_{1}^{-1}\rho_0) \big) 
   \adj(DX_{2})^t\!A_{2}\nabla(\frac{\psi_2}{\rho_0}) \\
&\qquad
 + k(J_{v_1}^{-1}\rho_0) ( \adj(DX_{2})-\adj(DX_{1}) )
   ^t\!A_{2}\nabla(\frac{\psi_2}{\rho_0}) \\
&\qquad
 + k(J_{1}^{-1}\rho_0)\adj(DX_{1})
 ^t\!(A_{2}-A_{1})\nabla(\frac{\psi_2}{\rho_0}) \\
&\qquad
 + \big(k(J_{v_1}^{-1}\rho_0)\adj(DX_{1})^t\!A_{1} - \id \big)
    \nabla(\frac{\dpsi}{\rho_0}),
\end{split}
\end{equation}
\begin{equation}
\nonumber
\begin{split}
I_6(v_2,\psi_2) - I_6(v_1,\psi_1) 
&= (k(J_{2}^{-1}\rho_0) - k(J_{1}^{-1}\rho_0) ) 
    \adj(DX_{2}){}^t\!A_{2} \nabla (\frac{|v_2|^2}{2})\\
&\qquad  + k(J_{1}^{-1}\rho_0)(\adj(DX_{2})-\adj(DX_{1}))
    {}^t\!A_{2} \nabla (\frac{|v_2|^2}{2})\\
&\qquad
  + k(J_{1}^{-1}\rho_0)\adj(DX_{1})
   {}^t\!(A_{2} -A_{1})\nabla (\frac{|v_2|^2}{2})\\
&\qquad
  + k(J_{1}^{-1}\rho_0)\adj(DX_{1}){}^t\!A_{1} 
    \nabla (\frac{|v_2|^2}{2} - \frac{|v_1|^2}{2}),
\end{split}
\end{equation}

$$\displaylines{
I_7(v_2,\psi_2) - I_7(v_1,\psi_1) = \big( \adj(DX_{2}) v_2   - \adj(DX_{1}) v_1 \big)
   \big( \pi_0(J_{2}^{-1} \rho_0)
         + \frac{\psi_2}{\rho_0}-\frac{|v_2|^2}{2}) \pi_1(J_{2}^{-1}\rho_0) \big) \hfill\cr\hfill
  + \adj(DX_{1}) v_1 
    \big( \pi_0(J_{2}^{-1} \rho_0) - \pi_0(J_{1}^{-1} \rho_0) \big) \hfill\cr\hfill 
  + \adj(DX_{1}) v_1      \big( (\frac{\psi_2}{\rho_0}-\frac{|v_2|^2}{2}) \pi_1(J_{2}^{-1}\rho_0) 
         - (\frac{\psi)1}{\rho_0} - \frac{|v_1|^2}{2}) \pi_1(J_{1}^{-1} \rho_0) \big),}
             $$
$$
\displaylines{
I_8(v_2,\psi_2) - I_8(v_1,\psi_1) 
= \adj(DX_{2}) 
   \big(\lambda(J_{2}^{-1}\rho_0)\div_{A_{2}} v_2 \ \id 
          + 2\mu(J_{2}^{-1}\rho_0) D_{A_{2}}(v_2) \big) \cdot v_2 \hfill\cr\hfill
 -\adj(DX_{1})    \big(\lambda(J_{1}^{-1}\rho_0)\div_{A_{1}} v_1 \ \id 
          + 2\mu(J_{1}^{-1}\rho_0) D_{A_{1}}(v_1) \big) \cdot v_1.}
%&= \big(\adj(DX_{v_2})\lambda(J_{v_2}^{-1}\rho_0)\div_{A_{v_2}} |v_2|^2 \ 
%     - \adj(DX_{v_1}) \lambda(J_{v_1}^{-1}\rho_0)\div_{A_{v_1}} |v_1|^2 \big)   \\&\quad
 %-2 \big( \adj(DX_{v_2})\mu(J_{v_2}^{-1}\rho_0) D_{A_{v_2}}(v_2) \cdot v_2
   %    -\adj(DX_{v_1}) \mu(J_{v_1}^{-1}\rho_0) D_{A_{v_1}}(v_1) \cdot v_1\big). 
$$
Then using Proposition \ref{prop:flow2},  it is easy to see that for $j=5,6,7,8,$ we have
\begin{equation}
\|I_j(v_2,v_2)-I_j(v_1,v_1) \|_{L^1_T(\dot B^{\frac{n}{p}}_{p,1})}\leq
C_{\rho_0} 
\|(Dv_1,Dv_2,\psi_1,\psi_2)\|_{E_p(T)}
\|(\dv,\dpsi)\|_{E_p(T)}.
\end{equation}
Proposition \ref{apriori:LM-K} gives us that 
\begin{equation*}
\begin{split}
&\|(\du,\dK)\|_{E_p(T)} \\
&\le Ce^{C_{\rho,m}T} 
\Big(\Big\|\sum_{j=1}^3(I_j(v_2,v_2)-I_j(v_1,v_1)) 
         +(I_4(v_2,\psi_2)-I_4(v_1,\psi_1))
  \Big\|_{L^1_T(\dot B^{\frac{n}{p}}_{p,1})} \\
&\qquad\qquad\qquad
+ \Big\|\sum_{j=5}^7 (I_j(v_2,\psi_2)- I_j(v_1, \psi_1))
         + (I_8(v_2,v_2)- I_8(v_1,v_1))
  \Big\|_{L^1_T(\dot B^{\frac{n}{p}-1}_{p,1})} 
\Big)\\
&\le  Ce^{C_{\rho,m}T} \|(v_1,v_2,\psi_1,\psi_2)\|_{E_p(T)}
\|(\dv,\dpsi)\|_{E_p(T)}.
\end{split}
\end{equation*}

Given that $v_j, \psi_j \in \bar B_{E_p(T)}((u_L,K_L), R)$
($j=1,2$), taking  $\eta$, $T$ and $R$ smaller as the case may be, we end up with 
\begin{equation}
\begin{split}
\|(\du,\dK)\|_{E_p(T)}
&\le \frac{1}{2}
\|(\dv,\dpsi)|_{E_p(T)}. \\
\end{split}
\end{equation}
One can thus conclude that $\Phi$ admits a unique fixed point in 
$\bar B_{E_p(T)}\bigl((u_L,K_L), R\bigr)$.

%%%%%%%%%%%%%%%
\medbreak\noindent{\sl Third step: Regularity of the density.}
\smallbreak
Granted with the above velocity field
$u$ in $E_p(T)$, we set $\rho:=J_u^{-1}\rho_0$.
By construction, the triplet $(\rho,u,K)$ satisfies 
\eqref{CNS-l}. In order to prove that $a:=\rho-1$ 
is in $\cC([0,T];\dot B^{\frac{n}{p}}_{p,1}),$ we use the fact that 
\begin{equation*}
a=(J_u^{-1}-1)a_0+a_0.
\end{equation*}
Given Proposition \ref{prop:flow1}, and the fact that $Du\in L^1(0,T;\dot B^{\frac{n}{p}}_{p,1})$,
it is clear that 
$(J_u^{-1}-1)$ belongs to 
$\cC([0,T];\dot B^{\frac{n}{p}}_{p,1})$.
Hence $a$ belongs to $\cC([0,T];\dot B^{\frac{n}{p}}_{p,1})$, too. 
Because $\dot B^{\frac{n}{p}}_{p,1}$ is continuously 
embedded in $L^\infty$, 
Condition $\inf_{x} \rho>0$ is fulfilled 
on $[0,T]$ (taking smaller $T$ if needed).

\medbreak\noindent{\sl Last step: Uniqueness and continuity of the flow map.}
\smallbreak
We now consider two triplets
$(\rho_{01}, u_{01}, K_{01})$ and $(\rho_{02}, u_{02}, K_{02})$ 
of data fulfilling the assumptions of 
Theorem \ref{thm1} and we 
denote by $(\rho_1,u_1, K_1)$ and $(\rho_2,u_2, K_2)$ 
two solutions with $(a_1,u_1, K_1)$ and 
$(a_2,u_2, K_2)$ in $E_p(T)$ corresponding 
to those data. 
Let 
$M_{\rho_{0j}} K 
    := \rho_{0j}^{-1} \nabla (\rho_{0j}^{-1} \pi_1(\rho_{0j}^{-1}) K)$. 
Making difference of the two equations corresponding to 
$(\rho_1,u_1)$ and $(\rho_2,u_2)$, we have
\begin{equation*}
\begin{split}
&L_{\rho_{02}} u_2-L_{\rho_{01}}u_1 
= L_{\rho_{01}} \du + (L_{\rho_{02}} -L_{\rho_{01}}) u_2 \\
\text{and} \quad 
&K_{\rho_{02}} K_2-K_{\rho_{01}}K_1 
= M_{\rho_{01}} \dK + (M_{\rho_{02}} -M_{\rho_{01}}) K_2. \\
\end{split}
\end{equation*}
Setting $\du := u_2 - u_1$
and $\dK := K_2 - K_1$, we thus get  
\begin{equation}\label{diff}
\left\{
\begin{array}{l}
L_{\rho_{01}} \du + M_{\rho_{01}} \dK
 = (L_{\rho_{01}}-L_{\rho_{02}}) (u_2) + (M_{\rho_{01}} -M_{\rho_{02}}) K_2\\
 \quad+ (\rho_{02})^{-1} \div \Big( \Sum_{j=1}^3 (I_j^2(u_2,u_2)-I_j^2(u_1,u_1))
     +(I_4^2(u_2,\psi_2)-I_4^2(u_1,\psi_1)) \Big)  \\
 \quad+ \big((\rho_{02})^{-1}-(\rho_{01})^{-1}\big) 
 \div \Big( \Sum_{j=1}^3 I_j^2(u_1,u_1)+I_4^2(u_1,\psi_1) \Big)\\
 \quad+ (\rho_{01})^{-1} \div \Big( \Sum_{j=1}^3 (I^2_j-I^1_j)(u_1,u_1)
     +(I_4^2-I_4^1)(u_1,\psi_1)\Big), \\[2ex]
H_{\rho_{01}} \dK
 = \div \big(\Sum_{j=1}^3 (I_j^2(u_2,K_2)- I_j^2(u_1, K_1))
  + (I_8^2(u_2,u_2)- I_8^2(u_1,u_1)) \big) \\
 \qquad\qquad\qquad\qquad+ \div \Big(\Sum_{j=1}^3 (I_j^2- I_j^1)(u_1, K_1)
  + (I_8^2- I_8^1)(u_1,u_1) \Big), 
\end{array}
\right.
\end{equation} 
where $I^i_j$ ($j=1,...,5$) 
correspond to the quantities that have been defined 
previously in \eqref{terms} with density $\rho_{0i}$ for $i=1,2$:
\begin{equation*}
\begin{split}
&I_1^i(v,w) := (\adj(DX_v)- Id)\big(\mu(J_v^{-1}\rho_{0i})(DwA_v +{}^t\!A_v \nabla w) \\
&\qquad\qquad\qquad 
     + \lambda(J_v^{-1}\rho_{0i})(^t\!A_v:\nabla w)\id\big), \\
&I_2^i(v,w) := (\mu(J_v^{-1}\rho_{0i})-\mu(\rho_0^i))(Dw\cdot A_v+{}^t\!A_v\cdot \nabla w) \\
&\qquad\qquad\qquad 
     + (\lambda(J_v^{-1}\rho_{0i})-\lambda(\rho_{0i}))(^t\!A_v:\nabla w)\id, \\
&I_3^i(v,w) := \mu(\rho_{0i})(Dw(A_v-\id)+^t\!(A_v-\id) \nabla w)
              + \lambda(\rho_{0i})(^t\!(A_v-\id):\nabla w)\id, \\
&I_4^i(v,\psi) := -\adj(DX_v)P(J_v^{-1}\rho_{0i},\psi), \\
&I_5^i(v,\psi) := ( k(J_v^{-1}\rho_{0i})\adj(DX_v)^t\!A_v 
                 - k(\rho_{0i})\id) \nabla(\frac{\psi}{\rho_{0i}}), \\
&I_6^i(v,\psi) := k(J_v^{-1}\rho_{0i})\adj(DX_v)^t\!A_v \nabla (\frac{|v|^2}{2}), \\
&I_7^i(v,\psi) := \adj(DX_v) v P(J_v^{-1}\rho_{0i}, \psi), \\
\text{and} \ 
&I_8^i(v,w) := \adj(DX_v) \big(\lambda(J_v^{-1}\rho_{0i})\div_{A_v} w \ \id 
                    + 2\mu(J_v^{-1}\rho_{0i}) D_{A_v}(w) \big) \cdot w. \\ 
\end{split}
\end{equation*}
The proof is carried out by applying Proposition 
\ref{apriori:LM-K} to \eqref{diff} and 
using Proposition \ref{prop:flow2} to estimate 
each term on the left-hand side of \eqref{diff}, 
exactly as in the second step. 
\medbreak
Assuming that $\da_0$, $\du_0$ 
and $\dK_0$ are small enough,  a bootstrap argument will provide us with, for small enough $t,$
\begin{equation*}
\begin{split}
\|(\du,\dK)\|_{E_p(t)}
&\le C_{\rho_{01}, \rho_{02}}
\big( \|\da_0\|_{\dot B^{\frac{n}{p}}_{p,1}} 
+ \|\du_0\|_{\dot B^{\frac{n}{p}-1}_{p,1}}
+ \|\dK_0\|_{\dot B^{\frac{n}{p}-2}_{p,1}}\big). \\
\end{split}
\end{equation*}
Regarding the density, we have
\begin{equation*}
\begin{split}
\da = J_{u_1}^{-1}\da_0 + (J_{u_2}^{-1}-J_{u_1}^{-1})a_{02}.
\end{split}
\end{equation*}
Hence for all $t\in[0,T]$,
\begin{equation}
\nonumber
\label{cont-flow}
\begin{split}
\|\da(t)\|_{\dot B^{\frac{n}{p}}_{p,1}}
&\le C (1+\|D\du\|_{L^1_t(\dot B^{\frac{n}{p}}_{p,1})})
\big( \|\du_0\|_{\dot B^{\frac{n}{p}}_{p,1}}
+ \|\da_0\|_{\dot B^{\frac{n}{p}}_{p,1}} \big). \\
\end{split}
\end{equation}
Therefore, we may conclude to both  uniqueness and 
continuity of the data-solution map on 
a small enough time interval. 
Iterating the proof will yield 
uniqueness on the whole time interval $[0,T]$.

%%%%%%%%%%%%%%%%%%%%%%%%%%%%%%%%%%%%

\subsection{Proof of Theorem \ref{thm2}}
Finally, we consider the possibility of 
reverting back the solution obtained in the Lagrangian 
coordinates to that in the Eulerian coordinates.
Theorem \ref{thm2} is a corollary of the 
following proposition which 
states that, under the restriction $1<p<n$ and $n\ge3$, 
Systems \eqref{CNS} and \eqref{CNS-l} 
(and consequently \eqref{CNSE} and \eqref{CNSE-l} as well)
are equivalent in our functional framework. 
\begin{prop}
\label{prop-equiv}
Let $1<p<n$ with $n\ge3,$ and  
$(\rho,u,\theta)$ be a solution to \eqref{CNS} with 
$\rho-1\in \cC([0,T];\dot B^{\frac{n}{p}}_{p,1})$, 
$(u,\theta)\in E_p(T)$ and, for small enough $c,$
\begin{equation}\label{cond-u}
\int_0^T \|\nabla u\|_{\dot B^{\frac{n}{p}}_{p,1}}\le c.
\end{equation}
Let $X$ be the flow of $u$ defined in \eqref{def-flow}
and $E,$ the {\sl total energy by unit volume} defined 
in \eqref{def-E}. 
Then after defining the triplet 
$(\overline\rho, \overline u, \overline E)
 :=(\rho\circ X, u\circ X, E\circ X)$ 
and 
$\overline K$ as in \eqref{def-K}, 
the triplet $(\overline\rho, \overline u, \overline K)$ 
belongs to the same functional space as $(\rho,u,\theta)$ 
and satisfies \eqref{CNS-l}.

Conversely, if $\overline\rho-1 \in \cC([0,T];\dot B^{\frac{n}{p}}_{p,1})$, 
$(\overline u, \overline K) \in E_p(T)$,  
and $(\overline\rho,\overline u, \overline K)$ satisfies 
\eqref{CNS-l} and, 
for a small enough constant $c$, 
\begin{equation}
\label{cond-baru}
\int_0^T \|\nabla \overline u\|_{\dot B^{\frac{n}{p}}_{p,1}}
 \le c
\end{equation}
then the map $X$ defined in \eqref{def-flow2}
 is a $\cC^1$ 
(and in fact locally $\dot B^{\frac{n}{p}+1}_{p,1}$) 
diffeomorphism over $\R^n$ 
and after having defined $\overline E := J^{-1} \overline K$, 
$(\rho,u,E)
:=(\overline\rho\circ X^{-1},
   \overline u\circ X^{-1}, \overline E\circ X^{-1})$ 
and 
$\theta := \frac{E}{\rho}-\frac{|u|^2}{2}$, 
the triplet $(\rho,u,\theta)$  has the same regularity as 
$(\overline\rho,\overline u, \overline K)$ and satisfies \eqref{CNS}.

\end{prop}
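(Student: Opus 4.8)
The strategy is to show that, at the regularity considered, the change-of-variables computation of the Appendix --- which turns \eqref{CNSE} into \eqref{CNSE-l}, hence \eqref{CNS} into \eqref{CNS-l} --- is entirely reversible: one checks that every operation it uses (the chain rule, the Piola identity $\div\,\mathrm{adj}(DX)=0$, the relations between the Lagrangian and Eulerian differential operators, and the algebra of $J=\det DX$, $A=(DX)^{-1}$, $\mathrm{adj}(DX)$) produces only terms lying in the spaces dictated by $\dot B^{\frac np}_{p,1}$, $\dot B^{\frac np-1}_{p,1}$ and $\dot B^{\frac np-2}_{p,1}$, thanks to the product, composition and flow estimates of Propositions \ref{prop:prd}, \ref{prop:comp}, \ref{prop:flow1} and \ref{prop:flow2}. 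In both directions the first step is identical: the relevant velocity lies in $E_p(T)$, so $\nabla u\in L^1(0,T;\dot B^{\frac np}_{p,1})\hookrightarrow L^1(0,T;L^\infty)$ and the corresponding flow ($X$ from \eqref{def-flow}, resp.\ from \eqref{def-flow2}) is well defined and unique; Proposition \ref{prop:flow1} then yields $DX-\mathrm{id}\in\cC([0,T];\dot B^{\frac np}_{p,1})$ with norm $\lesssim\int_0^T\|\nabla u\|_{\dot B^{\frac np}_{p,1}}\le c$. Since $\dot B^{\frac np}_{p,1}\hookrightarrow L^\infty$, for $c$ small $X(t,\cdot)$ is therefore a $\cC^1$ (and locally $\dot B^{\frac np+1}_{p,1}$) diffeomorphism of $\R^n$, and $J$, $A$, $\mathrm{adj}(DX)$ and $X^{-1}-\mathrm{id}$ all belong to $\cC([0,T];\dot B^{\frac np}_{p,1})$, close to their constant values (using that $\dot B^{\frac np}_{p,1}$ is a Banach algebra together with composition with the smooth maps $M\mapsto\det M$ and $M\mapsto M^{-1}$).

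For the implication Lagrangian $\Rightarrow$ Eulerian one sets $\overline E:=J^{-1}\overline K$, which belongs to $\cC([0,T];\dot B^{\frac np-2}_{p,1})$ by the product $\dot B^{\frac np}_{p,1}\times\dot B^{\frac np-2}_{p,1}\to\dot B^{\frac np-2}_{p,1}$ (valid only because $1<p<n$), and defines $(\rho,u,E):=(\overline\rho\circ X^{-1},\overline u\circ X^{-1},\overline E\circ X^{-1})$ and $\theta:=\frac E\rho-\frac{|u|^2}2$. By Proposition \ref{prop:comp} applied with $X^{-1}$, these inherit the regularity of $(\overline\rho,\overline u,\overline E)$: one gets $\rho-1\in\cC([0,T];\dot B^{\frac np}_{p,1})$ and $u\in\cC([0,T];\dot B^{\frac np-1}_{p,1})$ with no restriction beyond $n\ge2$, whereas $E=\overline E\circ X^{-1}\in\cC([0,T];\dot B^{\frac np-2}_{p,1})$ requires the composition operator to be bounded on $\dot B^{\frac np-2}_{p,1}$, i.e.\ $\frac np-2>-\frac n{p'}$, which is exactly $n\ge3$. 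The $L^1_T$ bounds on $\partial_tu,\nabla^2u$ (and then on $\partial_t\theta,\nabla^2\theta$) come from differentiating the composition formulae --- e.g.\ $\nabla u=\bigl((D\overline u)\circ X^{-1}\bigr)DX^{-1}$, with $D\overline u\in L^1_T(\dot B^{\frac np}_{p,1})$ and $DX^{-1}-\mathrm{id}$ in the algebra $\dot B^{\frac np}_{p,1}$, and $\partial_tX^{-1}=-DX^{-1}\,u$ --- so that $(u,\theta)\in E_p(T)$; running the Appendix computation backwards then gives \eqref{CNS}. The converse implication is symmetric: from $u\in E_p(T)$ one checks $E=\rho\bigl(\frac{|u|^2}2+\theta\bigr)\in\cC([0,T];\dot B^{\frac np-2}_{p,1})$ (again using $1<p<n$, for both $\rho\theta$ and $\rho|u|^2$), sets $\overline\rho:=\rho\circ X$, $\overline u:=u\circ X$, $\overline E:=E\circ X$ (the energy term forcing $n\ge3$ in the composition estimate), and $\overline K:=J\overline E$ --- which, by the first equation of \eqref{CNS-l}, coincides with $\rho_0\bigl(\overline\theta+\frac{|\overline u|^2}2\bigr)$ as in \eqref{def-K}; the $E_p(T)$ regularity of $(\overline u,\overline K)$ is obtained as above, and the Appendix computation run forward gives \eqref{CNS-l}.

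The main obstacle is not the bookkeeping but the validity of the composition estimate at the \emph{lowest} regularity present, that of the energy. The map $f\mapsto f\circ X$, with $DX-\mathrm{id}$ small in $\dot B^{\frac np}_{p,1}$, is bounded on $\dot B^s_{p,1}$ only for $s>-\min\bigl(\frac np,\frac n{p'}\bigr)$, and the value $s=\frac np-2$ --- the one attached to the energy --- sits in that range only under the restrictions $1<p<n$ and $n\ge3$; symmetrically, the products $\dot B^{\frac np}_{p,1}\times\dot B^{\frac np-2}_{p,1}\to\dot B^{\frac np-2}_{p,1}$ and $\dot B^{\frac np-1}_{p,1}\times\dot B^{\frac np-1}_{p,1}\to\dot B^{\frac np-2}_{p,1}$ (the latter entering $|u|^2$ and the $|\overline u|^2$ of \eqref{def-K}) require $1<p<n$, the obstruction being the remainder term $R(\cdot,\cdot)$ of Bony's decomposition. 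These are exactly the restrictions in the statement, and outside this range the equivalence genuinely fails, as recalled in the remarks above. A secondary point requiring care is that $X$ and $X^{-1}$ are time dependent, so $\partial_t$ does not commute with composition; the $L^1_T$ regularity of $\partial_tu,\nabla^2u,\partial_t\theta,\nabla^2\theta$ (resp.\ of $\partial_t\overline u$, etc.) has to be read off the differentiated composition formulae --- using that $DX^{\pm1}-\mathrm{id}$ lies in the algebra $\dot B^{\frac np}_{p,1}$ --- rather than obtained by naive substitution.
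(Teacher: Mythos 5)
Your proof is correct and follows essentially the same route as the paper's: in both directions one uses Condition \eqref{cond-u} (resp.\ \eqref{cond-baru}) together with Proposition~\ref{prop:flow1} to place $DX-\id$, $A-\id$, $\adj(DX)-\id$ and $J^{\pm1}-1$ in $\cC([0,T];\dot B^{\frac np}_{p,1})$, then Proposition~\ref{flow-regr} (composition with the diffeomorphism $X^{\pm1}$) and Proposition~\ref{prop:prd} (the products $J^{\pm1}\cdot\overline K$, $\rho\theta$, $\rho|u|^2$) to transfer the $E_p(T)$ regularity, with exactly the restrictions $1<p<n$ and $n\ge3$ emerging from the lower endpoints of those propositions at regularity $\frac np-2$. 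The only slip is a citation: where you invoke ``Proposition~\ref{prop:comp} applied with $X^{-1}$'' the relevant result is Proposition~\ref{flow-regr} (boundedness of $a\mapsto a\circ X$), not Proposition~\ref{prop:comp}, which concerns superposition by a smooth scalar function $F$ --- and it is precisely the threshold $s>-\frac n{p'}$ in Proposition~\ref{flow-regr} at $s=\frac np-2$ that yields $n\ge 3$, so the correct attribution matters.
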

\begin{proof} For a solution $(\rho,u,\theta)$ to \eqref{CNS} 
with the above properties, the definition of $X$ 
in \eqref{def-flow} implies that 
$DX-\id \in \cC([0,T];\dot B^{\frac{n}{p}}_{p,1})$. 
In addition, having defined $E$ as in \eqref{def-E}, 
Proposition \ref{flow-regr} ensures 
that $(\overline\rho,\overline u, \overline E)$ 
lies in the same functional space as $(\rho,u,E)$, 
and Proposition \ref{prop:flow1} ensures that 
$A-\id$, $\adj(DX)-\id$ and $J^{\pm1}-1$ 
are in $\cC([0,T];\dot B^{\frac{n}{p}}_{p,1})$. 
After performing the change of variable, 
let us define $\overline K:=J\overline E$; 
then it is clear by 
$J^{-1}-1\in \cC([0,T];\dot B^{\frac{n}{p}}_{p,1})$ 
and the product laws 
that $K$ also lies in the same space as $E$ 
\emph{provided that $1<p<n$ and $n\geq3$} (see Proposition \ref{prop:prd}).
So eventually, under this latter condition, 
$(\overline\rho,\overline u, \overline K)$ fulfills \eqref{CNS-l}
and belongs to $(1+\cC([0,T];\dot B^{\frac np}_{p,1}))\times E_p(T).$

\medbreak
Conversely, let us assume 
that we are given some solution 
$(\overline\rho,\overline u, \overline K)$  
to \eqref{CNS-l} with 
$$\overline\rho \in\cC([0,T];\dot B^{\frac{n}{p}}_{p,1})\ \hbox{ and }\  
(\overline u, \overline K) \in E_p(T).$$ 
Then one may prove that, under Condition \eqref{cond-baru}, 
the ``flow" $X(t,\dot)$ of $(\overline u, \overline K)$
defined by \eqref{def-flow2}
is a $\cC^1$-diffeomorphism over $\R^n$ 
(see \cite{danchin2}  and \cite{danchin-mucha}), 
and satisfies 
$DX-\id \in \cC([0,T];\dot B^{\frac{n}{p}}_{p,1})$. 
We follow the above steps from backward: 
first we define $\overline E:=J^{-1}\overline K$, 
then clearly $\overline E$ satisfies 
\eqref{CNSE-l} under $1<p<n$ and $n\ge3$. 
Now, one may perform the change of variables 
\begin{equation}\nonumber
(\rho,u,E):=(\overline\rho\circ X^{-1}, 
   \overline u\circ X^{-1}, 
   \overline E\circ X^{-1})
\end{equation}
and set $\theta := \frac{E}{\rho}-\frac{|u|^2}{2}$ 
to confirm that $(\rho,u,\theta)$ is indeed 
a solution to \eqref{CNSE}. 
Proposition \ref{flow-regr} ensures 
that $(\rho,u,\theta)$ has the desired regularity. 
\end{proof}

\begin{proof}[Proof of Theorem \ref{thm2}]
We consider data $(\rho_0, u_0, \theta_0)$ with 
$\rho_0$ bounded away from 0, 
$(\rho_0-1)\in\dot B^{\frac{n}{p}}_{p,1}$, 
$u_0\in\dot B^{\frac{n}{p}-1}_{p,1}$
and $\theta_0\in\dot B^{\frac{n}{p}-2}_{p,1}$. Defining $K_0$ according to \eqref{def-K0} and
observing that $n\geq3$ and $1\leq p<n$ implies $K_0\in\dot B^{\frac np-2}_{p,1},$ 
Then Theorem \ref{thm1} provides a local 
solution $(\overline \rho, \overline u, \overline K)$ to System \eqref{CNS-l} with 
$\overline \rho\in\cC([0,T];\dot B^{\frac{n}{p}}_{p,1})$ and
$(\overline u, \overline K)\in E_p(T)$. 
If $T$ is small enough then \eqref{cond-u} is
satisfied so Proposition \ref{prop-equiv} ensures that 
\begin{equation}
\nonumber
(\rho,u,\theta)
:=(\overline\rho\circ X^{-1},
   \overline u\circ X^{-1}, 
   \frac{\overline E\circ X^{-1}}{\overline\rho}-\frac{|\overline u\circ X^{-1}|^2}{2} )
\end{equation}
is a solution of \eqref{CNS} in the desired functional space. 

To prove uniqueness, we consider two 
Eulerian solutions 
$(\rho_1,u_1,\theta_1)$ and $(\rho_2,u_2,\theta_2)$ 
corresponding to the same data $(\rho_0,u_0,\theta_0)$. 
We then rewrite the system in the form of \eqref{CNSE} as before 
and perform the Lagrangian change of variables 
(pertaining to the flow of $u_1$ and $u_2$ respectively). 
The obtained triplets $(\overline\rho_1,\overline u_1,\overline K_1)$ and 
$(\overline\rho_2,\overline u_2,\overline K_2)$ 
(where $\overline K_j := J_{u_j} \overline E_j$ with 
$E_j$ as before) 
are in $(1+\cC([0,T];\dot B^{\frac np}_{p,1}))\times E_p(T),$ and both satisfy  
\eqref{CNS-l} with the same $(\rho_0,u_0,K_0)$ 
(with $K_0$ defined as in \eqref{def-K0}). 
Hence they coincide, as a consequence of the 
uniqueness part of Theorem \ref{thm1}.
\end{proof}

\medbreak
We conclude this section with a short discussion about  the cases $n=2$, or  $n\geq3$ and  $n\leq p<2n.$

As already pointed out in the introduction, owing to the product laws (see Proposition \ref{prop:prd}), it is no longer possible 
to deduce that $K_0$ (or $E_0$) is in $\dot B^{\frac np-2}_{p,1}$ from the hypothesis that $a_0\in\dot B^{\frac np}_{p,1},$
$u_0\in\dot B^{\frac np-1}_{p,1}$ and $\theta_0\in\dot B^{\frac np-2}_{p,1}.$ 
Therefore it is suitable to look at the equivalence between the Lagrangian Navier-Stokes equations \eqref{CNS-l},
and the Eulerian Navier-Stokes equations written in terms of $(\rho,u,E)$ (namely \eqref{CNSE}), rather 
than in terms of $(\rho,u,\theta).$ 
In this new setting, one can mimic the proofs of Proposition \ref{prop-equiv} and Theorem \ref{thm2}.
The only difference concerns the regularity issue  when making the change 
from $\overline K$ to $\overline E :=J^{-1}\overline K$ (or conversely).
Indeed, from $J^{\pm1}-1\in \cC([0,T];\dot B^{\frac np}_{p,1})$ and $\overline K\in\cC([0,T];\dot B^{\frac np-2}_{p,1})$
it is no longer possible to deduce that $\overline E$ is in $\cC([0,T];\dot B^{\frac np-2}_{p,1}),$ because 
Condition $n/p-2>-\min(n/p,n/p')$   in Proposition \ref{prop:prd} is not fulfilled. 
At the same time, arguing by interpolation, we see that  the solution $(\overline a,\overline u,\overline K)$ constructed
in Theorem \ref{thm1} is such that
$$
\overline K\in L_T^{\frac1{1-\delta}}(\dot B^{\frac np-2\delta}_{p,1})\quad\hbox{for all }\ \delta\in [0,1].
$$
As  $J^{\pm1}-1\in \cC([0,T];\dot B^{\frac np}_{p,1}),$ we conclude that  
$\overline E\in  L_T^{\frac1{1-\delta}}(\dot B^{\frac np-2\delta}_{p,1})$ whenever $n\geq2,$ $\delta<1$ and $p<n\min(2,1/\delta).$ 
Then it is easy to conclude to the following corollary:
\begin{cor}\label{cor1}
Under the  assumptions of Theorem \ref{thm1} with $n=2$ and $1<p<4,$ or $n\geq3$ and $n\leq p<2n,$  
System \eqref{CNSE} has a unique   local solution $(\rho,u,E)$ with $\rho-1\in\cC([0,T];\dot B^{\frac np-1}_{p,1}),$ 
$u\in\cC([0,T];\dot B^{\frac np-1}_{p,1})\cap L^1(0,T;\dot B^{\frac np+1}_{p,1})$ and   
$E \in  L^{\frac1{1-\delta}}(0,T;\dot B^{\frac np-2\delta}_{p,1})$ for all $\delta\in[0,n/p).$ 
 \end{cor}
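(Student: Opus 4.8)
The proof combines Theorem~\ref{thm1}, the interpolation observation made just above, and a near-verbatim repetition of Proposition~\ref{prop-equiv} and of the proof of Theorem~\ref{thm2}, carried out with the unknown $E$ (total energy per unit volume) in place of $\theta$. First, from data satisfying the hypotheses of Theorem~\ref{thm1} — so that $K_0=E_0$ — that theorem yields, for some $T>0$, a unique Lagrangian solution $(\overline\rho,\overline u,\overline K)$ with $\overline a:=\overline\rho-1\in\cC([0,T];\dot B^{\frac np}_{p,1})$ and $(\overline u,\overline K)\in E_p(T)$; by the interpolation argument above, $\overline K\in L_T^{\frac1{1-\delta}}(\dot B^{\frac np-2\delta}_{p,1})$ for every $\delta\in[0,1)$. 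Shrinking $T$ if needed so that $\|\nabla\overline u\|_{L^1_T(\dot B^{\frac np}_{p,1})}$ is as small as we wish, Condition~\eqref{cond-baru} holds, the flow $X$ of~\eqref{def-flow2} is a $\cC^1$ diffeomorphism of $\R^n$, and $DX-\id,\,A-\id,\,\adj(DX)-\id,\,J^{\pm1}-1\in\cC([0,T];\dot B^{\frac np}_{p,1})$ by Propositions~\ref{prop:flow1} and~\ref{prop:flow2}.

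Next, set $\overline E:=J^{-1}\overline K=(J^{-1}-1)\overline K+\overline K$. Since $J^{-1}-1\in L^\infty_T(\dot B^{\frac np}_{p,1})$, the product estimate of Proposition~\ref{prop:prd} bounds the first summand in $L_T^{\frac1{1-\delta}}(\dot B^{\frac np-2\delta}_{p,1})$ precisely when $\frac np-2\delta>-\min(n/p,n/p')$, which, as noted above, is the case whenever $n\geq2$, $\delta<1$ and $p<n\min(2,1/\delta)$, hence for all $\delta\in[0,n/p)$ in the two ranges of the statement. Therefore $\overline E\in L_T^{\frac1{1-\delta}}(\dot B^{\frac np-2\delta}_{p,1})$ for those $\delta$, and, exactly as in the first half of the proof of Proposition~\ref{prop-equiv}, $(\overline\rho,\overline u,\overline E)$ solves~\eqref{CNSE-l}. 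Pushing forward by $X^{-1}$ and using Proposition~\ref{flow-regr} to carry the space-time regularity through the diffeomorphism, we get a solution $(\rho,u,E):=(\overline\rho\circ X^{-1},\overline u\circ X^{-1},\overline E\circ X^{-1})$ of~\eqref{CNSE} with the announced regularity, $\rho$ being bounded away from $0$ after one more shrinking of $T$ (recall $\dot B^{\frac np}_{p,1}\hookrightarrow L^\infty$).

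For uniqueness we repeat the last step of the proof of Theorem~\ref{thm2}: given two Eulerian solutions $(\rho_1,u_1,E_1)$ and $(\rho_2,u_2,E_2)$ of~\eqref{CNSE} in the stated class with the same data, Condition~\eqref{cond-u} holds for small $T$, so each admits a Lagrangian image; setting $\overline K_i:=J_{u_i}\overline E_i$ and using the interpolation bound (now for $E_i$) together with $J_{u_i}^{\pm1}-1\in\cC([0,T];\dot B^{\frac np}_{p,1})$ shows $\overline K_i\in\cC([0,T];\dot B^{\frac np-2}_{p,1})$, so that $(\overline\rho_i,\overline u_i,\overline K_i)$ both solve~\eqref{CNS-l} with the common data $(\rho_0,u_0,K_0)$, $K_0$ as in~\eqref{def-K0}. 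The uniqueness part of Theorem~\ref{thm1} forces the two Lagrangian triplets to coincide, hence so do the two Eulerian solutions.

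The step demanding the most care is the regularity bookkeeping at $\overline E=J^{-1}\overline K$ and, for uniqueness, at $\overline K=J\overline E$: one must check that the time-integrated Besov norm of $\overline K$ (resp. $\overline E$) is exactly enough to survive multiplication by $J^{\pm1}-1\in\dot B^{\frac np}_{p,1}$ in the borderline range where the naive product $\dot B^{\frac np}_{p,1}\times\dot B^{\frac np-2}_{p,1}\to\dot B^{\frac np-2}_{p,1}$ breaks down, and that Proposition~\ref{flow-regr}, stated for $\cC([0,T];\cdot)$-valued maps, still applies to the $L^q_T$-valued function $E$ without degrading its mixed norm. Every other estimate is identical to one already appearing in Proposition~\ref{prop-equiv} or in the proof of Theorem~\ref{thm2}.
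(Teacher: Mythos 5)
Your existence argument is the one the paper has in mind: apply Theorem~\ref{thm1}, interpolate $\overline K$ between $L^\infty_T(\dot B^{\frac np-2}_{p,1})$ and $L^1_T(\dot B^{\frac np}_{p,1})$, multiply by $J^{-1}-1\in\cC([0,T];\dot B^{\frac np}_{p,1})$ at the intermediate regularity levels $\frac np-2\delta$ where Proposition~\ref{prop:prd} still applies, and push forward with Proposition~\ref{flow-regr}. That part is correct and matches the text preceding the corollary.

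The uniqueness step, however, contains a genuine gap. You assert that from $E_i\in L_T^{\frac1{1-\delta}}(\dot B^{\frac np-2\delta}_{p,1})$ for all $\delta\in[0,n/p)$ and $J_{u_i}^{\pm1}-1\in\cC([0,T];\dot B^{\frac np}_{p,1})$ one obtains $\overline K_i=J_{u_i}\overline E_i\in\cC([0,T];\dot B^{\frac np-2}_{p,1})$. This is precisely what \emph{cannot} be deduced in the ranges covered by the corollary: the endpoint corresponds to $\delta=1$, and the product estimate $\dot B^{\frac np}_{p,1}\times\dot B^{\frac np-2}_{p,1}\to\dot B^{\frac np-2}_{p,1}$ requires $\frac np-2>-\min(\frac np,\frac n{p'})$, i.e. $p<n$ when $p\ge2$ (for $n\ge3$, $n\le p<2n$ one has $p\ge2$ and $p\ge n$, so it fails; for $n=2$, $2\le p<4$ it also fails). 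This is exactly the obstruction the paper highlights just before the corollary to explain why one must work at the non-endpoint levels $\frac np-2\delta$ with $\delta<n/p$. Membership in the whole family $\bigcap_{\delta<n/p}L^{1/(1-\delta)}_T(\dot B^{\frac np-2\delta}_{p,1})$ does not imply membership at the endpoint $\delta=1$, so your chain of implications does not yield $\overline K_i\in E_p(T)$, which is what is needed to invoke the uniqueness part of Theorem~\ref{thm1}.

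To close the argument one has to do more than apply the product law once: for instance, one can read the $\overline K_i$-equation as a parabolic equation with source terms estimated from the (intermediate-regularity) bounds already available and upgrade $\overline K_i$ to $E_p(T)$ via maximal regularity, or otherwise restrict the uniqueness class. As it stands, the step ``shows $\overline K_i\in\cC([0,T];\dot B^{\frac np-2}_{p,1})$'' should be replaced by an actual argument; simply citing the interpolation bound and the product law is not enough, since those are exactly the tools that fail at the endpoint. (To be fair, the paper is terse on uniqueness here --- it only says ``it is easy to conclude'' --- so you are not alone in glossing over this point, but the assertion as you have written it is false as a product-law deduction.)
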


%%%%%%%%%%%%%%%%%%%%%%%%%%%%%%%%%%%%%%%%

\section{Appendix}

This section is devoted to presenting some technical results that have been used repeatedly 
in the paper. In the first paragraph, we recall basic nonlinear estimates 
involving Besov norms.  Next, we state estimates for the flow. Finally, 
we give some details on how \eqref{CNS-l} may be derived from \eqref{CNSE}.

\subsection{Estimates for product, composition and commutators}
For the proofs of the following propositions, see 
\cites{bahouri-chemin-danchin,danchin2,danchin-mucha,runst-sickel}
and the references therein. 
\begin{prop}
\label{prop:prd}
Let $\nu\ge0$ and 
$\displaystyle -\min(\frac{n}{p},\frac{n}{p'})<\sigma\le\frac{n}{p}-\nu$.
The following product law holds:
\begin{equation*}
\|uv\|_{\dot B^{\sigma}_{p,1}}
\le C \|u\|_{\dot B^{\frac{n}{p}-\nu}_{p,1}}
      \|v\|_{\dot B^{\sigma+\nu}_{p,1}}.
\end{equation*}
\end{prop}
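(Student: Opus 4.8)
The plan is to prove the estimate through Bony's homogeneous paraproduct decomposition
$$
uv=\dot T_uv+\dot T_vu+\dot R(u,v),\quad
\dot T_uv:=\sum_{j\in\Z}\dot S_{j-1}u\,\dot\Delta_jv,\quad
\dot R(u,v):=\sum_{j\in\Z}\dot\Delta_ju\,\widetilde{\dot\Delta}_jv,
$$
with $\widetilde{\dot\Delta}_j:=\dot\Delta_{j-1}+\dot\Delta_j+\dot\Delta_{j+1}$ and $\dot T_vu$ defined symmetrically, and to bound each of the three pieces separately in $\dot B^\sigma_{p,1}$. The only tools are Bernstein's and Hölder's inequalities. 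The structural point is that each block $\dot S_{j-1}u\,\dot\Delta_jv$ is spectrally supported in a dyadic annulus $\{|\xi|\sim2^j\}$, while each block $\dot\Delta_ju\,\widetilde{\dot\Delta}_jv$ is supported in a ball $\{|\xi|\lesssim2^j\}$; this is exactly why the two paraproducts put no lower restriction on $\sigma$ whereas the remainder does. Convergence of the three series in $\mathcal{S}'$ — and hence the very meaning of the product $uv$ — will be a by-product of the estimates below, together with the defining property that elements of $\dot B^\sigma_{p,1}$ have convergent Littlewood--Paley series.

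For the paraproduct $\dot T_uv$: Bernstein's inequality and $\nu\ge0$ yield
$$
\|\dot S_{j-1}u\|_{L^\infty}\lesssim\sum_{j'\le j-2}2^{j'n/p}\|\dot\Delta_{j'}u\|_{L^p}
=\sum_{j'\le j-2}2^{j'\nu}\,(2^{j'(n/p-\nu)}\|\dot\Delta_{j'}u\|_{L^p})
\lesssim2^{j\nu}\|u\|_{\dot B^{n/p-\nu}_{p,1}}.
$$
Since $\dot\Delta_k(\dot S_{j-1}u\,\dot\Delta_jv)=0$ unless $|k-j|\le N_0$ for a fixed integer $N_0$, multiplying by $2^{k\sigma}$, taking $L^p$ norms, using $2^{k\sigma}\sim2^{j\sigma}$ on that range and summing over $k$ and then $j$ gives $\|\dot T_uv\|_{\dot B^\sigma_{p,1}}\lesssim\|u\|_{\dot B^{n/p-\nu}_{p,1}}\|v\|_{\dot B^{\sigma+\nu}_{p,1}}$ for \emph{every} real $\sigma$. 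For $\dot T_vu$ the roles are exchanged, and this is where the upper constraint is used: $\sigma+\nu\le n/p$ makes $n/p-\sigma-\nu\ge0$, so
$$
\|\dot S_{j-1}v\|_{L^\infty}\lesssim\sum_{j'\le j-2}2^{j'(n/p-\sigma-\nu)}\,(2^{j'(\sigma+\nu)}\|\dot\Delta_{j'}v\|_{L^p})\lesssim2^{j(n/p-\sigma-\nu)}\|v\|_{\dot B^{\sigma+\nu}_{p,1}},
$$
and the identical book-keeping produces $\|\dot T_vu\|_{\dot B^\sigma_{p,1}}\lesssim\|u\|_{\dot B^{n/p-\nu}_{p,1}}\|v\|_{\dot B^{\sigma+\nu}_{p,1}}$.

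The remainder $\dot R(u,v)$ is the delicate term, and the one that forces the lower bound on $\sigma$. Because $\dot\Delta_ju\,\widetilde{\dot\Delta}_jv$ is frequency-localized in a ball of radius $\sim2^j$, one has $\dot\Delta_k\dot R(u,v)=\sum_{j\ge k-N_0}\dot\Delta_k(\dot\Delta_ju\,\widetilde{\dot\Delta}_jv)$, and summing over $k\to-\infty$ requires extracting a positive power of $2^k$ from each summand. I would produce it by a Hölder estimate followed by Bernstein, in a form depending on the position of $p$ with respect to $2$: for $p\ge2$, $\|\dot\Delta_ju\,\widetilde{\dot\Delta}_jv\|_{L^{p/2}}\le\|\dot\Delta_ju\|_{L^p}\|\widetilde{\dot\Delta}_jv\|_{L^p}$ and Bernstein from $L^{p/2}$ to $L^p$ on a ball of size $2^k$ supplies the factor $2^{kn/p}$; for $1\le p\le2$, $\|\dot\Delta_ju\,\widetilde{\dot\Delta}_jv\|_{L^1}\le\|\dot\Delta_ju\|_{L^p}\|\widetilde{\dot\Delta}_jv\|_{L^{p'}}\lesssim2^{jn(1/p-1/p')}\|\dot\Delta_ju\|_{L^p}\|\widetilde{\dot\Delta}_jv\|_{L^p}$ and Bernstein from $L^1$ to $L^p$ supplies the factor $2^{kn/p'}$. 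Writing $\|\dot\Delta_ju\|_{L^p}=2^{-j(n/p-\nu)}a_j$ and $\|\widetilde{\dot\Delta}_jv\|_{L^p}=2^{-j(\sigma+\nu)}b_j$, with $(a_j),(b_j)\in\ell^1(\Z)$ of norms $\lesssim\|u\|_{\dot B^{n/p-\nu}_{p,1}}$ and $\lesssim\|v\|_{\dot B^{\sigma+\nu}_{p,1}}$ respectively, both cases collapse to
$$
2^{k\sigma}\|\dot\Delta_k(\dot\Delta_ju\,\widetilde{\dot\Delta}_jv)\|_{L^p}\lesssim2^{(k-j)(\sigma+\min(n/p,n/p'))}a_jb_j,
$$
since $\min(n/p,n/p')$ equals $n/p$ when $p\ge2$ and $n/p'$ when $p\le2$. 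Summing over $j$ and $k$ therefore gives
$$
\|\dot R(u,v)\|_{\dot B^\sigma_{p,1}}\lesssim\sum_j a_jb_j\sum_{k\le j+N_0}2^{(k-j)(\sigma+\min(n/p,n/p'))}\lesssim\sum_j a_jb_j\lesssim\|u\|_{\dot B^{n/p-\nu}_{p,1}}\|v\|_{\dot B^{\sigma+\nu}_{p,1}},
$$
the inner geometric series converging precisely because $\sigma>-\min(n/p,n/p')$; adding the three contributions finishes the proof. The main obstacle is exactly this remainder step — choosing the right auxiliary Lebesgue exponent ($p/2$ when $p\ge2$, $1$ when $p\le2$) so that Bernstein manufactures a \emph{summable} geometric factor — and it is also the place where one has to verify that $uv$, and in particular $\dot R(u,v)$, is a well-defined tempered distribution to begin with.
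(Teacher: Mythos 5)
Your argument is correct, and it is the standard Bony paraproduct proof. The paper does not supply its own proof of Proposition~\ref{prop:prd} — it simply cites \cite{bahouri-chemin-danchin}, \cite{runst-sickel} and related works — and the route you take (decompose $uv=\dot T_uv+\dot T_vu+\dot R(u,v)$; bound $\dot T_uv$ using $\nu\ge0$, bound $\dot T_vu$ using $\sigma+\nu\le n/p$, and make the remainder summable by a Hölder--Bernstein step whose output exponent is $n/p$ for $p\ge2$ and $n/p'$ for $p\le2$, which forces $\sigma>-\min(n/p,n/p')$) is precisely the one found in those references, with the same role for each hypothesis.
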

\begin{prop}
\label{prop:comp}
Let $F:I\to \R$ be a smooth function 
(with $I$ an open interval of $\R$ containing 0) 
vanishing at 0. Then for any $s>0$, $1\le p\le\infty$ 
and interval $J$ compactly supported 
in $I$ there exists a constant $C$ such that 
\begin{equation*}
\|F(a)\|_{\dot B^{s}_{p,1}}
\le C \|a\|_{\dot B^{s}_{p,1}}
\end{equation*}
for any $a\in\dot B^{s}_{p,1}$ with values in $J$. 
In addition, if $a_1$ and $a_2$ are two
such functions and $s=\Frac{n}{p}$ then we have
\begin{equation*}
\|F(a_2)-F(a_1)\|_{\dot B^{s}_{p,1}}
\le C \|a_2-a_1\|_{\dot B^{s}_{p,1}}.
\end{equation*}
\end{prop}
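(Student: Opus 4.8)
The plan is to reduce the statement to a Littlewood--Paley decomposition of the composed function. First I would make $F$ globally nice: since $a$ is valued in the compact interval $J$, one may alter $F$ outside a compact neighbourhood of $\overline J$ without changing $F(a)$, so it is no loss of generality to assume $F\in\cC^\infty(\R)$ with all derivatives bounded, $F(0)=0$, and $F$ unchanged on $\overline J$. The basic identity is the telescoping decomposition
$$
F(a)=\sum_{j\in\Z}\bigl(F(\dot S_{j+1}a)-F(\dot S_j a)\bigr)=\sum_{j\in\Z}m_j\,\dot\Delta_j a,
\qquad m_j:=\int_0^1 F'\bigl(\dot S_j a+t\,\dot\Delta_j a\bigr)\,dt,
$$
which holds in $\cS'(\R^n)$; its convergence as $j\to-\infty$ follows from $s>0$ (which, for $a\in\dot B^s_{p,1}$, gives $\dot S_j a\to0$ in $L^\infty$, hence $F(\dot S_j a)\to F(0)=0$), and as $j\to+\infty$ from $a=\sum_k\dot\Delta_k a$ in $\cS'$ together with $a\in L^\infty$. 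Two uniform bounds on the coefficients will be used: $\|m_j\|_{L^\infty}\le M:=\sup_{\R}|F'|<\infty$, and, by Bernstein's inequality applied to the spectrally localized functions $\dot S_j a$ and $\dot\Delta_j a$ (whose $L^\infty$ norms are $\lesssim\|a\|_{L^\infty}\le C_J$) combined with the Fa\`a di Bruno formula, $\|\nabla^N m_j\|_{L^\infty}\le C_{F,J,N}\,2^{Nj}$ for every $N\in\N$.

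I would then bound $\|F(a)\|_{\dot B^s_{p,1}}=\sum_q 2^{qs}\|\dot\Delta_q F(a)\|_{L^p}$ by inserting the decomposition and splitting the inner sum over $j$ at $j=q-N_0$ for a fixed integer $N_0$. For $j\ge q-N_0$ the crude bound $\|\dot\Delta_q(m_j\dot\Delta_j a)\|_{L^p}\le M\|\dot\Delta_j a\|_{L^p}$ suffices: after multiplying by $2^{qs}$ and summing over $q\le j+N_0$ and then over $j$ one recovers $CM\|a\|_{\dot B^s_{p,1}}$. For $j<q-N_0$ the spectrum of $\dot\Delta_j a$ is too small to reach that of $\dot\Delta_q$, so only the high frequencies of $m_j$ contribute and $\dot\Delta_q(m_j\dot\Delta_j a)=\dot\Delta_q\bigl((\id-\dot S_{q-N_1})m_j\cdot\dot\Delta_j a\bigr)$, whence
$$
\|\dot\Delta_q(m_j\dot\Delta_j a)\|_{L^p}\lesssim\|(\id-\dot S_{q-N_1})m_j\|_{L^\infty}\,\|\dot\Delta_j a\|_{L^p}\lesssim 2^{-Nq}\|\nabla^N m_j\|_{L^\infty}\,\|\dot\Delta_j a\|_{L^p}\lesssim C_{F,J,N}\,2^{-N(q-j)}\|\dot\Delta_j a\|_{L^p}.
$$
Taking $N>s$, multiplying by $2^{qs}$ and carrying out the resulting $\ell^1$ convolution sum in $q-j$ once more produces a multiple of $\|a\|_{\dot B^s_{p,1}}$, which completes the first estimate; the constant depends only on $F,J,s,p,n$, the point being that $\|a\|_{L^\infty}$ is controlled by $J$ alone, so it never enters the constant.

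For the second statement ($s=n/p$) I would write $F(a_2)-F(a_1)=G\,(a_2-a_1)$ with $G:=\int_0^1 F'\bigl(a_1+t(a_2-a_1)\bigr)\,dt$, each integrand argument being valued in a fixed compact subinterval of $I$. Writing $G=F'(0)+(G-F'(0))$ and applying the first part (for each fixed $t$) to the smooth function $F'-F'(0)$, which vanishes at $0$, gives $G-F'(0)\in\dot B^{n/p}_{p,1}$ with norm $\lesssim\sup_{0\le t\le1}\|a_1+t(a_2-a_1)\|_{\dot B^{n/p}_{p,1}}$. Since $\dot B^{n/p}_{p,1}$ is a Banach algebra (Proposition \ref{prop:prd} with $\nu=0$) and $F'(0)$ is a scalar, this yields
$$
\|F(a_2)-F(a_1)\|_{\dot B^{n/p}_{p,1}}\le C\bigl(1+\|a_1\|_{\dot B^{n/p}_{p,1}}+\|a_2\|_{\dot B^{n/p}_{p,1}}\bigr)\|a_2-a_1\|_{\dot B^{n/p}_{p,1}},
$$
hence the claim when $a_1,a_2$ range over a fixed ball (or, for the fully uniform constant, by rerunning the telescoping argument directly on $F(a_2)-F(a_1)=\sum_j\bigl(m_j^2\,\dot\Delta_j(a_2-a_1)+(m_j^2-m_j^1)\,\dot\Delta_j a_1\bigr)$, estimating $\|m_j^2-m_j^1\|_{L^\infty}$ and its derivatives by the mean value theorem and Bernstein exactly as above). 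I expect the main obstacle to be the low-frequency end $j\to-\infty$ of the telescoping series — both its convergence and, more seriously, the extraction of enough decay in $q-j$ in the high-frequency regime, which is what forces the iterated Bernstein/Fa\`a di Bruno bound on $\nabla^N m_j$ up to order $N>s$ and the use of the mere $L^\infty$ (rather than $\dot B^s_{p,1}$) size of $a$.
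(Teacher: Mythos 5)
The paper does not prove Proposition~\ref{prop:comp}; it simply refers the reader to the standard references \cite{bahouri-chemin-danchin}, \cite{runst-sickel}, \cite{danchin2}, \cite{danchin-mucha}. Your argument is, in substance, exactly the classical Meyer--Bony paralinearization proof that those references give: modify $F$ to have bounded derivatives outside a compact set, write $F(a)=\sum_j m_j\,\dot\Delta_j a$ with $m_j=\int_0^1 F'(\dot S_j a+t\dot\Delta_j a)\,dt$, establish the dyadic bounds $\|m_j\|_{L^\infty}\lesssim1$ and $\|\nabla^N m_j\|_{L^\infty}\lesssim 2^{Nj}$ by Fa\`a di Bruno and Bernstein, and then estimate $\|\dot\Delta_q F(a)\|_{L^p}$ by splitting the $j$-sum at $q-N_0$ and using, for $j\ll q$, the high-frequency smallness of $m_j$ with $N>s$ to make the tail summable. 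This is correct, and your computation of the low-frequency convergence $\dot S_j a\to 0$ in $L^\infty$ is justified here because the paper's definition of $\dot B^s_{p,1}$ carries the standing restriction $s\le n/p$ (so $\sum_{k<j}\|\dot\Delta_k a\|_{L^\infty}\lesssim\sum_{k<j}2^{ks}\|\dot\Delta_k a\|_{L^p}\to 0$).

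One point worth flagging, though it is not a gap in what you wrote: both of your routes to the Lipschitz estimate produce a constant that depends on $\|a_1\|_{\dot B^{n/p}_{p,1}}$ and $\|a_2\|_{\dot B^{n/p}_{p,1}}$ (in the algebra route through $\|G-F'(0)\|_{\dot B^{n/p}_{p,1}}$, and in the direct telescoping route through the term $\sum_j(m_j^2-m_j^1)\dot\Delta_j a_1$, whose $\dot B^{n/p}_{p,1}$-norm is controlled by $\|a_2-a_1\|_{\dot B^{n/p}_{p,1}}\,\|a_1\|_{\dot B^{n/p}_{p,1}}$, not by $\|a_2-a_1\|_{\dot B^{n/p}_{p,1}}$ alone). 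Read literally, the proposition's ``there exists a constant $C$'' might suggest a constant depending only on $F$, $J$, $n$, $p$; but the form you obtain is the one actually available in the cited literature, and it is also the one the paper uses in practice (e.g.\ in the estimates of $I_4$, $I_5$, where factors of $(1+\|a_0\|_{\dot B^{n/p}_{p,1}})$ appear). So the slightly informal statement of the proposition should be understood in the weaker, norm-dependent sense, and your proof is then complete.
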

\begin{prop}
\label{prop:comm1}
Assume that $\sigma$, $\nu$ and $p$ are such that 
\begin{equation}
\label{cond:comm}
1\le p \le \infty, \quad
0\le\nu\le\frac{n}{p} \ \text{and} \ 
-\min(\frac{n}{p},\frac{n}{p'})-1 <\sigma\le\frac{n}{p}-\nu.
\end{equation}
Then there exists a constant $C$ depending only on $\sigma$, $\nu$, 
$p$ and $n$ such that for all 
$k\in\{1,...,n\}$, 
we have for some sequence $(c_j)_{j\in\Z}$ with
$\|c_j\|_{\ell^1}=1$
\begin{equation*}
\|\partial_k [a,\phi_j]w\|_{L^p}
\le C c_j 2^{-j\sigma} \|\nabla a\|_{\dot B^{\frac{n}{p}-\nu}_{p,1}}
      \|v\|_{\dot B^{\sigma+\nu}_{p,1}} 
      \quad \text{for all} \ j\in\Z.
\end{equation*}
\end{prop}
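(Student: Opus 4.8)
The plan is to derive this commutator estimate from the product law of Proposition~\ref{prop:prd}, exactly as one does for the classical commutator lemmas of \cites{bahouri-chemin-danchin,danchin2,danchin-mucha,runst-sickel}. Writing $[b,\phi_j]h:=b\,(\phi_j\ast h)-\phi_j\ast(bh)$ and using $\phi_j\ast(\partial_k f)=\partial_k(\phi_j\ast f)$ together with Leibniz's rule, I would first record the algebraic identity
\[
\partial_k\,[a,\phi_j]\,w=[a,\phi_j]\,(\partial_k w)+[\partial_k a,\phi_j]\,w .
\]
It is therefore enough to prove one model bound: if $\nabla b\in\dot B^{\frac np-\mu}_{p,1}$ for some $\mu$ with $0\le\mu\le\frac np+1$, and if $-\min(\frac np,\frac n{p'})-1<\tau\le\frac np+1-\mu$, then for some sequence $(c_j)_{j\in\Z}$ with $\|c_j\|_{\ell^1}=1$ one has
\[
\|[b,\phi_j]h\|_{L^p}\le C\,c_j\,2^{-j\tau}\,\|\nabla b\|_{\dot B^{\frac np-\mu}_{p,1}}\,\|h\|_{\dot B^{\tau-1+\mu}_{p,1}}\qquad(j\in\Z).
\]
Applying this to $(b,h,\mu)=(a,\partial_k w,\nu)$ and to $(b,h,\mu)=(\partial_k a,w,\nu+1)$, in both cases with $\tau=\sigma$ and using $\|\partial_k w\|_{\dot B^{\sigma-1+\nu}_{p,1}}\lesssim\|w\|_{\dot B^{\sigma+\nu}_{p,1}}$ and $\|\nabla\partial_k a\|_{\dot B^{\frac np-1-\nu}_{p,1}}\lesssim\|\nabla a\|_{\dot B^{\frac np-\nu}_{p,1}}$, the two contributions add up to the claimed inequality, and the constraints required on $(\sigma,\nu,p)$ reduce precisely to \eqref{cond:comm}.

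To prove the model bound I would expand the two products in $[b,\phi_j]h$ by Bony's decomposition $fg=T_fg+T_gf+R(f,g)$, which gives (with $\dot\Delta_j h=\phi_j\ast h$)
\[
[b,\phi_j]h=[T_b,\phi_j]h+T_{\dot\Delta_j h}\,b+R(b,\dot\Delta_j h)-\dot\Delta_j(T_hb)-\dot\Delta_j R(b,h).
\]
The genuine commutator $[T_b,\phi_j]h$ is, modulo harmless spectral truncations, a finite sum over $|j'-j|\le4$ of terms $[\dot S_{j'-1}b,\phi_j]\dot\Delta_{j'}h=\int\phi_j(\,\cdot\,-y)\bigl(\dot S_{j'-1}b(\,\cdot\,)-\dot S_{j'-1}b(y)\bigr)\dot\Delta_{j'}h(y)\,dy$; expanding $\dot S_{j'-1}b(x)-\dot S_{j'-1}b(y)=(x-y)\cdot\int_0^1(\nabla\dot S_{j'-1}b)(y+s(x-y))\,ds$ turns each into a convolution whose kernel $z\mapsto z\,\phi_j(z)$ has $L^1$ norm $O(2^{-j})$, so that $\|[\dot S_{j'-1}b,\phi_j]\dot\Delta_{j'}h\|_{L^p}\lesssim2^{-j}\|\dot S_{j'-1}\nabla b\|_{L^\infty}\|\dot\Delta_{j'}h\|_{L^p}$; since $\dot B^{\frac np-\mu}_{p,1}\hookrightarrow\dot B^{-\mu}_{\infty,1}$ gives $\|\dot S_{j'-1}\nabla b\|_{L^\infty}\lesssim2^{j'\mu}c_{j'}\|\nabla b\|_{\dot B^{\frac np-\mu}_{p,1}}$, the summation over $|j'-j|\le4$ reproduces the factor $c_j2^{-j\tau}$ with no restriction on $\tau$. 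The three ``off-diagonal'' terms $T_{\dot\Delta_j h}b$, $R(b,\dot\Delta_j h)$ and $\dot\Delta_j(T_hb)$ involve $b$ (or $\nabla b$) only at frequencies $\gtrsim2^j$ or $\sim2^j$; I would estimate them directly by Bernstein's inequality, the embedding $\dot B^{\frac np+1-\mu}_{p,1}\hookrightarrow L^\infty$ and the paraproduct bounds contained in (the proof of) Proposition~\ref{prop:prd}, which is where the upper restriction $\tau\le\frac np+1-\mu$ enters.

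The remaining term $\dot\Delta_j R(b,h)$ is the one I expect to be the main obstacle. For $\tau>0$ it is controlled at once by Proposition~\ref{prop:prd}, which places $R(b,h)$ in $\dot B^{\tau}_{p,1}$ with the right norm; but in the low-regularity range $-\min(\frac np,\frac n{p'})-1<\tau\le0$ a term-by-term Littlewood--Paley bound is too lossy, and one must instead argue by duality against $\dot B^{-\tau}_{p',\infty}$ (equivalently, invoke the negative-index part of the product estimate, whose proof already incorporates this argument). It is exactly this step that forces the lower endpoint $-\min(\frac np,\frac n{p'})-1$, through the convergence of the low-frequency sums. Once this is settled, collecting the five contributions and performing the weighted $\ell^1$ summation over $j$ completes the proof; the remaining manipulations are routine paradifferential bookkeeping, carried out in detail in \cites{bahouri-chemin-danchin,danchin2,danchin-mucha}.
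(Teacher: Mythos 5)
The paper does not give a proof of Proposition~\ref{prop:comm1}: it is stated in the Appendix with the preamble ``For the proofs of the following propositions, see \cites{bahouri-chemin-danchin,danchin2,danchin-mucha,runst-sickel}.'' So there is no in-house argument to compare against. Judged on its own terms, your proposal reproduces the standard proof of such commutator lemmas (essentially Lemma~2.100 of \cite{bahouri-chemin-danchin}): Bony decomposition of $[b,\phi_j]h$, first-order Taylor expansion of the convolution kernel to extract the factor $2^{-j}\|\nabla \dot S_{j'-1}b\|_{L^\infty}$ from the paraproduct commutator $[T_b,\phi_j]h$, and direct Bernstein/paraproduct bounds for the four off-diagonal terms. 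The Leibniz reduction $\partial_k[a,\phi_j]w=[a,\phi_j]\partial_kw+[\partial_ka,\phi_j]w$ is correct and is a legitimate organizational step; the cited references typically absorb the outer $\partial_k$ directly into the frequency-localized estimate (every term in the decomposition is spectrally supported in an annulus of radius $\sim 2^j$, so $\partial_k$ costs exactly a factor $2^j$) rather than splitting by Leibniz, but the two routes lead to the same five pieces and the same index bookkeeping, and your reduction reproduces the hypotheses~\eqref{cond:comm} exactly, as you check.

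Two small slips are worth flagging, neither of which breaks the argument. First, in the bound for $[T_b,\phi_j]h$ you write $\|\dot S_{j'-1}\nabla b\|_{L^\infty}\lesssim 2^{j'\mu}c_{j'}\|\nabla b\|_{\dot B^{n/p-\mu}_{p,1}}$; a low-frequency cut-off $\dot S_{j'-1}$ of a $\dot B^{-\mu}_{\infty,1}$ function gives $\lesssim 2^{j'\mu}\|\nabla b\|_{\dot B^{-\mu}_{\infty,1}}$ without the $c_{j'}$ factor (which would not be a single term of an $\ell^1$ sequence anyway); the summable sequence instead comes from the factor $\|\dot\Delta_{j'}h\|_{L^p}$. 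Second, the embedding $\dot B^{n/p+1-\mu}_{p,1}\hookrightarrow L^\infty$ that you invoke for the off-diagonal terms holds only when $\mu\le 1$, whereas in the application $\mu$ may be as large as $n/p+1$. That embedding is in fact not needed: the terms $T_{\dot\Delta_jh}b$, $R(b,\dot\Delta_jh)$ and $\dot\Delta_j(T_hb)$ are each handled directly by Bernstein on the individual dyadic blocks $\dot\Delta_{j'}b$, $\dot\Delta_{j''}h$ together with a geometric summation, and it is precisely the convergence of those geometric sums that produces the constraint $\tau\le n/p+1-\mu$. You correctly identify $\dot\Delta_jR(b,h)$ as the delicate term in the range $\tau\le0$, and the fix you describe (switching to the negative-index / duality form of the remainder estimate) is where the lower bound $\tau>-\min(n/p,n/p')-1$ genuinely enters.
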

\begin{prop}
\label{prop:comm2}
Let $A(D)$ be a Fourier multiplier of degree 0. 
Then the following estimate holds.
\begin{equation*}
\|[A(D),q]w\|_{\dot B^{\sigma+1}_{p,1}}
\le C \| \nabla q\|_{\dot B^{\frac{n}{p}-\nu}_{p,1}}
      \|w\|_{\dot B^{\sigma+\nu}_{p,1}},
\end{equation*}
whenever 
\begin{equation}
1\le p \le \infty, \quad
\nu\geq 0 \quad \text{and} \ 
-\min\bigl(\frac{n}{p},\frac{n}{p'}\bigr)-1 <\sigma\le\frac{n}{p}-\nu.
\end{equation}
\end{prop}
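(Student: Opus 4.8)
The plan is to peel off, via Bony's decomposition, the single piece of the commutator that genuinely uses the smoothness of the symbol $A$, and to treat everything else using the fact that $A(D)$ is bounded on every $\dot B^s_{p,1}$ (it is a smooth $0$-homogeneous Fourier multiplier, so each $\dot\Delta_jA(D)$ is convolution with a kernel $2^{jn}\check\Theta(2^j\cdot)$ with $\check\Theta$ Schwartz, uniformly in $j$). Writing $qw=T_qw+T'_wq$ with $T'_wq:=T_wq+R(q,w)$, and likewise $q\,A(D)w=T_qA(D)w+T'_{A(D)w}q$, and using that $\dot\Delta_j$ commutes with $A(D)$, I would start from the identity
$$[A(D),q]w=\sum_{j\in\Z}[A(D),\dot S_{j-1}q]\,\dot\Delta_jw+\bigl(A(D)(T'_wq)-T'_{A(D)w}q\bigr)=:\mathrm I+\mathrm{II}.$$
The sum $\mathrm I$ is the ``paraproduct commutator'', where the one-derivative smoothing must come from $A$ itself; in $\mathrm{II}$, on the other hand, $q$ enters only through its \emph{high}-frequency blocks, and there the gain of one derivative will instead be produced by the elementary fact that on each block $\dot\Delta_jq$ equals $2^{-j}$ times a family of $L^p$-multipliers (uniformly bounded in $j$) applied to $\widetilde{\dot\Delta}_j\nabla q$.

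For $\mathrm I$: each summand $\dot S_{j-1}q\,\dot\Delta_jw$ is spectrally supported in a dyadic annulus $2^j\cC$, on which $A(D)$ acts as convolution with $2^{jn}h(2^j\cdot)$, $h$ a fixed Schwartz function. I would then write $[A(D),\dot S_{j-1}q]\dot\Delta_jw(x)$ as the integral of $2^{jn}h(2^j(x-y))\bigl(\dot S_{j-1}q(y)-\dot S_{j-1}q(x)\bigr)\dot\Delta_jw(y)$, bound the difference by $|x-y|\,\|\nabla\dot S_{j-1}q\|_{L^\infty}$, and use $\||\cdot|\,2^{jn}h(2^j\cdot)\|_{L^1}=2^{-j}\||\cdot|h\|_{L^1}$ to get $\|[A(D),\dot S_{j-1}q]\dot\Delta_jw\|_{L^p}\lesssim 2^{-j}\|\nabla\dot S_{j-1}q\|_{L^\infty}\|\dot\Delta_jw\|_{L^p}$. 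Controlling $\|\nabla\dot S_{j-1}q\|_{L^\infty}$ blockwise by $\|\nabla q\|_{\dot B^{\frac np-\nu}_{p,1}}$ (Bernstein) and inserting $\|\dot\Delta_jw\|_{L^p}=2^{-j(\sigma+\nu)}d_j\|w\|_{\dot B^{\sigma+\nu}_{p,1}}$, the factors of $2^j$ cancel after multiplying by $2^{j(\sigma+1)}$ and an $\ell^1$ bound in $j$ remains; this handles $\mathrm I$ for \emph{all} $\sigma\in\R$ and $\nu\ge0$. For $\mathrm{II}$: boundedness of $A(D)$ gives $\|A(D)(T'_wq)\|_{\dot B^{\sigma+1}_{p,1}}\lesssim\|T'_wq\|_{\dot B^{\sigma+1}_{p,1}}$ and $\|A(D)w\|_{\dot B^{\sigma+\nu}_{p,1}}\lesssim\|w\|_{\dot B^{\sigma+\nu}_{p,1}}$, so it suffices to estimate $T'_wq$ and $T'_{A(D)w}q$ in $\dot B^{\sigma+1}_{p,1}$; replacing each $\dot\Delta_jq$ by $2^{-j}\widetilde{\dot\Delta}_j\nabla q$, the paraproduct parts $T_wq$ and $T_{A(D)w}q$ are controlled by placing the low-frequency factor in $L^\infty$, which costs at most $2^{j(\frac np-\sigma-\nu)}$ — affordable precisely because of the hypothesis $\sigma\le\frac np-\nu$.

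The only step I expect to be genuinely delicate is the sharp lower bound $\sigma>-\min(n/p,n/p')-1$, which should come entirely from the two remainder pieces $R(q,w)$ and $R(A(D)w,q)$. For those I would estimate $\dot\Delta_m\bigl(\widetilde{\dot\Delta}_jw\,\dot\Delta_jq\bigr)$ for $j\gtrsim m$ and, after trading $\dot\Delta_jq$ for $2^{-j}\nabla q$ as above, bound the output block by Bernstein's inequality: passing to $L^p$ from $L^{p/2}$ when $p\ge2$, and from $L^1$ via Hölder with exponent $p'$ (together with a further Bernstein step on the $q$-factor) when $p\le2$. This costs $2^{mn/p}$, resp.\ $2^{mn/p'}$, i.e.\ exactly $2^{m\min(n/p,n/p')}$ in each case, and leaves a geometric series in $m-j$ that converges precisely when $\sigma+1>-\min(n/p,n/p')$. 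Collecting the bounds for $\mathrm I$ and $\mathrm{II}$ then yields the stated estimate; apart from this Bernstein optimization and the kernel estimate for $\mathrm I$, everything is routine paraproduct bookkeeping.
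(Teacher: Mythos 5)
Your argument is correct and is essentially the standard proof of this commutator estimate: the paper does not give its own argument but refers to \cite{bahouri-chemin-danchin}, \cite{danchin2}, \cite{danchin-mucha}, \cite{runst-sickel}, and those references all proceed exactly as you do — Bony decomposition isolating $\sum_j[A(D),\dot S_{j-1}q]\dot\Delta_jw$, a first-order Taylor/kernel estimate (using that $\dot\Delta_jA(D)$ has kernel $2^{jn}h(2^j\cdot)$ with $\|\,|\cdot|\,h\|_{L^1}<\infty$, uniformly in $j$) to gain the factor $2^{-j}$ there, boundedness of $A(D)$ on $\dot B^s_{p,1}$ plus $\dot\Delta_jq\sim2^{-j}\widetilde{\dot\Delta}_j\nabla q$ for the two $T_\cdot q$ pieces (which is where $\sigma\le n/p-\nu$ enters), and Bernstein on the remainders (which is where $\sigma>-\min(n/p,n/p')-1$ enters). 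The bookkeeping in your sketch is consistent, so this matches the cited proofs and fills in the details correctly.
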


%%%%%%%%%%%%%%%%%%

\subsection{Estimates of flow}

We here recall  {\sl flow estimates} that have been proved in 
\cites{danchin2,danchin-mucha}. 
\begin{prop}\label{prop:flow1}
Let $1\le p<\infty$ and $v\in E_p(T)$.
There exists a positive constant $\tilde c$ (independent of $T$) such that if 
\begin{equation}\nonumber
\int_0^T \|Dv\|_{\dot B^{\frac{n}{p}}_{p,1}} dt\le \tilde c
\end{equation}
then for all $t\in[0,T]$, we have
\begin{equation}\nonumber
  \|\id-\adj(DX_v(t))\|_{\dot B^{\frac{n}{p}}_{p,1}} 
\lesssim \|D\overline v\|_{L^1_T(\dot B^{\frac{n}{p}}_{p,1})},
\end{equation}
\begin{equation}\nonumber
\|\id - A_v(t)\|_{\dot B^{\frac{n}{p}}_{p,1}} 
\lesssim \|D\overline v\|_{L^1_T(\dot B^{\frac{n}{p}}_{p,1})},
\end{equation}
\begin{equation}\nonumber
\|J_v^{\pm1}(t)-1\|_{\dot B^{\frac{n}{p}}_{p,1}} 
\lesssim \|D\overline v\|_{L^1_T(\dot B^{\frac{n}{p}}_{p,1})}.
\end{equation}
Furthermore, if $\overline w$ is
a vector field such that 
$\overline w \in L^1(0,T ; \dot B^{\frac{n}{p}}_{p,1})$,
then
\begin{equation}\nonumber
\|\adj(DX_v)D_{A_v}(\overline w) - D(\overline w)\|_{\dot B^{\frac{n}{p}}_{p,1}} 
\lesssim \|D\overline v\|_{L^1_T(\dot B^{\frac{n}{p}}_{p,1})}
 \|D\overline w\|_{L^1_T(\dot B^{\frac{n}{p}}_{p,1})},
\end{equation}
\begin{equation}\nonumber
\|\adj(DX_v)\div_{A_v}(\overline w) 
   - \div \overline w\, \id\|_{\dot B^{\frac{n}{p}}_{p,1}} 
\lesssim \|D\overline v\|_{L^1_T(\dot B^{\frac{n}{p}}_{p,1})}
         \|D\overline w\|_{L^1_T(\dot B^{\frac{n}{p}}_{p,1})}.
\end{equation}
\end{prop}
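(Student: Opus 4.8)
The plan is to exploit the integral representation \eqref{def-flow2} of the flow, $X_v(t,y)=y+\int_0^t v(\tau,y)\,d\tau$, which gives at once $DX_v(t)=\id+\int_0^t Dv(\tau)\,d\tau$. Writing $Z(t):=DX_v(t)-\id=\int_0^t Dv(\tau)\,d\tau$, the smallness hypothesis $\int_0^T\|Dv\|_{\dot B^{n/p}_{p,1}}\,dt\le\tilde c$ ensures $\|Z(t)\|_{\dot B^{n/p}_{p,1}}\le\tilde c$ for all $t\in[0,T]$. Since $\dot B^{n/p}_{p,1}$ is a Banach algebra (Proposition \ref{prop:prd} with $\sigma=n/p$, $\nu=0$) continuously embedded in $L^\infty$, for $\tilde c$ small the matrix $\id+Z(t)$ is invertible and all the quantities $A_v=(\id+Z)^{-1}$, $\adj(DX_v)=J_v\,A_v$, and $J_v^{\pm1}=(\det(\id+Z))^{\pm1}$ are smooth matrix- or scalar-valued functions of the entries of $Z$ that vanish (after subtracting $\id$ or $1$) when $Z=0$. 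Hence the first three estimates follow directly from the composition estimate of Proposition \ref{prop:comp} applied entrywise to $F(Z):=A_v-\id$, $F(Z):=\adj(DX_v)-\id$, $F(Z):=J_v^{\pm1}-1$ (each such $F$ is smooth near $0$ and vanishes at $0$), combined with $\|Z(t)\|_{\dot B^{n/p}_{p,1}}\le\|Dv\|_{L^1_T(\dot B^{n/p}_{p,1})}$.

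For the last two estimates I would first expand the ``twisted'' operators against the untwisted ones. Write $D_{A_v}(w)-D(w)=\tfrac12\big(Dw\cdot(A_v-\id)+{}^t(A_v-\id)\cdot\nabla w\big)$ and similarly $\div_{A_v}w-\div w={}^t(A_v-\id):\nabla w$; then
\begin{align*}
\adj(DX_v)D_{A_v}(w)-D(w)
&=\big(\adj(DX_v)-\id\big)D(w)+\adj(DX_v)\big(D_{A_v}(w)-D(w)\big),
\end{align*}
and likewise for the divergence term. Each summand is a product of a factor controlled in $\dot B^{n/p}_{p,1}$ by $\|Dv\|_{L^1_T(\dot B^{n/p}_{p,1})}$ (namely $\adj(DX_v)-\id$, $A_v-\id$, or $\adj(DX_v)$ itself, the latter being $\id$ plus a small term) with a factor $D w$ or $\nabla w$, so Proposition \ref{prop:prd} (algebra property of $\dot B^{n/p}_{p,1}$) gives the bound by $\|Dv\|_{L^1_T(\dot B^{n/p}_{p,1})}\|Dw\|_{L^1_T(\dot B^{n/p}_{p,1})}$ after taking the $L^\infty_T$ norm in the $v$-factor and integrating the $w$-factor in time. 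One must be a little careful that $\adj(DX_v)$ contributes an $\id$ term whose product with $D_{A_v}(w)-D(w)$ is already of the right quadratic form, while its small part multiplied by $D(w)$ is at worst cubic in the small quantities and hence harmless.

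The main technical point — the one step that is not purely mechanical — is justifying the composition estimates for $A_v$, $\adj(DX_v)$, $J_v^{\pm1}$ uniformly in $t$: one needs $Z(t)$ to take values in a fixed compact subset $J$ of the domain of analyticity of the relevant rational functions (inversion, determinant), which is exactly what the smallness constant $\tilde c$ buys, together with the embedding $\dot B^{n/p}_{p,1}\hookrightarrow L^\infty$ to control the pointwise range of $Z(t)$. Once this is in place, everything else reduces to the product law of Proposition \ref{prop:prd} and bookkeeping of which factors are taken in $L^\infty_T$ versus $L^1_T$. Since these estimates are precisely those established in \cites{danchin2,danchin-mucha}, I would simply organize the argument as above and refer to those papers for the details of the composition step.
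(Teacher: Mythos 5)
Your sketch is correct, and it follows the standard route. A point worth noting first: the paper does not prove Proposition \ref{prop:flow1} at all — it is presented in the Appendix with the remark that these estimates ``have been proved in \cites{danchin2,danchin-mucha}'' — so there is no in-paper argument to compare against; the comparison is really with the proofs in those references, which proceed exactly as you outline (write $DX_v=\id+Z$ with $Z(t)=\int_0^t Dv\,d\tau$, exploit the smallness of $\|Z\|_{L^\infty_T(\dot B^{n/p}_{p,1})}$, the algebra property of $\dot B^{n/p}_{p,1}$ and the embedding into $L^\infty$, then expand the twisted differential operators and track $L^\infty_T$ versus $L^1_T$ factors).

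The only caveat is your use of Proposition \ref{prop:comp}: as stated in the paper it is for a scalar $F:I\subset\R\to\R$, whereas $A_v-\id$, $\adj(DX_v)-\id$, $J_v^{\pm1}-1$ are (matrix- or scalar-valued) functions of the $n^2$ entries of $Z$. The extension of the composition estimate to smooth multivariable $F$ with $F(0)=0$ is standard, but since the paper does not record it you should either note this explicitly or, more economically, observe that it is not actually needed here: $\adj(DX_v)-\id$ and $J_v-1$ are polynomials of degree $\le n$ in $Z$ with no constant term, so the algebra property alone bounds them by $C\|Z\|_{\dot B^{n/p}_{p,1}}$ once $\|Z\|$ is small; and $A_v-\id=\sum_{k\ge1}(-Z)^k$, $J_v^{-1}-1=\sum_{k\ge1}\bigl(-(J_v-1)\bigr)^k$ follow by Neumann series, which converge in $\dot B^{n/p}_{p,1}$ under the smallness hypothesis on $\tilde c$ because of the algebra inequality. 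With that cosmetic adjustment your argument is exactly the one in \cites{danchin2,danchin-mucha}.
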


\begin{prop}\label{prop:flow2}
Let $1\le p<\infty$ and 
$\overline v_1$ and $\overline v_2\in E_p(T)$
satisfying 
\begin{equation}\nonumber
\int_0^T \|Dv\|_{\dot B^{\frac{n}{p}}_{p,1}} dt\le \tilde c
\end{equation}
and $\dv:=\overline v_2-\overline v_1$.
Then for all $t\in[0,T]$, we have
\begin{equation}\nonumber
\|A_{v_2}(t)-A_{v_1}(t)\|_{\dot B^{\frac{n}{p}}_{p,1}} 
\lesssim \|D\dv\|_{L^1_T(\dot B^{\frac{n}{p}}_{p,1})},
\end{equation}
\begin{equation}\nonumber
\|\adj(DX_{v_2}(t))-\adj(DX_{v_1}(t))\|_{\dot B^{\frac{n}{p}}_{p,1}} 
\lesssim \|D\dv\|_{L^1_T(\dot B^{\frac{n}{p}}_{p,1})},
\end{equation}
\begin{equation}\nonumber
\|J_{v_2}^{\pm1}(t)-J_{v_1}^{\pm1}(t)\|_{\dot B^{\frac{n}{p}}_{p,1}} 
\lesssim \|D\dv\|_{L^1_T(\dot B^{\frac{n}{p}}_{p,1})}.
\end{equation}
\end{prop}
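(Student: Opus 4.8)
The plan is to reduce every difference estimate to one elementary fact. With the flow defined directly by \eqref{def-flow2} one has $DX_{v_i}(t)=\id+Z_i(t)$, where $Z_i(t):=\int_0^tDv_i\,d\tau$, so that $DX_{v_2}(t)-DX_{v_1}(t)=\int_0^tD\dv\,d\tau$ and therefore
\begin{equation*}
\sup_{t\in[0,T]}\|DX_{v_2}(t)-DX_{v_1}(t)\|_{\dot B^{\frac np}_{p,1}}\le\|D\dv\|_{L^1_T(\dot B^{\frac np}_{p,1})}.
\end{equation*}
The smallness hypothesis forces $\|Z_i(t)\|_{\dot B^{\frac np}_{p,1}}\le\tilde c$ for all $t\in[0,T]$, and the only analytic ingredient needed beyond this is that $\dot B^{\frac np}_{p,1}$ is an algebra embedded in $L^\infty$ (Proposition \ref{prop:prd} with $\nu=0$, $\sigma=\frac np$). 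Throughout I would manipulate only $DX_{v_i}-\id$, $A_{v_i}-\id$ and $J_{v_i}^{\pm1}-1$, so that every object is genuinely in $\dot B^{\frac np}_{p,1}$ and the constant $\id$ (or $1$) only plays the role of a unit.

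For $A_{v_i}$ I would expand the Neumann series $A_{v_i}(t)=(\id+Z_i(t))^{-1}=\sum_{k\ge0}(-Z_i(t))^k$, absolutely convergent in $\dot B^{\frac np}_{p,1}$ once $\tilde c$ is small, and apply the telescoping identity $a^k-b^k=\sum_{j=0}^{k-1}a^j(a-b)b^{k-1-j}$:
\begin{equation*}
A_{v_2}(t)-A_{v_1}(t)=\sum_{k\ge1}\sum_{j=0}^{k-1}(-Z_2(t))^j\bigl(Z_1(t)-Z_2(t)\bigr)(-Z_1(t))^{k-1-j}.
\end{equation*}
Estimating each summand in $\dot B^{\frac np}_{p,1}$ by the algebra property produces a bound of the form $\bigl(\sum_{k\ge1}k(C\tilde c)^{k-1}\bigr)\|Z_1(t)-Z_2(t)\|_{\dot B^{\frac np}_{p,1}}$, and the series converges for $\tilde c$ small, which gives the first inequality. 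For the adjugate, the key point is that after writing $M=\id+Z$ the map $M\mapsto\adj(M)-\id$ becomes a fixed matrix-valued polynomial $P(Z)$ in the entries of $Z$, of degree at most $n-1$, with $P(0)=0$; hence $\adj(DX_{v_2})-\adj(DX_{v_1})=P(Z_2)-P(Z_1)$ and the same monomial-by-monomial telescoping, now with only finitely many terms, bounds it by $\|Z_1(t)-Z_2(t)\|_{\dot B^{\frac np}_{p,1}}\le\|D\dv\|_{L^1_T(\dot B^{\frac np}_{p,1})}$. Running the identical argument on the polynomial $M\mapsto\det M-1$ gives $\|J_{v_2}(t)-J_{v_1}(t)\|_{\dot B^{\frac np}_{p,1}}\lesssim\|D\dv\|_{L^1_T(\dot B^{\frac np}_{p,1})}$.

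It remains to pass from $J_{v_i}$ to $J_{v_i}^{-1}$. Since $\|J_{v_i}(t)-1\|_{\dot B^{\frac np}_{p,1}}\le\tilde c$ and $\dot B^{\frac np}_{p,1}\hookrightarrow L^\infty$, the $J_{v_i}$ stay uniformly away from $0$ on $[0,T]$; the composition estimate of Proposition \ref{prop:comp}, applied to $F(z)=(1+z)^{-1}-1$, then gives $\|J_{v_i}^{-1}(t)-1\|_{\dot B^{\frac np}_{p,1}}\lesssim\tilde c$. I would finish from the identity
\begin{equation*}
J_{v_2}^{-1}(t)-J_{v_1}^{-1}(t)=J_{v_1}^{-1}(t)\bigl(J_{v_1}(t)-J_{v_2}(t)\bigr)J_{v_2}^{-1}(t),
\end{equation*}
the algebra property, and the difference estimate for $J_{v_2}-J_{v_1}$ just established. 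I do not anticipate a real obstacle: the whole proof is a string of multilinear estimates in the algebra $\dot B^{\frac np}_{p,1}$, and the only points demanding a little care are the uniformity in $t$ (automatic, since every time integral appearing is dominated by the relevant $L^1_T$ norm) and the bookkeeping that keeps the constant part $\id$ or $1$ separated from the genuinely $\dot B^{\frac np}_{p,1}$ part. These are exactly the difference-analogues of Proposition \ref{prop:flow1} and are carried out along these lines in \cites{danchin2,danchin-mucha}.
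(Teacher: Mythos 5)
Your argument is correct, and it is worth noting that the paper itself does not reprove Proposition~\ref{prop:flow2} but simply recalls it from \cites{danchin2,danchin-mucha}; the Neumann-series expansion of $A_v=(\id+\int_0^tDv\,d\tau)^{-1}$ together with the telescoping of power-series/polynomial differences, the polynomial viewpoint on $\adj$ and $\det$, and the resolvent-type identity for $J^{-1}$ (with the $J_i^{-1}=1+(J_i^{-1}-1)$ bookkeeping) are exactly the devices used in those references. So you have taken essentially the same route as the source the paper cites, and the proof is complete.
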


%%%%%%%%%%%%%%%%%%%%%%%%%%%%%%%%%%%%%%

\subsection{Lagrangean coordinates}
Let $X$ be a $\cC^1$-diffeomorphism over $\R^n$. 
For a vector-valued function $H:\R^n\to\R^m$, denote
$\bar H(y):=H(x)$ with $x=X(y)$.
The chain rule states that 
\begin{equation}
\label{chain}D_y\bar H(y) = D_xH(X(y)) \cdot D_yX(y)\quad\hbox{and}\quad
\nabla_y\bar H(y) = \nabla_yX(y) \cdot \nabla_xH(X(y)).
\end{equation}
Hence,  setting $A(y)=(D_yX(y))^{-1}=D_xX^{-1}(X(y)),$ we have 
\begin{equation}\nonumber
D_xH(X(y)) = D_y\bar H(y) \cdot A(y)\quad\hbox{and}\quad 
\nabla_x H(X(y))={}^t\! A(y)\cdot\nabla_y\bar H(y).
\end{equation}
%\begin{rem} For a square matrix $A$,  $\adj(A)=(\det A) A^{-1}$,
%if $A$ is invertible. We also have the following elementary properties:
%\begin{equation*}\begin{split}&^t(A^{-1})=(^tA)^{-1}, \quad
%\det(A^{-1})=(\det A)^{-1}, \\&\det(^tA)=\det A, \quad
%\adj(^tA)=^t \adj A.\end{split}\end{equation*}
%It holds that if $A$ and $B$ are square matrices and $A$ is invertible,  then 
%\begin{equation}\nonumber \tr (ABA^{-1}) = \tr B,\end{equation}
%since the trace is invariant under cyclic permutations, or equivalently, 
%\begin{equation}\nonumber\tr(AB)=\tr(BA).\end{equation}\end{rem}

\begin{prop}[\cite{danchin2}\cite{danchin-mucha}]
\label{flow-regr}
Let $X$ be a globally bi-Lipschitz
diffeomorphism of $\R^n$ and 
$(s,p)$ with $1\le p<\infty$ 
and $-\Frac{n}{p'}<s\le\Frac{n}{p}\cdotp$ Then $a \mapsto a\circ X$ is a 
self-map over $\dot B^s_{p,1}$ in  the following cases:
\begin{enumerate}
\item $s\in(0,1)$,
\item $s\in(-1,0]$ and $J_{X^{-1}}$ is in the 
multiplier space $\mathcal{M}(\dot B^s_{p,1})$,
\item $s\ge1$ and $(DX-\id)\in \dot B^s_{p,1}$.
\end{enumerate}
\end{prop}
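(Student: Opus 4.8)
\medbreak\noindent\textbf{Plan of proof.} The plan is to treat the three cases separately, with case (1) as the conceptual core from which (2) and (3) are bootstrapped. Throughout I use the following elementary consequences of $X$ being a globally bi-Lipschitz diffeomorphism of $\R^n$: by Rademacher's theorem $X$ and $X^{-1}$ are differentiable a.e., $DX,DX^{-1}\in L^\infty$, the chain rule applied to $X^{-1}\circ X=\id$ forces $0<c\le J_{X^{\pm1}}\le C$ a.e., and $|X(x)-X(y)|\simeq|x-y|$. For case (1) (and its $\ell^\infty$ variant, reused in case (2)), note that since $s\le n/p$ one has $\dot B^s_{p,1}\hookrightarrow L^q$ for a suitable $q\in[p,\infty]$, so that $a\circ X$ is a genuine $L^q$ function by change of variables, hence a tempered distribution; the decompositions $a=\sum_{k\in\Z}\dot\Delta_k a$ and $a\circ X=\sum_j\dot\Delta_j(a\circ X)$ are then legitimate and may be manipulated termwise, reducing everything to estimating $\dot\Delta_j\bigl((\dot\Delta_k a)\circ X\bigr)$ in $L^p$ for all $(j,k)$.

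\emph{Case (1): $0<s<1$.} I would bound $\|\dot\Delta_j((\dot\Delta_k a)\circ X)\|_{L^p}$ in two complementary ways. First, since $\dot\Delta_j$ is convolution with a kernel of $L^1$ norm independent of $j$, and the change of variables $y=X(x)$ together with $\|J_{X^{-1}}\|_{L^\infty}<\infty$ gives $\|(\dot\Delta_k a)\circ X\|_{L^p}\lesssim\|\dot\Delta_k a\|_{L^p}$, one gets $\|\dot\Delta_j((\dot\Delta_k a)\circ X)\|_{L^p}\lesssim\|\dot\Delta_k a\|_{L^p}$. Second, $(\dot\Delta_k a)\circ X\in\dot W^{1,p}$ and by the chain rule, $\|DX\|_{L^\infty}<\infty$ and the Bernstein inequality, $\|\nabla((\dot\Delta_k a)\circ X)\|_{L^p}\lesssim 2^k\|\dot\Delta_k a\|_{L^p}$; combined with the smoothing bound $\|\dot\Delta_j g\|_{L^p}\lesssim 2^{-j}\|\nabla g\|_{L^p}$, this gives $\|\dot\Delta_j((\dot\Delta_k a)\circ X)\|_{L^p}\lesssim 2^{k-j}\|\dot\Delta_k a\|_{L^p}$. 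Using the first bound for $k\ge j$ and the second for $k<j$, and writing $b_k:=2^{ks}\|\dot\Delta_k a\|_{L^p}$, one obtains $2^{js}\|\dot\Delta_j(a\circ X)\|_{L^p}\lesssim\sum_{k\ge j}2^{(j-k)s}b_k+\sum_{k<j}2^{(s-1)(j-k)}b_k$, a discrete convolution of $(b_k)$ against an $\ell^1(\Z)$ sequence precisely because $s>0$ (first series) and $s<1$ (second series); summing over $j$ and invoking Young's inequality for $\ell^1\ast\ell^1$ gives $\|a\circ X\|_{\dot B^s_{p,1}}\lesssim\|a\|_{\dot B^s_{p,1}}$. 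Only the bi-Lipschitz property of $X$ was used.

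\emph{Case (2): $-1<s\le 0$.} Now the weight $2^{js}$ no longer controls the low frequencies (a bi-Lipschitz map may push frequencies downward), so I would argue by duality, using $(\dot B^s_{p,1})'=\dot B^{-s}_{p',\infty}$ with $-s\in[0,1)$. The first subtlety is to make sense of $a\circ X$: one sets $\langle a\circ X,\varphi\rangle:=\langle a,\,J_{X^{-1}}\,(\varphi\circ X^{-1})\rangle$ for test $\varphi$, which is well posed since $\varphi\circ X^{-1}$ is a compactly supported Lipschitz function and $J_{X^{-1}}\in L^\infty$. Then $\|a\circ X\|_{\dot B^s_{p,1}}\lesssim\|a\|_{\dot B^s_{p,1}}\sup\{\|J_{X^{-1}}(\varphi\circ X^{-1})\|_{\dot B^{-s}_{p',\infty}}:\|\varphi\|_{\dot B^{-s}_{p',\infty}}\le1\}$, and the supremum is finite: for $-s\in(0,1)$ the $\ell^\infty$ version of the case-(1) computation shows $\varphi\mapsto\varphi\circ X^{-1}$ is bounded on $\dot B^{-s}_{p',\infty}$, while multiplication by $J_{X^{-1}}$ is bounded on $\dot B^{-s}_{p',\infty}$ thanks to the hypothesis $J_{X^{-1}}\in\mathcal M(\dot B^s_{p,1})=\mathcal M(\dot B^{-s}_{p',\infty})$ (equality by self-adjointness of pointwise multiplication). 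The endpoint $s=0$ is the most delicate, since one cannot compose freely on $\dot B^0_{p',\infty}$; there I would split $\varphi\circ X^{-1}$ into low and high frequency parts, estimate the low part in $L^{p'}\hookrightarrow\dot B^0_{p',\infty}$ and the high part using $J_{X^{-1}}\in\mathcal M$.

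\emph{Case (3): $1\le s\le n/p$.} Here I would induct on $\lceil s\rceil$, using $\|f\|_{\dot B^s_{p,1}}\simeq\|\nabla f\|_{\dot B^{s-1}_{p,1}}$ together with the chain rule $\nabla(a\circ X)={}^t\!DX\cdot(\nabla a)\circ X$. Splitting ${}^t\!DX=\id+({}^t\!DX-\id)$, the first term equals $(\nabla a)\circ X$ with $\nabla a\in\dot B^{s-1}_{p,1}$ and is controlled by the inductive hypothesis (or directly by case (1), resp. case (2), when $s-1\in(0,1)$, resp. $s-1=0$; in the latter situation $J_{X^{-1}}\in\mathcal M(\dot B^0_{p,1})$ follows from $(DX-\id)\in\dot B^{n/p}_{p,1}$ via Proposition~\ref{prop:prd}). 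For the second term $({}^t\!DX-\id)\bigl((\nabla a)\circ X\bigr)$ I would invoke the product law of Proposition~\ref{prop:prd}: this is exactly the step that uses $(DX-\id)\in\dot B^s_{p,1}$, and it is transparent in the main case $s=n/p$ needed for the applications, since then $\dot B^{n/p}_{p,1}$ is a Banach algebra and multiplication by ${}^t\!DX-\id$ maps $\dot B^{n/p-1}_{p,1}$ to itself. Collecting the estimates gives $\|a\circ X\|_{\dot B^s_{p,1}}\lesssim(1+\|DX-\id\|_{\dot B^s_{p,1}})\,\|a\|_{\dot B^s_{p,1}}$.

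The crux, and the step I expect to demand the most care, is the matching between the smoothness index $s$ and the limited regularity of the change of variables: bi-Lipschitz alone suffices only in the window $0<s<1$, and outside it the missing control must be bought back — through the multiplier assumption on $J_{X^{-1}}$ when $s\le 0$ (where even defining $a\circ X$ is an issue and the endpoint $s=0$ is the most awkward) and through the product estimate for $DX-\id$ when $s\ge 1$ (where the index bookkeeping in the induction is the fiddly point). The conceptual heart of the whole statement, however, is the elementary two-bound dyadic estimate of case (1).
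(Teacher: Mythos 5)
You should first note that the paper itself does not prove Proposition \ref{flow-regr}: it is quoted from \cite{danchin2} and \cite{danchin-mucha}, so there is no in-paper argument to match. Your reconstruction follows exactly the route of those references — the two-sided dyadic bound for $s\in(0,1)$, duality for the negative range, and chain rule plus product laws for $s\ge1$ — and your case (1) is complete and correct: the pair of bounds $\|\dot\Delta_j((\dot\Delta_k a)\circ X)\|_{L^p}\lesssim\min(1,2^{k-j})\|\dot\Delta_k a\|_{L^p}$ and the $\ell^1$ convolution argument is precisely the standard proof, and it is where all the analytic content lives.

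There are, however, two genuine soft spots at borderline indices. First, the endpoint $s=0$ of case (2): your duality reduction needs $\varphi\mapsto\varphi\circ X^{-1}$ to be bounded on $\dot B^{-s}_{p',\infty}$, and at $-s=0$ this fails for a general bi-Lipschitz map — the high-frequency sum $\sum_{k\ge j}\|\dot\Delta_k\varphi\|_{L^{p'}}$ is not controlled by $\sup_k\|\dot\Delta_k\varphi\|_{L^{p'}}$, reflecting the fact that a Lipschitz change of variables can spread frequencies downward. Your proposed fix (estimating ``the low part'' in $L^{p'}$) does not address this, because the low-frequency part of $\varphi\circ X^{-1}$ is not the composition of the low-frequency part of $\varphi$; the $s=0$ case needs a separate argument, and this case is actually invoked by the paper (e.g.\ $\theta_0\in\dot B^0_{p,1}$ when $p=n/2$). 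Second, in case (3) the product step $({}^t\!DX-\id)\cdot(\nabla a)\circ X$: Proposition \ref{prop:prd} applied with $u=DX-\id\in\dot B^s_{p,1}$ and $v\in\dot B^{s-1}_{p,1}$ only lands in $\dot B^{2s-1-\frac np}_{p,1}$, which coincides with $\dot B^{s-1}_{p,1}$ only when $s=\frac np$; for $1\le s<\frac np$ you must additionally exploit $DX-\id\in L^\infty$ and run a finer paraproduct decomposition (with $s=1$ again borderline for the remainder term). In the paper's actual applications this is harmless because the flows constructed there satisfy $DX-\id\in\cC([0,T];\dot B^{\frac np}_{p,1})$, which is a multiplier on every $\dot B^\sigma_{p,1}$ in the admissible range, but as a proof of the proposition as literally stated the induction does not close for $s<\frac np$.
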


\begin{prop}[\cite{danchin2}\cite{danchin-mucha}]
\label{prop:l-co}
Let $K$ be a $\cC^1$-scalar function over $\R^n$ and
$H$ be a $\cC^1$-vector field.
If $X$ is a $\cC^1$-diffeomorphism such that $J:=\det(D_yX)>0$,
then
\begin{equation}\nonumber
\overline{\nabla_x K} = J^{-1} \divy ( \adj (D_yX)\overline K),
\end{equation}
\begin{equation}\nonumber
\overline{\divx H} = J^{-1} \divy ( \adj (D_yX)\overline H),
\end{equation}
where $\adj(D_yX)$ is the adjugate matrix of $D_yX$.
\end{prop}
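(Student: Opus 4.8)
The plan is to reduce both identities to the \emph{Piola identity}: with the paper's convention for the divergence of a matrix field, the rows of the adjugate matrix of $D_yX$ are divergence free, i.e.
\begin{equation*}
\divy\bigl(\adj(D_yX)\bigr)=0,\qquad\text{equivalently}\qquad\sum_i\partial_{y_i}\bigl(\adj(D_yX)\bigr)_{ij}=0\ \ \text{for every }j.
\end{equation*}
For a $\cC^2$ diffeomorphism this is the classical algebraic computation obtained by expanding the cofactors and using the symmetry $\partial_i\partial_jX=\partial_j\partial_iX$. Since $X$ is here only $\cC^1$, the matrix $\adj(D_yX)$ is merely continuous and the identity has to be read in $\cD'(\R^n)$; I would establish it by mollification, approximating $X$ locally by smooth diffeomorphisms $X_\ep$, observing that $\adj(D_yX_\ep)\to\adj(D_yX)$ locally uniformly (the adjugate being polynomial in the entries of $D_yX_\ep$), and passing to the limit in $\divy(\adj(D_yX_\ep))=0$. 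This regularity point is the only delicate step; everything else is bookkeeping.

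Granting the Piola identity, the gradient formula follows from the chain rule \eqref{chain}. Indeed \eqref{chain} gives $\nabla_y\overline K={}^t(D_yX)\,\overline{\nabla_xK}$, hence $\overline{\nabla_xK}={}^tA\,\nabla_y\overline K$ with $A=(D_yX)^{-1}=J^{-1}\adj(D_yX)$, so that $\overline{\nabla_xK}=J^{-1}\,{}^t\!\adj(D_yX)\,\nabla_y\overline K$. On the other hand, Leibniz's rule together with the Piola identity yields, componentwise,
\begin{equation*}
\bigl(\divy(\adj(D_yX)\overline K)\bigr)^j
=\sum_i\Bigl(\partial_{y_i}\bigl(\adj(D_yX)\bigr)_{ij}\Bigr)\overline K
 +\sum_i\bigl(\adj(D_yX)\bigr)_{ij}\,\partial_{y_i}\overline K
=\bigl({}^t\!\adj(D_yX)\,\nabla_y\overline K\bigr)^j,
\end{equation*}
and comparing the two expressions gives $\overline{\nabla_xK}=J^{-1}\divy(\adj(D_yX)\overline K)$.

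The divergence formula is obtained by the same mechanism. From \eqref{chain} one has $D_xH(X(y))=D_y\overline H(y)\cdot A(y)$, whence $\overline{\divx H}=\tr\bigl(D_y\overline H\cdot A\bigr)=\tr\bigl(A\cdot D_y\overline H\bigr)$ by cyclicity of the trace. Expanding $\divy(\adj(D_yX)\overline H)$ by Leibniz and discarding the term $\sum_i\partial_{y_i}(\adj(D_yX))_{ij}=0$ leaves $\sum_{i,j}(\adj(D_yX))_{ij}\,\partial_{y_i}\overline H_j=\tr\bigl(\adj(D_yX)\,D_y\overline H\bigr)=J\,\tr\bigl(A\,D_y\overline H\bigr)$, which equals $J\,\overline{\divx H}$; dividing by $J$ concludes. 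As already flagged, the main obstacle is the justification of the Piola identity for a merely $\cC^1$ map; once that is in hand both identities are immediate (and hold everywhere, by continuity, whenever $X$ is $\cC^2$), and they then serve as the algebraic backbone for recasting \eqref{CNSE} into the Lagrangian form \eqref{CNS-l}.
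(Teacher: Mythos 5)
Your proof is correct, and the route you take — the Piola identity $\divy\bigl(\adj(D_yX)\bigr)=0$, the Leibniz rule, and the chain--rule identities recorded just before the Proposition — is the natural one. Both algebraic reductions check out: for the gradient, the Leibniz expansion of $\divy\bigl(\adj(D_yX)\overline K\bigr)$ collapses to ${}^t\!\adj(D_yX)\,\nabla_y\overline K$ once the Piola term is discarded, which matches $J\,{}^t\!A\,\nabla_y\overline K=J\,\overline{\nabla_x K}$; for the divergence, the surviving term is $\sum_{i,j}\bigl(\adj(D_yX)\bigr)_{ij}\partial_{y_i}\overline H^j=\tr\bigl(\adj(D_yX)\,D_y\overline H\bigr)=J\,\tr\bigl(A\,D_y\overline H\bigr)=J\,\overline{\divx H}$. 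The one delicate step you flag is handled correctly: since $\adj$ is polynomial in the entries of $D_yX$, mollifying $X$ gives $\adj(D_yX_\ep)\to\adj(D_yX)$ locally uniformly, and the smooth Piola identity is purely algebraic and does not require $X_\ep$ to be a diffeomorphism, so the limit passes. A slightly cleaner variant that sidesteps Piola entirely is to argue by duality: for $\varphi\in\cC^\infty_c$, set $\psi:=\varphi\circ X^{-1}$ and use $\nabla_y\varphi={}^t(D_yX)\,(\nabla_x\psi)\!\circ\!X$ together with the pointwise identity ${}^t\!\adj(D_yX)\,{}^t(D_yX)=J\,\id$ to get
\begin{equation*}
\int \divy\bigl(\adj(D_yX)\overline H\bigr)\,\varphi\,dy
=-\int \overline H\cdot{}^t\!\adj(D_yX)\,\nabla_y\varphi\,dy
=-\int J\,\overline H\cdot(\nabla_x\psi)\!\circ\!X\,dy,
\end{equation*}
and then the change of variables $x=X(y)$, $dx=J\,dy$, turns the right-hand side into $-\int H\cdot\nabla_x\psi\,dx=\int(\divx H)\,\psi\,dx=\int J\,\overline{\divx H}\,\varphi\,dy$. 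This handles the $\cC^1$ hypothesis directly with no regularization, and the Piola identity is recovered as the special case $H\equiv e_k$ rather than appearing as a prerequisite.
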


{}From the above proposition, we infer the following relations:
\begin{equation*}
\begin{split}
\overline{\Delta_x u} 
&= J^{-1} \divy (\adj(D_yX)\overline{\nabla_xu}) \\
&= J^{-1} \divy (\adj(D_yX) (^tA) \nabla_y\overline u), \\
\end{split}
\end{equation*}
\begin{equation*}
\begin{split}
\overline{\nabla_x\divx u} 
&= J^{-1} \divy (\adj(D_yX)\overline{\div_xu}) \\
&= J^{-1} \divy (\adj(D_yX) (^tA): \nabla_y\overline u), \\
\end{split}
\end{equation*}
\begin{equation*}
\overline{\nabla_x P} 
= J^{-1} \divy (\adj(D_yX)\overline{P}).
\end{equation*}
%where $A=(D_yX(y))^{-1}$ and $J=\det(D_yX)$.

\begin{lem} Let $z:[0,T]\times\R^n\to\R^m$ and $X:[0,T]\times\R^n\to\R^n$ be 
differentiable functions with, in addition, $X(t):\R^n\to\R^n$ being a $\cC^1$ diffeomorphism
for all $t\in\R.$  Then the following relation holds:
$$
\partial_t(J\overline z) =J\,\overline{(\partial_tz+\divx(z u))}.
$$
\end{lem}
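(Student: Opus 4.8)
The plan is to reduce the identity to two classical facts: Jacobi's formula for the time derivative of the Jacobian determinant, and the chain rule relating $\partial_t\overline z$ to the material derivative of $z$. Here $u$ stands for the Eulerian velocity attached to $X$, i.e. the vector field characterised by $\partial_tX(t,y)=u(t,X(t,y))=\overline u(t,y)$; this is just \eqref{def-flow} (or \eqref{def-flow2}) rewritten, and it is under this understanding that $u$ appears in the statement.

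First I would compute $\partial_tJ$. Commuting $\partial_t$ with $D_y$ on $X$ (legitimate under the assumed differentiability, which in all the applications of this paper comes from the flow of a sufficiently smooth field) gives $\partial_t(D_yX)=D_y\overline u$, and the chain rule \eqref{chain} turns this into $\partial_t(D_yX)=(D_xu)\circ X\cdot D_yX$. Plugging $M=D_yX$ into the elementary identity $\partial_t\det M=\tr\bigl(\adj(M)\,\partial_tM\bigr)$ and using $\adj(D_yX)=J\,(D_yX)^{-1}$ (valid since $X(t)$ is a diffeomorphism), one obtains
$$\partial_tJ=J\,\tr\bigl((D_xu)\circ X\bigr)=J\,(\divx u)\circ X=J\,\overline{\divx u}.$$

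Next, for the transported field the chain rule gives $\partial_t\overline z=\overline{\partial_tz}+(D_xz)\circ X\cdot\partial_tX=\overline{\partial_tz}+(D_xz)\circ X\cdot\overline u=\overline{\partial_tz+(u\cdot\nabla_x)z}.$ It then remains to combine the two computations with the Leibniz rule and the componentwise product identity $\divx(zu)=(\divx u)\,z+(u\cdot\nabla_x)z$ (simply the scalar product rule applied to each $z_ju$), together with the fact that transporting along $X$ commutes with products:
$$\partial_t(J\overline z)=(\partial_tJ)\,\overline z+J\,\partial_t\overline z=J\,\overline{(\divx u)\,z}+J\,\overline{\partial_tz+(u\cdot\nabla_x)z}=J\,\overline{\partial_tz+\divx(zu)},$$
which is exactly the desired relation.

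I do not anticipate a real obstacle: the argument is a bookkeeping exercise. The only points that deserve a line of justification are the commutation of $\partial_t$ and $D_y$ on $X$, the correct form of Jacobi's formula (through the adjugate, which avoids ever inverting a merely differentiable matrix), and the vector-valued product rule for $\divx(zu)$ when $z$ is $\R^m$-valued.
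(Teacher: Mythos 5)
Your argument is correct and follows the paper's proof essentially verbatim: Jacobi's formula for $\partial_tJ$, the chain rule to express $\partial_t\overline z$ as the material derivative transported by $X$, and the Leibniz plus product rule bookkeeping to assemble $\divx(zu)$. Your use of the adjugate form $\partial_t\det M=\tr(\adj(M)\,\partial_tM)$ instead of the $\det\cdot\tr(M^{-1}\partial_tM)$ version is a cosmetic variant of the same identity; everything else matches the paper line for line.
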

\begin{proof}
The proof is based on the following \emph{Jacobi formula}:
\begin{equation*}
\frac{d}{dt}\det A= \det A \: \tr \Bigl( A^{-1} \frac{dA}{dt}\Bigr)
\end{equation*}
that holds true whenever $A:[0,T]\to\cM_n(\R)$ is differentiable and $A(t)$ is invertible for all $t\in[0,T].$
\medbreak
Now, applying Jacobi  formula to $A(t)=D_yX(y,t),$ and using Leibniz rule,   we discover that 
\begin{equation*}
\begin{split}
\partial_t(J\overline z) 
&= (\partial_t J)\overline z + J\partial_t\overline z \\
&= J \tr \Bigl((D_yX)^{-1}\frac{dD_yX}{dt}\Bigr)\overline z + J\partial_t\overline z.
\end{split}
\end{equation*}
Since 
\begin{equation*}
\frac{dD_yX}{dt} = D_y \frac{dX}{dt} = D_y\overline u = \overline{D_xu}\cdot D_yX,
\end{equation*}
we thus have 
$$\partial_t(J\overline z) = J \,\overline z \,\overline{\divx u}
   + J(\overline{\partial_tz}+\overline{D_xz}\cdot\overline u), $$
   whence the desired equality.
\end{proof}

Applying the above lemma to $z=\rho,$ $z=\rho u$  or $z=E,$ we thus get 
\begin{equation*}
\begin{split}
&\overline{(\partial_t\rho+\divx(\rho u))} 
 = J^{-1}\partial_t(J\overline\rho), \\
&\overline{\partial_t(\rho u)+\divx(\rho u\otimes u)}
=J^{-1}\partial_t(J\overline\rho\overline u), \\
&\overline{\partial_t(E)+\divx(uE)}
=J^{-1}\partial_t(J\overline E).
\end{split}
\end{equation*}
{}From those three relations, it is now clear that if $(\rho,u,E)$ satisfies 
 \eqref{CNSE} then $(\overline\rho,\overline u,\overline E)$ fulfills \eqref{CNSE-l}.  
 
\begin{rem} 
Integrating against test functions, it is possible to considerably weaken the assumptions on $z.$
\end{rem}

\centerline{Ackowledgement}
The first author is supported by Research Fellowship for Young Scientists
of the Japan Society for the Promotion of Science (JSPS).

\begin{bibdiv}
 \begin{biblist}%[\normalsize]

%\bib{abidi-paicu}{article}{
  % author={H., Abidi\text{,}},
  % author={M., Paicu\text{,}},
   %title={Existence globale pour un fluide inhomog\`ene},
   %journal={Ann. Inst. Fourier},
   %volume={57},
   %date={2007},
   %pages={883--917},
%}

\bib{bahouri-chemin-danchin}{book}{
   author={H., Bahouri\text{,}},
   author={J.-Y., Chemin\text{,}},
   author={R., Danchin\text{,}},
   title={Fourier analysis and nonlinear partial differential equations},
   series={Grundlehren der Mathematischen Wissenschaften},
   volume={343},
   publisher={Springer},
   place={Heidelberg},
   date={2011},
   pages={xvi+523},
%   isbn={978-3-642-16829-1},
%   review={\MR{2768550 (2011m:35004)}},
}

%\bib{bony}{article}{
%   author={J.-M., Bony\text{,}},
%   title={Calcul symbolique et propagation des singularit\'es pour les
%   \'equations aux d\'eriv\'ees partielles non lin\'eaires},
%%   language={French},
%   journal={Ann. Sci. \'Ecole Norm. Sup. (4)},
%   volume={14},
%   date={1981},
%%   number={2},
%   pages={209--246},
%%   issn={0012-9593},
%%   review={\MR{631751 (84h:35177)}},
%}

\bib{chen-miao-zhang1}{article}{
   author={Q., Chen\text{,}},
   author={C, Miao\text{,}},
   author={Z, Zhang\text{,}},
   title={Well-posedness in critical spaces for the compressible
   Navier-Stokes equations with density dependent viscosities},
   journal={Rev. Mat. Iberoam.},
   volume={26},
   date={2010},
   number={3},
   pages={915--946},
%   issn={0213-2230},
%   review={\MR{2789370 (2012c:35341)}},
%   doi={10.4171/RMI/621},
}

\bib{chen-miao-zhang10}{article}{
   author={Q., Chen\text{,}},
   author={C, Miao\text{,}},
   author={Z, Zhang\text{,}},
   title={Global well-posedness for compressible Navier-Stokes equations
   with highly oscillating initial velocity},
   journal={Comm. Pure Appl. Math.},
   volume={63},
   date={2010},
   number={9},
   pages={1173--1224},
%   issn={0010-3640},
%   review={\MR{2675485 (2011e:35045)}},
%   doi={10.1002/cpa.20325},
}

\bib{chen-miao-zhang2}{article}{
   author={Q., Chen\text{,}},
   author={C, Miao\text{,}},
   author={Z, Zhang\text{,}},
   title={On the ill-posedness of the compressible Navier-Stokes 
   equations in the critical Besov spaces},
   journal={arXiv:1109.6092},
%   volume={26},
%   date={2010},
%   number={3},
%   pages={915--946},
%   issn={0213-2230},
%   review={\MR{2789370 (2012c:35341)}},
%   doi={10.4171/RMI/621},
}

\bib{charve-danchin}{article}{
   author={F., Charve\text{,}},
   author={R., Danchin\text{,}},
   title={A global existence result for the compressible Navier-Stokes
   equations in the critical $L\sp p$ framework},
   journal={Arch. Ration. Mech. Anal.},
   volume={198},
   date={2010},
   number={1},
   pages={233--271},
%   issn={0003-9527},
%   review={\MR{2679372 (2011f:35255)}},
%   doi={10.1007/s00205-010-0306-x},
}

\bib{danchin00}{article}{
   author={R., Danchin\text{,}},
   title={Global existence in critical spaces for compressible Navier-Stokes
   equations},
   journal={Invent. Math.},
   volume={141},
   date={2000},
   number={3},
   pages={579--614},
%   issn={0020-9910},
%   review={\MR{1779621 (2003a:76090)}},
%   doi={10.1007/s002220000078},
}

\bib{danchin1}{article}{
   author={R., Danchin\text{,}},
   title={Local theory in critical spaces for compressible viscous and
   heat-conductive gases},
   journal={Comm. Partial Differential Equations},
   volume={26},
   date={2001},
%   number={7-8},
   pages={1183--1233},
%   issn={0360-5302},
%   review={\MR{1855277 (2002g:76091)}},
%   doi={10.1081/PDE-100106132},
}

\bib{D2}{article}{
   author={R., Danchin\text{,}},
title={On the uniqueness in critical spaces for compressible Navier-Stokes equations},
journal={NoDEA Nonlinear Differential Equations Appl.}, 
volume={12},
number={1},
pages={111--128},
date={2005},
}

\bib{danchin07}{article}{
   author={R., Danchin\text{,}},
   title={Well-posedness in critical spaces for barotropic viscous fluids
   with truly not constant density},
   journal={Comm. Partial Differential Equations},
   volume={32},
   date={2007},
%   number={7-9},
   pages={1373--1397},
%   issn={0360-5302},
%   review={\MR{2354497 (2008k:76102)}},
%   doi={10.1080/03605300600910399},
}

\bib{danchin2}{article}{
   author={R., Danchin\text{,}},
   title={A Lagrangian approach for the compressible Navier-Stokes equations},
   journal={To appear in Annales de l'Institut Fourier},
   %volume={65},
   %date={2012},
%   number={1},
   %pages={1458--1480},
%   issn={1021-9722},
%   review={\MR{2138937 (2006c:76096)}},
%   doi={10.1007/s00030-004-2032-2},
}

%%%%%%%%%
%\bibitem{danchin2} R.\,Danchin, Fourier Analysis Methods for PDE's, Lecture Notes, 2005.
%%%%%%%% Does not appear on mathsci-net.

%%%%%%%%%
%\bib{danchin1e}{article}{
%   author={Danchin, Rapha{\"e}l},
%   title={Erratum: ``Local theory in critical spaces for compressible
%   viscous and heat-conductive gases'' [Comm.\ Partial Differential
%   Equations {\bf 26} (2001), no.\ 7-8, 1183--1233; MR1855277
%   (2002g:76091)]},
%   journal={Comm. Partial Differential Equations},
%   volume={27},
%   date={2002},
%%   number={11-12},
%   pages={2531--2532},
%%   issn={0360-5302},
%%   review={\MR{1944040}},
%}
%%%%%%%%%%%

\bib{danchin-mucha}{article}{
   author={R., Danchin\text{,}},
   author={P. B., Mucha\text{,}},
   title={A Lagrangian approach for the incompressible Navier-Stokes
   equations with variable density},
   journal={Comm. Pure Appl. Math.},
  volume={65},
   date={2012},
   number={10},
   pages={1458--1480},
%   issn={0010-3640},
%   review={\MR{2957705}},
%   doi={10.1002/cpa.21409},
}

%%%%%%
%\bibitem{fujita-kato}H.\,Fujita, T.\,Kato, {\sl On the Navier-Stokes initial value problem I}, Arch. Ration. Mech. Anal. {\bf 16} (1964),  269-315.
%%%%%%%%

\bib{fujita-kato}{article}{
   author={H., Fujita\text{,}},
   author={T., Kato\text{,}},
   title={On the Navier-Stokes initial value problem. I},
   journal={Arch. Rational Mech. Anal.},
   volume={16},
   date={1964},
   pages={269--315},
%   issn={0003-9527},
%   review={\MR{0166499 (29 \#3774)}},
}

\bib{haspot}{article}{
   author={B., Haspot\text{,}},
   title={Well-posedness in critical spaces for the system of compressible
   Navier-Stokes in larger spaces},
   journal={J. Differential Equations},
   volume={251},
   date={2011},
%   number={8},
   pages={2262--2295},
%   issn={0022-0396},
%   review={\MR{2823668}},
%   doi={10.1016/j.jde.2011.06.013},
}

%\bib{hoff}{article}{
   %author={D., Hoff\text{,}},
   %title={Compressible flow in a half-space with Navier boundary conditions},
   %journal={J. Math. Fluid Mech.},
   %volume={7},
   %date={2005},
   %number={3},
   %pages={315--338},
%   issn={1422-6928},
%   review={\MR{2166979 (2006h:35213)}},
%   doi={10.1007/s00021-004-0123-9},
%}

%\bib{itaya}{article}{
%   author={N., Itaya\text{,}},
%   title={On the Cauchy problem for the system of fundamental equations
%   describing the movement of compressible viscous fluid},
%   journal={K\=odai Math. Sem. Rep.},
%   volume={23},
%   date={1971},
%   pages={60--120},
%%   issn={0023-2599},
%%   review={\MR{0283426 (44 \#657)}},
%}

\bib{matsumura-nishida}{article}{
   author={A., Matsumura\text{,}},
   author={T., Nishida\text{,}},
   title={The initial value problem for the equations of motion of viscous
   and heat-conductive gases},
   journal={J. Math. Kyoto Univ.},
   volume={20},
   date={1980},
%   number={1},
   pages={67--104},
%   issn={0023-608X},
%   review={\MR{564670 (81g:35108)}},
}

\bib{mucha}{article}{
   author={P. B., Mucha\text{,}},
   title={The Cauchy problem for the compressible Navier-Stokes equations in
   the $L\sb p$-framework},
   journal={Nonlinear Anal.},
   volume={52},
   date={2003},
   number={4},
   pages={1379--1392},
%   issn={0362-546X},
%   review={\MR{1941263 (2003i:35235)}},
%   doi={10.1016/S0362-546X(02)00270-5},
}

\bib{nash}{article}{
   author={J., Nash\text{,}},
   title={Le probl\`eme de Cauchy pour les \'equations diff\'erentielles
   d'un fluide g\'en\'eral},
%   language={French},
   journal={Bull. Soc. Math. France},
   volume={90},
   date={1962},
   pages={487--497},
%   issn={0037-9484},
%   review={\MR{0149094 (26 \#6590)}},
}

\bib{runst-sickel}{book}{
   author={T., Runst\text{,}},
   author={W., Sickel\text{,}},
   title={Sobolev spaces of fractional order, Nemytskij operators, and
   nonlinear partial differential equations},
   series={de Gruyter Series in Nonlinear Analysis and Applications},
   volume={3},
   publisher={Walter de Gruyter \& Co.},
   place={Berlin},
   date={1996},
   pages={x+547},
%   isbn={3-11-015113-8},
%   review={\MR{1419319 (98a:47071)}},
}

\bib{triebel}{book}{
   author={H., Triebel\text{,}},
   title={Theory of function spaces},
   series={Monographs in Mathematics},
   volume={78},
   publisher={Birk\-h\"auser Verlag},
   place={Basel},
   date={1983},
   pages={284},
   isbn={3-7643-1381-1},
%   review={\MR{781540 (86j:46026)}},
%   doi={10.1007/978-3-0346-0416-1},
}

\bib{valli}{article}{
   author={A, Valli\text{,}},
   title={An existence theorem for compressible viscous fluids},
  % language={English, with Italian summary},
   journal={Ann. Mat. Pura Appl. (4)},
   volume={130},
   date={1982},
   pages={197--213},
%   issn={0003-4622},
%   review={\MR{663971 (83h:35112)}},
%   doi={10.1007/BF01761495},
}

\bib{valli-zajaczkowski}{article}{
   author={A., Valli\text{,}},
   author={W. M., Zaj{\polhk{a}}czkowski\text{,}},
   title={Navier-Stokes equations for compressible fluids: global existence
   and qualitative properties of the solutions in the general case},
   journal={Comm. Math. Phys.},
   volume={103},
   date={1986},
   number={2},
   pages={259--296},
%   issn={0010-3616},
%   review={\MR{826865 (87f:35206)}},
}

  \end{biblist}
\end{bibdiv} 
\end{document}